\newtheorem{theorem}{Theorem}[section]
\newtheorem{cor}[theorem]{Corollary}
\newtheorem{lem}[theorem]{Lemma}
\newtheorem{prp}[theorem]{Proposition}
\theoremstyle{definition}
\newtheorem{rem}[theorem]{Remark}
\newtheorem{example}[theorem]{Example}
\numberwithin{equation}{section}
\newcommand{\B}{\beta}
\newcommand{\I}{\mathbb{I}}
\newcommand{\N}{\mathbb{N}}
\newcommand{\R}{\mathbb{R}}
\newcommand{\cP}{\mathcal{P}}
\newcommand{\eqb}{\begin{equation}}
\newcommand{\eqe}{\end{equation}}
\newcommand{\lt}{\left}
\newcommand{\rt}{\right}
\newcommand{\rb}{\begin{rem}}
\newcommand{\re}{\end{rem}}
\numberwithin{equation}{section}
\numberwithin{theorem}{section}
\begin{document}

\title{Best finite approximations of Benford's Law}

\author{Arno Berger and Chuang Xu\\[2mm] Mathematical and Statistical
  Sciences\\University of Alberta\\Edmonton, Alberta, {\sc Canada}}

\maketitle

\begin{abstract}
For arbitrary Borel probability measures with compact
support on the real line, characterizations are established of the best finitely
supported approximations, relative to three familiar probability
metrics (L\'{e}vy, Kantorovich, and Kolmogorov), given any number of
atoms, and allowing for additional constraints regarding weights or
positions of atoms. As an application, best (constrained or
unconstrained) approximations are identified for Benford's Law
(logarithmic distribution of significands) and other familiar
distributions. The results complement and
extend known facts in the literature; they also provide new rigorous
benchmarks against which to evaluate empirical observations regarding
Benford's Law.
\end{abstract}

\noindent
{\bf Keywords.} Benford's Law, best uniform approximation,
asymptotically best approximation, \\
\hspace*{19.0mm} L\'{e}vy distance, Kantorovich distance, Kolmogorov distance.

\noindent
{\bf MSC2010.} 60B10, 60E15, 62E15.

\medskip

\section{Introduction}
\label{intro}

Given real numbers $b>1$ and $x\ne 0$, denote by $S_b(x)$
the unique number in $[1,b[$ such that $|x|= S_b(x) b^k$ for
some (necessarily unique) integer $k$; for convenience, let $S_b(0) =
0$. The number $S_b(x)$ often is referred to as the {\em base}-$b$ {\em
  significand\/} of $x$, a terminology particularly well-established in
the case of $b$ being an integer. (Unlike in much of the literature
\cite{Ben,BH,Hill,Raimi}, the case
of integer $b$ does not carry special significance in this article.)
A Borel probability measure $\mu$ on $\R$ is {\em Benford base\/} $b$,
or $b$-{\em Benford\/} for short, if
\begin{equation}\label{ee1}
\mu \bigl(  \{ x\in \R : S_b(x) \le s\}\bigr) = \frac{\log s}{\log b}  \quad \forall
s \in [1,b[ \, ;
\end{equation}
here and throughout, $\log$ denotes the natural logarithm.
Benford probabilities (or random variables) exhibit many
interesting properties and have been studied extensively \cite{All,Fel,Hi2,Mil,Pin}. They provide
one major pathway into the study of {\em Benford's Law}, an
intriguing, multi-faceted phenomenon that attracts interest from a wide range of disciplines; see, e.g.,
\cite{BH} for an introduction, and \cite{Mil} for a panorama of recent developments.
Specifically, denoting by $\B_b$ the Borel probability measure with
$$
\B_b([1,s]) = \frac{\log s}{\log b} \quad \forall s\in [1,b[ \, ,
$$
note that $\mu$ is $b$-Benford if and only if $\mu \circ S_b^{-1} =
\B_b$.

Historically, the case of {\em decimal\/} (i.e., base-$10$)
significands has been the most prominent, with early empirical studies on
the distribution of decimal significands (or significant digits) going back to
Newcomb \cite{N} and Benford \cite{Ben}. If $\mu$ is 10-Benford, note
that in particular
\begin{equation}\label{ee2}
\mu \bigl(\{x\in \R : \mbox{\rm leading decimal digit of } x
= D \} \bigr) =  \frac{\log (1+D^{-1})}{\log 10 }\quad \! \forall D
=1 , \ldots, 9 \, .
\end{equation}
For theoretical as well as practical reasons, mathematical objects such as random
variables or sequences, but also concrete,
finite numerical data sets that conform, at least approximately, to
(\ref{ee1}) or (\ref{ee2}) have attracted much interest \cite{Dia,Knu,Raimi,Sch}. Time and again,
Benford's Law has emerged as a perplexingly prevalent phenomenon. One
popular approach to understand this prevalence seeks to establish
(mild) conditions on a probability measure that make
(\ref{ee1}) or (\ref{ee2}) hold with good accuracy, perhaps even
exactly \cite{BT,DL,Fel,GD,Pin}. It
is the goal of the present article to provide precise quantitative
information for this approach.

Concretely, notice that while a
finitely supported probability measure, such as, e.g., the empirical measure
associated with a finite data set \cite{BHM}, may conform to the {\em
  first-digit law\/} (\ref{ee2}), it cannot possibly satisfy
(\ref{ee1}) exactly. For such measures, therefore, it is natural to
quantify, as accurately as possible, the failure of equality in (\ref{ee1}), that is, the
discrepancy between $\mu \circ S_b^{-1}$ and $\B_b$. Utilizing three
different familiar metrics $d_*$ on probabilities (L\'evy,
Kantorovich, and Kolmogorov metrics; see Section \ref{sec2} for
details), the article does this in a systematic way: For every
$n\in \N$, the value of $\min_{\nu} d_*(\B_b, \nu)$ is identified,
where $\nu$ is assumed to be supported on no more than $n$
atoms (and may be subject to further restrictions such as,
e.g., having only atoms of equal weight, as in the case of empirical
measures); the minimizers of $d_*(\B_b, \nu)$ are also characterized explicitly.

The scope of the results presented herein, however, extends far beyond
Benford probabilities. In fact, a general theory of best (constrained or unconstrained)
$d_*$-approximations is developed. As far as the authors can tell, no such theories
exist for the L\'evy and Kolmogorov metrics --- unlike in the case of
the Kantorovich metric where it (mostly) suffices to rephrase pertinent known
facts \cite{GL,XB}. Once the general results are established, the desired
quantitative insights for Benford probabilities are but straightforward
corollaries. (Even in the context of Kantorovich distance, the study of
$\B_b$ yields a rare new, explicit example of an {\em optimal
  quantizer\/} \cite{GL}.) In
particular, it turns out that, under all the various constraints
considered here, the limit $Q_{*} = \lim_{n\to \infty} n \min_{\nu}
d_*(\B_b, \nu)$ always exists, is finite and positive, and can be
computed more or less explicitly. This greatly extends earlier results,
notably of \cite{BHM}, and also suggests that
$n^{-1}Q_{*}$ may be an appropriate quantity against which to evaluate
the many heuristic claims of closeness to Benford's
Law for empirical data sets found in
the literature \cite{BHMI,Mil,MT}.

The main results in this article, then, are existence proofs and
characterizations for the minimizers of $d_{*}(\mu, \nu)$ for arbitrary
(compactly supported) probability measures $\mu$, as provided by
Theorems \ref{thm3e}, \ref{prop3f}, \ref{prop4a}, \ref{thloc-K}, and
\ref{thwei-K} (where additional constraints are imposed on the sizes or locations of the atoms
of $\nu$), as well as by Theorems \ref{thm3h} and \ref{bestK} (where
such constraints are absent). As suggested by the title, this work
aims primarily at a precise analysis of conformance to Benford's Law (or
the lack thereof). Correspondingly, much attention is paid to the
special case of $\mu = \B_b$, leading to explicit descriptions of
best (constrained or unconstrained) approximations of the latter
(Corollaries \ref{coL}, \ref{cor4c}, and \ref{coKS}) and the exact
asymptotics of $d_*(\B_b, \nu)$. As indicated earlier, however, the
main results are much more general. To emphasize this fact, two other
simple but illustrative examples of $\mu$ are repeatedly considered as well
(though in less detail than $\B_b$), namely the familiar $\mbox{\tt Beta}(2,1)$
distribution and the (perhaps less familiar) inverse Cantor
distribution. It turns out that while the former is absolutely
continuous (w.r.t.\ Lebesgue measure) and its best approximations
behave like those of $\B_b$ in most respects (Examples \ref{ex37a}, \ref{ex310a},
\ref{ex46a}, and \ref{ex58a}), the latter is
discrete and the behaviour of its best approximations is more delicate
(Examples \ref{ex37b}, \ref{ex310b}, \ref{ex47b}, and \ref{ex58b}). Even with only a few details mentioned, these
examples will help the reader appreciate the versatility of the main results.

The organization of this article is as follows: Section \ref{sec2} reviews
relevant basic
properties of one-dimensional probabilities and the three main
probability metrics used throughout. Each of the Sections \ref{secL}
to \ref{secK} then is devoted specifically to one single metric. In
each section, the problem of best (constrained or
unconstrained) approximation by finitely supported probability
measures is first addressed in complete generality, and then the
results are specialized to $\B_b$ as well as other concrete examples. Section \ref{secC} summarizes and
discusses the quantitative results obtained, and also mentions a few
natural questions for subsequent studies.

\section{Probability metrics}\label{sec2}

Throughout, let $\I\subset \R$ be a compact interval with Lebesgue
measure $\lambda (\I)>0$, and $\cP$ the set of all Borel probability measures on
$\I$. Associate with every $\mu\in \cP$ its {\em distribution
  function\/} $F_{\mu}: \R \to \R,$ given by
$$
F_{\mu}(x) = \mu (\{ y\in \I : y \le x\}) \quad \forall\ x \in \R \, ,
$$
as well as its (upper) {\em quantile function\/}
$F_{\mu}^{-1}:\,
[0,1[ \, \to \R,$ given by
\eqb\label{00}
F_{\mu}^{-1} (x) = \left\{
\begin{array}{lcl}
\min \I & \: & \mbox{\rm if } 0\le x<\mu(\{\min\I\})\, ,\\[1mm]
\sup \{y\in \I : F_{\mu}(y) \le x\} & \: & \mbox{\rm if }\mu(\{\min\I\})\le
x<1 \, .
\end{array}
\right.
\eqe
Note that $F_{\mu}$ and $F_{\mu}^{-1}\!$ both are
non-decreasing, right-continuous, and bounded. The {\em support\/} of $\mu$, denoted
$\mbox{\rm supp}\, \mu$, is the smallest closed subset of $\I$ with $\mu$-measure
$1$. Endowed with the weak topology, the space $\cP$ is compact
and metrizable.

Three important different metrics on $\cP$ are
discussed in detail in this article; for a panorama of other metrics
the reader is referred, e.g., to \cite{GS,RKSF} and the references therein. Given
probabilities $\mu, \nu\in \cP$, their {\em L\'evy distance\/} is
\begin{equation}\label{eq1}
d_{\sf L} (\mu, \nu) = \omega \inf \lt\{ y\ge0: F_{\mu} (\cdot-y) -
y \le F_{\nu} \le F_{\mu} (\cdot+y) + y \rt\}, \,
\end{equation} with $\omega=\max\{1,\lambda(\I)\}/\lambda(\I);$
their $L^r$-{\em Kantorovich\/} (or {\em transport}) {\em distance\/},
with $r\ge 1$, is
\begin{equation}\label{eq2}
d_r (\mu, \nu) = \lambda (\I)^{-1} \left(\int_0^1 \lt|F_{\mu}^{-1}(y) -
F_{\nu}^{-1}(y)\rt|^r {\rm d}y\right)^{1/r} \!\!\! = \lambda (\I)^{-1}
\|F_{\mu}^{-1} - F_{\nu}^{-1}\|_r ;
\end{equation}
and their {\em Kolmogorov\/} (or {\em uniform}) {\em distance\/} is
$$
d_{\sf K} (\mu, \nu) = \sup\nolimits_{x\in \R} \lt|F_{\mu} (x) -
F_{\nu} (x)\rt| = \| F_{\mu} - F_{\nu} \|_{\infty} \, .
$$
Henceforth, the symbol $d_*$ summarily refers to any of
$d_{\sf L}, d_r$, and $d_{\sf K}$. The (unusual) normalizing factors in (\ref{eq1}) and (\ref{eq2}) guarantee that all three
metrics are comparable numerically in that $\sup_{\mu, \nu\in \cP}
d_*(\mu, \nu) =1$ in either case. Note that
$$
d_1(\mu, \nu) = \lambda (\I)^{-1} \int_{\I} \lt|F_{\mu} (x) - F_{\nu} (x)\rt|\, {\rm d}x \quad
\forall\ \mu, \nu \in \cP \, ,
$$
by virtue of Fubini's Theorem. The metrics $d_{\sf L}$ and $d_r$ are equivalent:
They both metrize the weak topology on $\cP$, and hence are separable
and complete. By contrast, the complete metric $d_{\sf K}$ induces a finer topology and is
non-separable. However, when restricted to $\cP_{\sf cts} :=\{\mu \in
\cP : \mu (\{x\}) = 0 \,\enspace \forall\ x \in \I\}$, a dense $G_{\delta}$-set in $\cP$, the metric $d_{\sf K}$ does metrize the weak topology on
$\cP_{\sf cts}$ and is separable. The values of $d_{\sf L}, d_r,$ and $d_{\sf K}$ are not
completely unrelated since, as is easily checked,
\begin{equation}\label{eq3}
d_1 \le\frac{1+\lambda (\I)}{\omega \lambda(\I)} d_{\sf L} \, , \quad
d_r  \le d_s \enspace (\mbox{\rm if } r\le s) \, , \quad  d_1 \le  d_{\sf K} \, , \quad
d_{\sf L} \le \omega d_{\sf K}  \, ,
\end{equation}
and all bounds in (\ref{eq3}) are best possible. Beyond (\ref{eq3}), however, no relative bounds exist between $d_{\sf
  L}, d_r$, and $d_{\sf K}$ in general: If $\ast \ne 1$, $\ast\ne
\circ $, and $(\ast, \circ ) \not \in \{  ({\sf L}, {\sf K}), (r,s)
\}$ with $r\le s$ then
$$
\sup\nolimits_{\mu, \nu\in \cP : \mu \ne \nu} \frac{d_{\ast} (\mu,
  \nu)}{d_{\circ} (\mu, \nu)} = +\infty \, .
$$
Each metric
$d_{\ast}$, therefore, captures a different aspect of $\cP$ and
deserves to be studied independently. To illustrate this further, let
$\I = [0,1]$, $\mu=\delta_0 \in \cP$, and
$\mu_k=\lt(1-k^{-1}\rt)\delta_0+k^{-1}\delta_{k^{-2}}$ for $k\in\N$;
here and throughout, $\delta_a$ denotes the Dirac (probability)
measure concentrated at $a\in\mathbb{R}.$ Then $\lim_{k\to \infty} d_{\ast} (\mu, \mu_k) = 0$, but the
rate of convergence differs between metrics:
$$
d_{\sf L} (\mu, \mu_k) = k^{-2} \, , \quad
d_r (\mu, \mu_k) =k^{-2-1/r}\, , \quad
d_{\sf K} (\mu, \mu_k) = k^{-1} \, \quad \forall\ k\in\N.
$$

The goal of this article is first to identify, for each metric $d_*$
introduced earlier, the best finitely supported $d_*$-approximation(s)
of any given $\mu \in \cP$. The general results are then applied
to Benford's Law, as well as other concrete examples. Specifically, if
$\mu=\B_b$ for some $b>1$ then it is automatically assumed that $\I=[1,b].$ The following unified notation and
terminology is used throughout: For every $n\in \N$, let $\Xi_n=\{x\in\I^n: x_{,1}\le\ldots\le x_{,n}\}$,
$\Pi_n=\{p\in\R^n: p_{,j}\ge0,\ \sum_{j=1}^np_{,j}=1\}$, and for
each $x\in \Xi_n$ and $p\in \Pi_n$ define
$\delta_x^p=\sum_{j=1}^np_{,j}\delta_{x_{,j}}$. For convenience,
$x_{,0}:=-\infty$ and $x_{,n+1}:=+\infty$ for every $x\in\Xi_n,$ as well as $P_{,i}=\sum_{j=1}^ip_{,j}$ for $i=0,\ldots,n$ and $p\in\Pi_n$; note that $P_{,0}=0$ and $P_{,n}=1.$
Henceforth, usage of the symbol $\delta_x^p$ tacitly assumes that $x\in\Xi_n$ and $p\in\Pi_n,$
for some $n\in\mathbb{N}$ either specified explicitly or else clear from the context.
Call $\delta_x^p$ a {\em best
  $d_*$-approximation of $\mu\in \cP$, given $x\in \Xi_n$}  if
$$
d_*\lt(\mu,\delta_x^p\rt)\le d_*\lt(\mu,\delta_x^q\rt)\quad  \forall\
q\in\Pi_n\, .
$$
Similarly, call $\delta_x^p$ a  {\em best
  $d_*$-approximation of $\mu$, given $p\in \Pi_n$} if
$$
d_*\lt(\mu,\delta_x^p\rt)\le d_*\lt(\mu,\delta_y^p\rt)\quad  \forall\
y\in\Xi_n\, .
$$
Denote by $\delta_x^{\bullet}$ and $\delta_{\bullet}^p$ any best
$d_*$-approximation of $\mu$, given $x$ and $p$, respectively. Best
$d_*$-approximations, given $p=u_n = (n^{-1}, \ldots , n^{-1})$ are referred to
as best {\em uniform\/} $d_*$-approxima\-ti\-ons, and denoted
$\delta_{\bullet}^{u_n}$. Finally, $\delta_x^p$ is a {\em best
  $d_*$-approximation of\/} $\mu \in \cP$, denoted
$\delta_{\bullet}^{\bullet, n}$, if
$$
d_*\lt(\mu,\delta_x^p\rt)\le d_* \lt(\mu,\delta_y^q\rt)\quad  \forall\
y\in \Xi_n, q\in\Pi_n\, .
$$ Notice that usage of the symbols $\delta_x^{\bullet},$
$\delta^p_{\bullet},$ and $\delta_{\bullet}^{\bullet,n}$ always refers
to a specific metric $d_*$ and probability measure $\mu\in\cP$, both usually clear from the context.

Information theory sometimes refers to $d_{\ast}
\lt(\mu,\delta_{\bullet}^{\bullet, n}\rt)$ as the {\em $n$-th quantization
error}, and to $\lim_{n\to\infty} n d_*\lt(\mu,\delta_{\bullet}^{\bullet, n}
\rt)$, if it exists, as the {\em quantization coefficient\/}  of $\mu$; see, e.g., \cite{GL}. By analogy,
$d_{\ast} (\mu,\delta_{\bullet}^{u_n})$ and $\lim_{n\to\infty} n
d_{\ast} (\mu,\delta_{\bullet}^{u_n})$, if it exists, may be called the {\em $n$-th 
uniform quantization error\/} and the {\em uniform quantization
coefficient}, respectively.

\section{L\'evy approximations}\label{secL}

This section identifies best finitely supported $d_{\sf
  L}$-approximations (constrained or unconstrained) of a given
$\mu\in\cP$. To do this in a transparent way, it is helpful to first 
consider more generally a few elementary properties of non-decreasing
functions. These properties are subsequently specialized to either $F_{\mu}$ or $F_{\mu}^{-1}$.

Throughout, let $f:\R \to \overline{\R}$ be non-decreasing,
and define $f(\pm \infty) = \lim_{x\to \pm \infty} f(x)\in \overline{\R}$, where
$\overline{\R} = \R \cup \{ -\infty, + \infty\}$ denotes the extended
real line with the usual order and topology. Associate with $f$ two
non-decreasing functions $f_{\pm}: \R \to \overline{\R}$, defined as
$f_{\pm} (x) = \lim_{\varepsilon \downarrow 0} f(x \pm
\varepsilon)$. Clearly, $f_-$ is left-continuous whereas $f_+$ is
right-continuous, with $f_{\pm}(-\infty) = f(-\infty)$,
$f_{\pm}(+\infty) = f(+\infty)$, as well as $f_- \le f \le f_+$, and
$f_+(x) \le f_- (y)$ whenever $x<y$; in particular, $f_-(x) =
f_+ (x)$ if and only if $f$ is continuous at $x$. The (upper) {\em
  inverse function\/} $f^{-1}: \R \to \overline{\R}$ is given by
$$
f^{-1} (t) = \sup \{ x\in \R : f(x) \le t\} \quad \forall\ t \in \R \, ;
$$
by convention, $\sup \varnothing := -\infty$ (and $\inf
\varnothing:= +\infty$). Note that (\ref{00}) is consistent with this
notation. For what follows, it is useful to recall a few basic
properties of inverse functions; see, e.g., \cite[Sec.3]{XB} for details.

\begin{prp}\label{prop3a}
Let $f:\R \to \overline{\R}$ be non-decreasing. Then $f^{-1}$ is
non-decreas\-ing and right-continuous. Also, $(f_{\pm})^{-1} = f^{-1}$,
and $(f^{-1})^{-1} = f_+$.
\end{prp}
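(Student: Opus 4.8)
The plan is to verify the four assertions in sequence, each time reasoning directly from the definition $f^{-1}(t) = \sup\{x \in \R : f(x) \le t\}$ and the order/topology on $\overline{\R}$. First, \emph{monotonicity of $f^{-1}$}: if $s \le t$ then $\{x : f(x) \le s\} \subseteq \{x : f(x) \le t\}$, so taking suprema gives $f^{-1}(s) \le f^{-1}(t)$ immediately. Next, \emph{right-continuity of $f^{-1}$}: fix $t$ and let $t_k \downarrow t$; since $f^{-1}$ is non-decreasing, $\lim_k f^{-1}(t_k) = \inf_k f^{-1}(t_k) \ge f^{-1}(t)$, so it suffices to show the reverse inequality. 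If $x < f^{-1}(t_k)$ for every $k$, one must produce a point witnessing $f^{-1}(t) \ge x$; the key observation is that $f^{-1}(t_k) > x$ means there exists $x' > x$ (or $x' \ge $ something near $f^{-1}(t_k)$) with $f(x') \le t_k$, and by pushing $x'$ down toward $x$ and using monotonicity of $f$ together with $t_k \downarrow t$, one extracts a point $\le x$... actually cleaner: it suffices to show that for any $x$ with $f(x) \le t$ is not the right framing either. The robust argument: $f^{-1}(t) \ge x$ holds iff for every $x'' < x$ one has $f(x'') \le t$ — no. I will instead use the clean characterization $f^{-1}(t) \ge x \iff f(x') \le t$ for all $x' < x$ (equivalently $f_-(x) \le t$), which follows directly from the definition of $\sup$; then right-continuity of $f^{-1}$ reduces to: if $f_-(x) \le t_k$ for all $k$ and $t_k \downarrow t$, then $f_-(x) \le t$, which is trivial.

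That same characterization, $f^{-1}(t) \ge x \iff f_-(x) \le t$, does most of the remaining work. For \emph{$(f_\pm)^{-1} = f^{-1}$}: the functions $f$, $f_-$, $f_+$ all have the same left-limit function (namely $(f_-)_- = (f_+)_- = f_-$, since $f_-$ differs from $f$ only at countably many jump points and $f_+$ is the right-continuous modification), so applying the characterization to each of $f$, $f_-$, $f_+$ yields the same inequality $f_-(x) \le t$ and hence the same inverse. I would spell out that $\{x : f(x) \le t\}$ and $\{x : f_+(x) \le t\}$ and $\{x : f_-(x) \le t\}$ all have the same supremum: they are intervals (rays) possibly differing only at a single endpoint, which does not affect the sup. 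For \emph{$(f^{-1})^{-1} = f_+$}: apply the definition of the inverse to the non-decreasing function $f^{-1}$, giving $(f^{-1})^{-1}(x) = \sup\{t : f^{-1}(t) \le x\}$; then use the characterization in the form $f^{-1}(t) \le x \iff t \ge f_+(x)$ — wait, one needs $f^{-1}(t) \le x$, which is the negation of $f^{-1}(t) > x$, i.e. the negation of "$f^{-1}(t) \ge x'$ for some $x' > x$", i.e. the negation of "$f_-(x') \le t$ for some $x' > x$", i.e. "$f_-(x') > t$ for all $x' > x$", i.e. $f_+(x) \ge t$. Hence $\{t : f^{-1}(t) \le x\} = \{t : t \le f_+(x)\}$ and its supremum is $f_+(x)$, as claimed. (Edge cases where $f_+(x) = \pm\infty$ should be checked: then the relevant set is empty or all of $\R$, and the convention $\sup\varnothing = -\infty$ makes it work.)

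The main obstacle is purely bookkeeping: the characterization $f^{-1}(t) \ge x \iff f_-(x) \le t$ must be stated and proved carefully once, paying attention to strict versus non-strict inequalities and to the extended-real endpoints $\pm\infty$ where $f$ may take infinite values. Once that lemma-within-the-proof is in hand, all four conclusions are short formal consequences, and no genuine analytic difficulty remains — this is why the excerpt can afford to cite \cite[Sec.~3]{XB} for details. I would therefore structure the write-up as: (1) establish the characterization and its dual form $f^{-1}(t) \le x \iff f_+(x) \ge t$; (2) deduce monotonicity and right-continuity of $f^{-1}$; (3) deduce $(f_\pm)^{-1} = f^{-1}$ by noting the three sublevel rays share a supremum; (4) deduce $(f^{-1})^{-1} = f_+$ by the dual characterization applied to $f^{-1}$.
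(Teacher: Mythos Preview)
The paper does not actually prove this proposition; it merely states it and refers to \cite[Sec.~3]{XB} for details. So there is no in-paper argument to compare yours against. Your plan---to organize everything around the single characterization $f^{-1}(t)\ge x \Longleftrightarrow f_-(x)\le t$ and then read off all four claims---is the standard route and is essentially correct.

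One step needs tightening. In deriving $(f^{-1})^{-1}=f_+$ you assert that ``$f_-(x')>t$ for all $x'>x$'' is \emph{equivalent} to $f_+(x)\ge t$, and hence that $\{t:f^{-1}(t)\le x\}=\{t:t\le f_+(x)\}$. Only the forward implication holds in general. For a counterexample take $f(y)=0$ for $y\le 1$ and $f(y)=1$ for $y>1$, with $x=0$ and $t=0$: then $f_+(0)=0\ge t$, yet $f_-(1/2)=0\not>t$, and indeed $f^{-1}(0)=1\not\le 0$. What you do have are the two one-sided implications
\[
f^{-1}(t)\le x \;\Longrightarrow\; f_+(x)\ge t
\qquad\text{and}\qquad
t<f_+(x)\;\Longrightarrow\; f^{-1}(t)\le x
\]
(the first is your argument; the second follows since $f_-(x')\ge f_+(x)>t$ for every $x'>x$). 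Together these give
\[
\bigl]-\infty,f_+(x)\bigr[\;\subseteq\;\{t:f^{-1}(t)\le x\}\;\subseteq\;\bigl]-\infty,f_+(x)\bigr]\, ,
\]
which already yields $\sup\{t:f^{-1}(t)\le x\}=f_+(x)$. So your conclusion survives; just phrase the intermediate step as a sandwich of intervals rather than a set equality. With that adjustment (and the routine verification that $(f_-)_-=(f_+)_-=f_-$, needed for your step~(3)), the outline goes through cleanly.
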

Given two non-decreasing functions $f,g: \R \to \overline{\R}$, by a slight abuse of notation, and inspired by (\ref{eq1}), let
$$
d_{\sf L} (f,g) = \inf \{ y \ge 0 : f(\, \cdot \, - y) - y \le g \le
f(\, \cdot \, + y) + y\}\in[0,+\infty] \, .
$$
For instance, $d_{\sf L} (\mu,\nu) =
\omega d_{\sf L} (F_{\mu}, F_{\nu})$ for all $\mu,\nu\in \cP$. It is readily checked that
$d_{\sf L}$ is symmetric, satisfies the triangle inequality, and $d_{\sf
  L}(f,g)>0$ unless $f_- = g_-$, or equivalently, $f_+ =
g_+$. Crucially, the quantity $d_{\sf L}$ is invariant under
inversion.

\begin{prp}\label{prop3b}
Let $f,g:\R \to \overline{\R}$ be non-decreasing. Then $d_{\sf
  L} (f^{-1}, g^{-1}) = d_{\sf L} (f,g)$.
\end{prp}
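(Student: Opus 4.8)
The plan is to unwind the definition of $d_{\sf L}$ in terms of horizontal/vertical shifts and exploit the duality between a non-decreasing function and its inverse. First I would record the elementary but decisive observation that the two-sided inequality in the definition of $d_{\sf L}(f,g)$ can be rephrased symmetrically: for $y\ge 0$, the condition ``$f(\,\cdot\,-y)-y\le g\le f(\,\cdot\,+y)+y$'' is equivalent to ``the graphs of $f$ and $g$ lie within $L^\infty$-distance $y$ of each other in the $\ell^\infty$-sense on $\R^2$,'' i.e.\ every point on the completed graph of $g$ is within the closed $y$-square of some point on the completed graph of $f$, and vice versa. Making this precise requires passing to the closures of the graphs (equivalently, working with $f_-$ and $f_+$, or with the ``filled-in'' graph that includes vertical segments at jumps), which is exactly the reason the definition is insensitive to the one-sided modifications $f_\pm$ noted just before the statement.

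Once the distance is expressed as a Hausdorff-type distance between the filled graphs $\Gamma(f)$ and $\Gamma(g)$ in $\R^2$ under the $\ell^\infty$ metric, the result becomes almost immediate: passing from $f$ to $f^{-1}$ corresponds, at the level of filled graphs, to reflecting across the diagonal $\{(s,t):t=s\}$, i.e.\ $\Gamma(f^{-1})$ is the image of $\Gamma(f)$ under $(x,t)\mapsto(t,x)$. This is where Proposition \ref{prop3a} enters: the identities $(f_\pm)^{-1}=f^{-1}$ and $(f^{-1})^{-1}=f_+$ guarantee that the filled graph of $f^{-1}$ really is the reflected filled graph of $f$ (no information is lost or created at jumps/flat pieces), and that the construction is involutive up to the harmless replacement of $f$ by $f_+$. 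Since the reflection $(x,t)\mapsto(t,x)$ is an isometry of $(\R^2,\ell^\infty)$ — it merely swaps the two coordinates — it preserves the Hausdorff distance, hence $d_{\sf L}(f^{-1},g^{-1})=d_{\sf L}(f,g)$.

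The main obstacle, and the step I would spend the most care on, is the first one: verifying rigorously that $d_{\sf L}(f,g)$, as defined by the shift inequalities, coincides with the $\ell^\infty$-Hausdorff distance between the \emph{filled} graphs, including the correct treatment of the endpoints $\pm\infty$ and of infinite values of $f$ or $g$. The inequality $f(\,\cdot\,-y)-y\le g$ says that the graph of $g$ lies on or below the graph of $f$ shifted right-and-up by $(y,y)$, which is a statement about one coordinate direction only; one must check that combining this with the companion inequality $g\le f(\,\cdot\,+y)+y$ and with monotonicity really does force mutual containment of the filled graphs in $y$-squares, rather than something weaker. Concretely I would show: (i) if $f(\,\cdot\,-y)-y\le g\le f(\,\cdot\,+y)+y$ then for every $(a,b)\in\Gamma(g)$ there is $(a',b')\in\Gamma(f)$ with $|a-a'|\le y$ and $|b-b'|\le y$ (using that at $x=a$ the value $g(a)$ — or any value in the vertical jump segment — is sandwiched, and reading off the corresponding $x$-coordinate on $\Gamma(f)$ from the two inequalities); and (ii) conversely, if the filled graphs are within Hausdorff distance $y$ then both shift inequalities hold for every $y'>y$, hence for the infimum. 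Granting this equivalence — and the analogous one with the roles of $f,g$ and of the two coordinate axes interchanged, which is what makes the picture symmetric — the reflection argument closes the proof. As a sanity check one can verify the formula on a simple example, e.g.\ $f=\mathbbm{1}_{[0,\infty)}$ and $g=\mathbbm{1}_{[c,\infty)}$ with $c>0$, where both sides equal $\min\{1,c\}$.
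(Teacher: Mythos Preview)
The paper does not prove Proposition \ref{prop3b}; it is stated without argument, in the same spirit as Proposition \ref{prop3a} (for which the reader is referred to \cite{XB}). So there is no proof in the paper itself to compare against.

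Your plan is sound. Interpreting $d_{\sf L}(f,g)$ as the $\ell^\infty$-Hausdorff distance between the filled graphs of $f$ and $g$ is the classical geometric reading of the L\'evy distance, and once that equivalence is in hand the reflection $(x,t)\mapsto(t,x)$ --- an $\ell^\infty$-isometry --- finishes the argument. You have correctly identified the one genuine piece of work (the equivalence itself, with care at jumps and at infinite values), and your outline of (i) and (ii) is on the right track.

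That said, there is a more direct route that sidesteps the Hausdorff machinery entirely. It suffices to prove $d_{\sf L}(f^{-1},g^{-1})\le d_{\sf L}(f,g)$, since applying this to $f^{-1},g^{-1}$ and invoking $(f^{-1})^{-1}=f_+$ from Proposition \ref{prop3a}, together with the observation (noted just before the proposition) that $d_{\sf L}$ is unchanged under passage from $f,g$ to $f_+,g_+$, yields the reverse inequality. For the forward direction: from $g\le f(\cdot+y)+y$ one gets $\{x:f(x+y)+y\le t\}\subset\{x:g(x)\le t\}$ for every $t$, and taking suprema gives $f^{-1}(t-y)-y\le g^{-1}(t)$; the hypothesis $f(\cdot-y)-y\le g$ rewrites as $f\le g(\cdot+y)+y$, and the same manipulation with the roles of $f$ and $g$ swapped gives $g^{-1}(t)\le f^{-1}(t+y)+y$. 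This three-line argument uses only the definition of $f^{-1}$, not the full graph picture, and is presumably closer to what the authors had in mind.
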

Thus, for instance, $d_{\sf
  L} (\mu, \nu) = \omega d_{\sf L} (F_{\mu}^{-1}, F_{\nu}^{-1})$  for all
$\mu, \nu\in \cP$. In general, the value of $d_{\sf L}(f,g)$ may equal
$+\infty$. However, if the set $\{f\neq g\}:=\{x\in\R: f(x)\neq
g(x)\}$ is bounded then $d_{\sf L}(f,g)<+\infty.$ Specifically, notice 
that $\lt\{ F_{\mu}\neq F_{\nu}\rt\}\subset\I$ and $\{F_{\mu}^{-1}\neq F_{\nu}^{-1}\}\subset[0,1[$
both are bounded for all $\mu,\nu\in\cP$.

Given a non-decreasing function $f:\R \to \overline{\R}$, let
$I\subset \overline{\R}$ be any interval with the property that
\begin{equation}\label{eq3100}
f_- (\sup I), -f_+ (\inf I) < + \infty \, ,
\end{equation}
and define an auxiliary function $\ell_{f,I}:\R \to \R$ as
$$
\ell_{f,I} (x) = \inf \{ y\ge 0 : f_- (\sup I - y) - y \le x \le f_+
(\inf I + y) + y\} \, .
$$
Note that for each $x\in \R$, the set on the right equals
$[a,+\infty[$ with the appropriate $a \ge 0$, and hence simply
$\ell_{f,I} (x) = a$. Clearly, $\ell_{f,J} \le \ell_{f,I}$
whenever $J\subset I$. Also, for every $a \in \R$,
the function $\ell_{f,\{a \}}$ is non-increasing on $]-\infty, f_-(a)]$,
vanishes on $[f_-(a), f_+(a)]$, and is non-decreasing on $[f_+(a),
+\infty[$. A few elementary properties of $\ell_{f,I}$ are
straightforward to check; they are used below to establish the main
results of this section.

\begin{prp}\label{prop3c}
Let $f:\R \to \overline{\R}$ be non-decreasing, and $I\subset \overline{\R}$ an interval satisfying {\rm (\ref{eq3100})}. Then
$\ell_{f,I}$ is Lipschitz continuous, and
$$
0 \le \ell_{f,I} (x) \le |x|  + \max \{0, f_-(\sup I), - f_+ (\inf I)\}
\quad \forall\ x \in \R \, .
$$
Moreover, $\ell_{f,I}$ attains a minimal value
$$
\ell_{f,I}^* := \min \nolimits_{x\in \R} \ell_{f,I} (x) = \min \{ y\ge
0: f_-(\sup I - y) - y \le f_+(\inf I + y) + y\} \: \ge \: 0 
$$
which is positive unless $f_-(\sup I) \le f_+ (\inf I)$.
\end{prp}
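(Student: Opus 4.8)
The plan is to analyze $\ell_{f,I}$ directly from its definition, exploiting the geometry noted just before the statement: for fixed $x$, the set $\{y\ge 0 : f_-(\sup I - y) - y \le x \le f_+(\inf I + y) + y\}$ is of the form $[a,+\infty[$. First I would establish the two-sided bound. The lower bound $\ell_{f,I}(x)\ge 0$ is immediate since the infimum is taken over $y\ge 0$. For the upper bound, I would exhibit an explicit admissible $y$: take $y_0 = |x| + \max\{0, f_-(\sup I), -f_+(\inf I)\}$, which is finite by hypothesis (\ref{eq3100}). One checks that $f_-(\sup I - y_0) - y_0 \le f_-(\sup I) - y_0 \le \max\{0,f_-(\sup I)\} - y_0 \le -|x| \le x$ using monotonicity of $f_-$ and the definition of $y_0$; symmetrically $x \le |x| \le f_+(\inf I) + y_0$ via $-f_+(\inf I) \le \max\{0,-f_+(\inf I)\}$. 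Hence $y_0$ lies in the admissible set, so $\ell_{f,I}(x)\le y_0$, which is the claimed inequality.

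Next, Lipschitz continuity with constant $1$. The cleanest route is to show $|\ell_{f,I}(x_1) - \ell_{f,I}(x_2)| \le |x_1 - x_2|$ by a translation argument on the admissible sets. Suppose $y$ is admissible for $x_1$, i.e.\ $f_-(\sup I - y) - y \le x_1 \le f_+(\inf I + y) + y$; then for $x_2$ I would try $y' = y + |x_1 - x_2|$, which is $\ge 0$, and verify $f_-(\sup I - y') - y' \le f_-(\sup I - y) - y - |x_1-x_2| \le x_1 - |x_1-x_2| \le x_2$ (using that $f_-$ is non-decreasing so decreasing its argument does not increase it, and that subtracting $|x_1-x_2|$ from both sides is safe), and symmetrically on the right. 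Hence $\ell_{f,I}(x_2) \le \ell_{f,I}(x_1) + |x_1-x_2|$; swapping roles gives the reverse, so $\ell_{f,I}$ is $1$-Lipschitz, in particular continuous.

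It remains to identify $\ell_{f,I}^* = \min_{x\in\R}\ell_{f,I}(x)$ and to establish the stated formula. That the minimum is attained follows from continuity together with the coercivity implied by the upper bound: $\ell_{f,I}(x)\to+\infty$ as $|x|\to\infty$, so the infimum over $\R$ is achieved on a compact set. For the explicit formula, I would argue that $\min_x \ell_{f,I}(x)$ equals the smallest $y\ge 0$ for which there exists \emph{some} $x$ with $f_-(\sup I - y) - y \le x \le f_+(\inf I + y) + y$, and such an $x$ exists if and only if the interval is nonempty, i.e.\ $f_-(\sup I - y) - y \le f_+(\inf I + y) + y$. So $\ell_{f,I}^* = \min\{y\ge 0 : f_-(\sup I - y) - y \le f_+(\inf I + y) + y\}$; the minimum here is attained because the defining inequality is preserved under increasing $y$ (the left side is non-increasing in $y$, the right side non-decreasing) and holds for $y$ large by the bound already proved, while the left side is left-continuous and the right side right-continuous in $y$, so the infimum is a minimum. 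Finally, $\ell_{f,I}^* = 0$ precisely when $y=0$ is admissible, i.e.\ $f_-(\sup I) \le f_+(\inf I)$; otherwise $\ell_{f,I}^* > 0$.

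The main obstacle I anticipate is purely bookkeeping: keeping straight the one-sided limits $f_-, f_+$ and their monotonicity when translating arguments, and confirming that the admissible-set-is-a-ray claim (asserted in the text) legitimately lets one replace "$\inf$ over $y$" by "smallest admissible $y$" and, in the formula for $\ell_{f,I}^*$, swap the order of the quantifiers over $x$ and $y$. None of this is deep, but the signs and the $\max\{0,\cdot,\cdot\}$ term must be handled carefully.
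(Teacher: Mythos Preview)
The paper does not supply a proof of this proposition --- it is introduced as ``straightforward to check'' --- so there is nothing to compare your approach against at the level of strategy. Your outline is essentially the natural one, and the arguments for the two-sided bound and for $1$-Lipschitz continuity are correct.

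There is, however, one genuine error. You claim that ``the minimum is attained \ldots\ from continuity together with the coercivity implied by the upper bound: $\ell_{f,I}(x)\to+\infty$ as $|x|\to\infty$.'' An \emph{upper} bound cannot yield coercivity, and in fact $\ell_{f,I}$ need not be coercive under hypothesis~(\ref{eq3100}). For instance, take $f(x)=0$ for $x\le 0$ and $f(x)=+\infty$ for $x>0$, with $I=[-1,0]$; then $f_-(\sup I)=0$ and $-f_+(\inf I)=0$, so (\ref{eq3100}) holds, yet $\ell_{f,I}(x)=1$ for every $x>1$.

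Fortunately, your subsequent paragraph already contains the correct argument and renders the coercivity step unnecessary: once you show that
\[
\inf_{x\in\R}\ell_{f,I}(x)=\min\bigl\{y\ge 0: f_-(\sup I - y)-y\le f_+(\inf I + y)+y\bigr\}
\]
via the quantifier swap, the very same computation exhibits an $x$ achieving the value on the right (any $x$ in the nonempty interval $[f_-(\sup I - y)-y,\ f_+(\inf I + y)+y]$ at the minimizing $y$). So simply delete the coercivity sentence and let the formula do the work. One further minor slip: you say ``the left side is left-continuous \ldots\ in $y$,'' but in fact $y\mapsto f_-(\sup I - y)$ is \emph{right}-continuous (as $y\downarrow y_0$, the argument $\sup I - y\uparrow \sup I - y_0$, and $f_-$ is left-continuous). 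Since the right side $y\mapsto f_+(\inf I + y)+y$ is right-continuous as well, taking a sequence $y_k\downarrow y^*$ in the admissible set shows that the infimum $y^*$ itself is admissible, which is what you need.
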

For $\mu\in \cP$, note that (\ref{eq3100}) automatically holds
if $f= F_{\mu}$, or if $f=F_{\mu}^{-1}$ and $I\subset [0,1]$. In these cases, therefore, $\ell_{f,I}$ has the
properties stated in Proposition \ref{prop3c}, and $\ell_{f,I}^*\le \frac12$.

When formulating the main results, the following quantities are
useful: Given $\mu\in \cP$, $n\in \N$, and $x\in \Xi_n$, let
$$
{\sf L}^{\bullet}(x) = \max \lt\{
\ell_{F_{\mu}, [-\infty, x_{,1}]}(0), \ell_{F_{\mu}, [x_{,1},
  x_{,2}]}^*, \ldots,  \ell_{F_{\mu}, [x_{,n-1},
  x_{,n}]}^*, \ell_{F_{\mu}, [x_{,n}, +\infty]}(1)
\rt\} \, ;
$$
similarly, given $p\in \Pi_n$, let
$$
{\sf L}_{\bullet} (p) = \max\nolimits_{j=1}^n
\ell_{F_{\mu}^{-1}, [P_{,j-1}, P_{,j}]}^* \, .
$$
To illustrate these quantities for a concrete example, consider $\mu = \B_b$, where $\ell_{F_{\mu},[x_{,j}, x_{,j+1}]}^*$ is
the unique solution of
$$
b^{2\ell} = \frac{x_{,j+1} - \ell}{x_{,j} + \ell} \quad j=1, \ldots ,
n-1 \, ,
$$
whereas $\ell_{F_{\mu}, [-\infty, x_{,1}]}(0)$ and $\ell_{F_{\mu},
  [x_{,n}, +\infty]}(1)$ solve $b^{\ell} = x_{,1} -\ell$ and $b^{\ell}
= b/(x_{,n} + \ell)$, respectively. (Recall that $1\le
x_{,1} \le \ldots  \le x_{,n} \le b$.) Similarly, $\ell_{F_{\mu}^{-1}, [P_{,j-1},
  P_{,j}]}^*$ is the unique solution of
$$
2\ell  = b^{P_{,j} - \ell} - b^{P_{,j-1} + \ell} \quad j = 1, \ldots ,
n \, ;
$$
in particular, $j \mapsto \ell_{F_{\mu}^{-1}, [(j-1)/n, j/n]}^*$ is
increasing, and hence $ {\sf L}_{\bullet}(u_n)$ is the unique
solution of
\begin{equation}\label{eq3101}
2L = b^{1 - L} - b^{1 + L - 1/n} \, .
\end{equation}
By using functions of the form $\ell_{f,I}$, the value of $d_{\sf L}
(\mu, \nu)$ can easily be computed whenever $\nu$ has finite
support.
\begin{lem}\label{lem3d}
Let $\mu \in \cP$ and $n\in \N$. For every $x\in \Xi_n$ and $p\in
\Pi_n$,
\begin{equation}\label{eq3102}
d_{\sf L} \lt(\mu, \delta_x^p\rt) = \omega \max \nolimits_{j=0}^n
\ell_{F_{\mu} , [x_{,j}, x_{,j+1}]} (P_{,j}) = \omega
\max\nolimits_{j=1}^n \ell_{F_{\mu}^{-1}, [P_{,j-1}, P_{,j}]}(x_{,j})
\, .
\end{equation}
\end{lem}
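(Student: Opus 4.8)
The plan is to compute $d_{\sf L}(\mu,\delta_x^p)$ directly from its definition $d_{\sf L}(\mu,\delta_x^p) = \omega\, d_{\sf L}(F_\mu, F_{\delta_x^p})$, exploiting the very simple structure of the distribution function of a finitely supported measure. Recall $F_{\delta_x^p}$ is the step function that equals $P_{,j}$ on the interval $[x_{,j}, x_{,j+1}[$ for $j=0,\ldots,n$ (with the conventions $x_{,0}=-\infty$, $x_{,n+1}=+\infty$, $P_{,0}=0$, $P_{,n}=1$). By definition, $d_{\sf L}(F_\mu, F_{\delta_x^p})$ is the infimum of all $y\ge 0$ such that $F_\mu(\cdot - y) - y \le F_{\delta_x^p} \le F_\mu(\cdot + y) + y$ holds pointwise on $\R$. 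First I would observe that because $F_{\delta_x^p}$ is piecewise constant, the two-sided inequality $F_\mu(t-y)-y \le F_{\delta_x^p}(t) \le F_\mu(t+y)+y$ need only be checked on each flat piece $[x_{,j}, x_{,j+1}[$ where $F_{\delta_x^p}\equiv P_{,j}$; and on such a piece the constraint becomes: for all $t$ in that interval, $F_\mu(t-y)-y \le P_{,j}$ and $P_{,j} \le F_\mu(t+y)+y$.

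Next I would reduce each of these one-sided families of constraints to their extreme point. For the upper constraint $F_\mu(t-y)-y\le P_{,j}$ on $t\in[x_{,j},x_{,j+1}[$, monotonicity of $F_\mu$ shows the binding case is $t\uparrow x_{,j+1}$, giving $F_{\mu,-}(x_{,j+1}-y)-y\le P_{,j}$ (the left limit appearing because of the half-open interval / right-continuity). For the lower constraint $P_{,j}\le F_\mu(t+y)+y$, the binding case is $t= x_{,j}$, giving $P_{,j}\le F_{\mu,+}(x_{,j}+y)+y$; here I would be careful that at $j=0$, $x_{,0}=-\infty$ forces $P_{,0}=0$ and the lower constraint is vacuous, while at $j=n$, $x_{,n+1}=+\infty$ forces $P_{,n}=1$ and the upper constraint is vacuous — consistent with the fact that ${\sf L}^{\bullet}$ and the formula in the lemma evaluate the endpoint terms at $\ell_{F_\mu,[-\infty,x_{,1}]}(0)$ and $\ell_{F_\mu,[x_{,n},+\infty]}(1)$ via the values $P_{,0}=0$ and $P_{,n}=1$. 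Combining: the set of admissible $y$ for the $j$-th piece is exactly $\{y\ge 0: F_{\mu,-}(x_{,j+1}-y)-y\le P_{,j}\le F_{\mu,+}(x_{,j}+y)+y\}$, and comparing with the definition of $\ell_{f,I}$ with $f=F_\mu$ and $I=[x_{,j},x_{,j+1}]$ — noting $\sup I = x_{,j+1}$, $\inf I = x_{,j}$ — this set is precisely $[\ell_{F_\mu,[x_{,j},x_{,j+1}]}(P_{,j}),+\infty[$. Intersecting over $j=0,\ldots,n$ and taking the infimum yields the first equality in \eqref{eq3102}.

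For the second equality I would invoke inversion invariance: by Proposition \ref{prop3b}, $d_{\sf L}(F_\mu,F_{\delta_x^p}) = d_{\sf L}(F_\mu^{-1},F_{\delta_x^p}^{-1})$. The quantile function $F_{\delta_x^p}^{-1}$ is again a step function, constant and equal to $x_{,j}$ on $[P_{,j-1},P_{,j}[$ for $j=1,\ldots,n$ (using \eqref{00} and the definition of the upper inverse). Running exactly the same argument with $f=F_\mu^{-1}$ and the flat pieces indexed by $j=1,\ldots,n$ over the intervals $I=[P_{,j-1},P_{,j}]\subset[0,1[$ — where condition \eqref{eq3100} holds automatically, as noted after Proposition \ref{prop3c} — gives the admissible-$y$ set for piece $j$ as $[\ell_{F_\mu^{-1},[P_{,j-1},P_{,j}]}(x_{,j}),+\infty[$, and intersecting produces $\max_{j=1}^n \ell_{F_\mu^{-1},[P_{,j-1},P_{,j}]}(x_{,j})$. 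Multiplying through by $\omega$ finishes both identities. The only genuinely delicate point — the part I'd flag as the main obstacle — is the bookkeeping of left limits versus values (and of right limits versus values) at the jump points $x_{,j}$ and $P_{,j}$, i.e., making sure the half-open intervals are matched up with $F_{\mu,-}$ and $F_{\mu,+}$ correctly so that the infimum in the definition of $d_{\sf L}$ lands exactly on $\ell_{F_\mu,I}(P_{,j})$ and not on a strict inequality version of it; this is where I would lean on $f_-\le f\le f_+$, right-continuity of $F_\mu$, and the closedness of the admissible sets $[a,+\infty[$ guaranteed by the remark following the definition of $\ell_{f,I}$.
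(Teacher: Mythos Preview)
Your direct computation is correct when all the $x_{,j}$ are distinct (and, on the quantile side, when all the $p_{,j}$ are positive), but the lemma is stated for arbitrary $x\in\Xi_n$ and $p\in\Pi_n$, and this is where your argument has a genuine gap. If $x_{,j}=x_{,j+1}$ for some $1\le j\le n-1$, the interval $[x_{,j},x_{,j+1}[$ is empty, so that ``piece'' contributes no constraint whatsoever: the set of admissible $y$ coming from it is all of $[0,+\infty[$, \emph{not} the set $\{y\ge 0: F_{\mu-}(x_{,j+1}-y)-y\le P_{,j}\le F_{\mu}(x_{,j}+y)+y\}=[\ell_{F_\mu,\{x_{,j}\}}(P_{,j}),+\infty[$ that you write down. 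Your intersection-and-infimum step therefore actually produces
\[
\omega^{-1}d_{\sf L}\bigl(\mu,\delta_x^p\bigr)=\max\nolimits_{j:\,x_{,j}<x_{,j+1}}\ell_{F_\mu,[x_{,j},x_{,j+1}]}(P_{,j}),
\]
whereas the lemma asserts the maximum over \emph{all} $j=0,\ldots,n$. To close the gap you must show that every degenerate term $\ell_{F_\mu,\{x_{,j}\}}(P_{,j})$ is dominated by one of the non-degenerate ones.

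This is exactly the nontrivial half of the paper's proof. The paper first collapses $x$ to its distinct atoms $y\in\Xi_m^+$ with cumulative weights $Q$ (so that $\delta_y^q=\delta_x^p$), obtains $\omega^{-1}d_{\sf L}(\mu,\delta_x^p)=\max_{i=0}^m\ell_{F_\mu,[y_{,i},y_{,i+1}]}(Q_{,i})$ by essentially your argument, and then handles each degenerate $j$ via a three-case analysis: using that $\ell_{F_\mu,\{a\}}$ is non-increasing on $]-\infty,F_{\mu-}(a)]$, vanishes on $[F_{\mu-}(a),F_\mu(a)]$, and is non-decreasing on $[F_\mu(a),+\infty[$, together with $Q_{,i-1}\le P_{,j}\le Q_{,i}$ and $\ell_{f,\{a\}}\le\ell_{f,I}$ whenever $a\in I$, one bounds $\ell_{F_\mu,\{x_{,j}\}}(P_{,j})$ by either $\ell_{F_\mu,J_{i-1}}(Q_{,i-1})$ or $\ell_{F_\mu,J_i}(Q_{,i})$. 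The same issue (zero weights $p_{,j}$, hence $P_{,j-1}=P_{,j}$) must be treated for the second equality. So the delicate point is not the left/right-limit bookkeeping you flagged---that part is routine---but the possibility of repeated locations or vanishing weights, which your proposal does not address.
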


\begin{proof}
Label $x\in \Xi_n$ uniquely as
\begin{align*}
x_{,j_0 + 1} = \ldots = x_{,j_1} & < x_{,j_1+1} = \ldots = x_{,j_2}< x_{,j_2 +
1} = \ldots \\ & < \ldots = x_{,j_{m-1}}< x_{,j_{m-1}+1} = \ldots = x_{,j_m}
\, ,
\end{align*}
with integers $i\le j_i \le n$ for $1\le i\le m$, and $j_0 = 0$, $j_m
= n$, and define $y \in \Xi_m$ and $ q\in \Pi_m$ as
$y_{,i} = x_{,j_i}$ and $q_{,i} = P_{,j_i} - P_{,
  j_{i-1}}$, respectively, for $i=1, \ldots , m$. For convenience, let $I_j =
[x_{,j}, x_{, j+1}]$ for $j=0, \ldots, n$, and $J_i =
[y_{,i}, y_{,i+1}]= I_{j_i}$ for $i=0, \ldots,
m$. With this, $\delta_{y}^{q} = \delta_x^p$,
and
\begin{align*}
\omega^{-1} & d_{\sf L} \lt(\mu, \delta_x^p\rt)  = d_{\sf L} (F_{\mu},
F_{\delta_{y}^{q}}) \\
&= \inf \{ t \ge 0 : F_{\mu-} (y_{,i+1} - t) - t \le
Q_{,i} \le F_{\mu} (y_{,i} +t) + t \quad
\forall\ i=0, \ldots, m\} \\
& = \max\nolimits_{i=0}^m \ell_{F_{\mu}, J_i} (Q_{,i}) \\ &  \le  
\max\nolimits_{j=0}^n \ell_{F_{\mu}, I_j} (P_{,j}) \, .
\end{align*}
To prove the reverse inequality, pick any $j=0,\ldots, n$. If $x_{,j}<
x_{,j+1}$ then $I_j = J_i$ and $P_{,j} =
Q_{,i}$, with the appropriate $i$, and hence
$\ell_{F_{\mu}, I_j} (P_{,j}) = \ell_{F_{\mu},
  J_i}(Q_{,i})$. If $x_{,j} = x_{,j+1}$ then
$I_j = \{y_{,i}\}$ for some $i$. In this case either
$P_{,j} < F_{\mu-} (y_{,i})$ and $Q_{,i-1}\le
P_{,j}$, and hence
$$
\ell_{F_{\mu}, I_j} (P_{,j}) = \ell_{F_{\mu}, \{y_{,i}\}}
(P_{,j}) \le \ell_{F_{\mu}, \{y_{,i}\}}
(Q_{,i-1}) \le \ell_{F_{\mu}, J_{i-1}}
(Q_{,i-1}) \, ;
$$
or $F_{\mu-} (y_{,i}) \le P_{,j} \le F_{\mu}
(y_{,i})$, and hence $\ell_{F_{\mu}, I_j} (P_{,j}) = \ell_{F_{\mu}, \{y_{,i}\}}
(P_{,j}) = 0$;
or $P_{,j} > F_{\mu}(y_{,i})$ and $Q_{,i} \ge
P_{,j}$, and hence
$$
\ell_{F_{\mu}, I_j} (P_{,j}) = \ell_{F_{\mu}, \{y_{,i}\}}
(P_{,j}) \le \ell_{F_{\mu}, \{y_{,i}\}}
(Q_{,i}) \le \ell_{F_{\mu}, J_i}
(Q_{,i}) \, .
$$
In all three cases, therefore, $\omega ^{-1} d_{\sf L} \lt(\mu ,\delta_x^p\rt) \ge
\max_{j=0}^n \ell_{F_{\mu}, I_j} (P_{,j})$, which establishes the
first equality in (\ref{eq3102}). The second equality, a consequence
of Proposition \ref{prop3b}, is proved analogously.
\end{proof}

Utilizing Lemma \ref{lem3d}, it is straightforward to characterize the
best finitely supported $d_{\sf L}$-approximations of $\mu\in \cP$ with prescribed locations.

\begin{theorem}\label{thm3e}
Let $\mu \in \cP$ and $n\in \N$. For every $x\in \Xi_n$, there exists
a best $d_{\sf L}$-approximation of $\mu$, given $x$. Moreover,
$d_{\sf L} \lt(\mu, \delta_x^p \rt) = d_{\sf L} \bigl(\mu, \delta_x^{\bullet} \bigr)$ if
and only if, for every $j=0,\ldots, n$,
\begin{equation}\label{eq3103}
x_{,j} < x_{,j+1} \enspace \Longrightarrow \enspace
\ell_{F_{\mu}, [x_{,j}, x_{,j+1}]} (P_{,j}) \le {\sf L}^{\bullet} (x)\, ,
\end{equation}
and in this case $d_{\sf L} \lt(\mu, \delta_x^p \rt)= \omega {\sf L}^{\bullet}(x)$.
\end{theorem}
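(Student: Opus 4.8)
The plan is to work entirely with the formula from Lemma~\ref{lem3d}, which already expresses $\omega^{-1}d_{\sf L}(\mu,\delta_x^p) = \max_{j=0}^n \ell_{F_\mu,[x_{,j},x_{,j+1}]}(P_{,j})$ as a function of the partial-sum vector $(P_{,0},\ldots,P_{,n})$ alone; minimizing over $p\in\Pi_n$ is the same as minimizing over the polytope of nondecreasing sequences $0=P_{,0}\le P_{,1}\le\cdots\le P_{,n}=1$. First I would dispose of the indices $j$ with $x_{,j}=x_{,j+1}$: the interval $I_j=\{x_{,j}\}$ is a singleton, $\ell_{F_\mu,\{x_{,j}\}}$ vanishes on $[F_{\mu-}(x_{,j}),F_\mu(x_{,j})]$, and (as seen inside the proof of Lemma~\ref{lem3d}) the contribution of such a $j$ is always dominated by the contribution of a neighbouring nondegenerate interval; so the objective effectively depends only on the values $P_{,j}$ for $j$ with $x_{,j}<x_{,j+1}$, and ${\sf L}^\bullet(x)$ is exactly the value obtained by choosing each such $P_{,j}$ to sit at the minimizer of $\ell_{F_\mu,I_j}$ while the two ``boundary'' terms $\ell_{F_\mu,[-\infty,x_{,1}]}(0)$ and $\ell_{F_\mu,[x_{,n},+\infty]}(1)$ are forced (their arguments $0$ and $1$ are not free). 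This identifies ${\sf L}^\bullet(x)$ as a genuine lower bound for $\omega^{-1}d_{\sf L}(\mu,\delta_x^q)$ over all $q$.

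Next I would prove existence and the lower bound together. For each nondegenerate interval $I_j=[x_{,j},x_{,j+1}]$, Proposition~\ref{prop3c} (applicable since $f=F_\mu$ satisfies (\ref{eq3100})) says $\ell_{F_\mu,I_j}$ is Lipschitz, attains its minimum $\ell_{F_\mu,I_j}^*$, and the minimum is attained on a (nonempty, closed, bounded) interval of values of the argument; moreover $\ell_{F_\mu,I_j}^*\le\ell_{F_\mu,[-\infty,x_{,1}]}(0)$ or $\le\ell_{F_\mu,[x_{,n},+\infty]}(1)$ need not hold, which is precisely why ${\sf L}^\bullet(x)$ takes a maximum with those two boundary terms. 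The key monotonicity fact to record is: for fixed $\mu$ and fixed endpoints, $\ell_{F_\mu,[a,b]}$ is a convex-like ``valley'' function — nonincreasing then zero then nondecreasing — so on the polytope $\{0=P_{,0}\le\cdots\le P_{,n}=1\}$ the pointwise maximum $\max_j \ell_{F_\mu,I_j}(P_{,j})$ is a continuous function of $P$, hence attains its minimum on this compact set; call a minimizing $P$ (equivalently, the corresponding $p$) a best approximation $\delta_x^\bullet$. That its value equals $\omega{\sf L}^\bullet(x)$ follows because the boundary terms are unavoidable, and for the interior terms one can always, by the valley shape and the freedom to interpolate monotonically, choose $P_{,j}$ at (or as close as the monotonicity constraints permit to) the minimizer of $\ell_{F_\mu,I_j}$ — and here one must check that the valleys of consecutive intervals are ``compatibly ordered,'' i.e. one never needs $P_{,j}>P_{,j+1}$; this holds because the valley of $\ell_{F_\mu,[x_{,j},x_{,j+1}]}$ lies (roughly) around the $F_\mu$-mass assigned up to a point between $x_{,j}$ and $x_{,j+1}$, and these move monotonically in $j$.

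Finally, the characterization (\ref{eq3103}): given any $p$, by Lemma~\ref{lem3d} we have $\omega^{-1}d_{\sf L}(\mu,\delta_x^p)=\max\{\ell_{F_\mu,[-\infty,x_{,1}]}(0),\,\max_{j:\,x_{,j}<x_{,j+1}}\ell_{F_\mu,I_j}(P_{,j}),\,\ell_{F_\mu,[x_{,n},+\infty]}(1)\}$ (the degenerate $j$'s contributing nothing new, by the first step). Since ${\sf L}^\bullet(x)$ is the value at a best approximation, $d_{\sf L}(\mu,\delta_x^p)=d_{\sf L}(\mu,\delta_x^\bullet)=\omega{\sf L}^\bullet(x)$ holds iff every term in that maximum is $\le{\sf L}^\bullet(x)$; the two boundary terms are always $\le{\sf L}^\bullet(x)$ by definition of ${\sf L}^\bullet$, so the condition reduces exactly to: $\ell_{F_\mu,[x_{,j},x_{,j+1}]}(P_{,j})\le{\sf L}^\bullet(x)$ for every $j$ with $x_{,j}<x_{,j+1}$, which is (\ref{eq3103}). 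Conversely, if (\ref{eq3103}) holds then all terms are $\le{\sf L}^\bullet(x)$, so $d_{\sf L}(\mu,\delta_x^p)\le\omega{\sf L}^\bullet(x)$, and the reverse inequality is the lower bound established above; hence equality.

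I expect the main obstacle to be the ``compatibility'' check in the middle step: verifying that one can simultaneously place each interior $P_{,j}$ at the bottom of its own valley while respecting $P_{,0}\le P_{,1}\le\cdots\le P_{,n}$, so that the minimum of the pointwise maximum over the polytope genuinely equals $\max_j(\text{individual minima, with the two forced boundary terms})$ rather than something strictly larger. This requires showing the valley-bottoms of $\ell_{F_\mu,I_j}$, as $j$ increases over nondegenerate intervals, form a nondecreasing (as intervals of admissible values) family — intuitively clear from $\ell_{F_\mu,[a,b]}^*=\min\{y\ge0:F_{\mu-}(b-y)-y\le F_\mu(a+y)+y\}$ and monotonicity of $F_\mu$, but needing a careful argument because the intervals $I_j$ may overlap only at endpoints and $F_\mu$ can have jumps. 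Everything else is either a direct appeal to Propositions~\ref{prop3b} and \ref{prop3c} and Lemma~\ref{lem3d}, or the routine compactness/continuity argument for existence.
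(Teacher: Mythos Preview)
Your outline is sound: the lower bound via Lemma~\ref{lem3d}, the handling of degenerate indices, and the characterization step all match the paper. The one place where you and the paper diverge is the achievability step, and there your framing is slightly off in a way that creates the very obstacle you flag.

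You propose to place each interior $P_{,j}$ \emph{at the minimizer} of $\ell_{F_\mu,I_j}$ and then worry whether those minimizers are compatibly ordered. But that is stronger than needed: to achieve $\omega{\sf L}^\bullet(x)$ you only need each $P_{,j}$ in the \emph{sublevel set} $\{t:\ell_{F_\mu,I_j}(t)\le{\sf L}^\bullet(x)\}=[F_{\mu-}(x_{,j+1}-L)-L,\;F_\mu(x_{,j}+L)+L]$ with $L={\sf L}^\bullet(x)$. Since $L\ge\ell_{F_\mu,I_j}^*$ this interval is nonempty, and both its endpoints are nondecreasing in $j$ simply because $x_{,j},x_{,j+1}$ and $F_{\mu-},F_\mu$ are nondecreasing. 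So threading by a nondecreasing sequence is trivial --- the ``compatibility'' problem dissolves once you relax from argmins to sublevel sets.

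The paper exploits exactly this: rather than arguing abstractly, it writes down the explicit choice
\[
Q_{,j}=\tfrac12\bigl(F_{\mu-}(x_{,j+1}-{\sf L}^\bullet(x))+F_\mu(x_{,j}+{\sf L}^\bullet(x))\bigr),\qquad j=1,\ldots,n-1,
\]
which is manifestly nondecreasing in $j$, lies in $[0,1]$ (using $L\ge\ell_{F_\mu,I_0}(0)$ and $L\ge\ell_{F_\mu,I_n}(1)$), and sits inside each sublevel set by construction. This one-line formula replaces your compactness argument and your compatibility check simultaneously. Your route would also work once reframed as above, but the paper's explicit $q$ is both shorter and more transparent.
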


\begin{proof}
Fix $\mu \in \cP$, $n\in \N$, and $x\in \Xi_n$. As in the proof of
Lemma \ref{lem3d}, write $I_j=[x_{,j}, x_{,j+1}]$ for convenience. By
(\ref{eq3102}), for every $p\in \Pi_n$,
\begin{align*}
d_{\sf L} \lt(\mu, \delta_x^p \rt) & = \omega \max\nolimits_{j=0}^n
\ell_{F_{\mu}, I_j} (P_{,j}) \\
& \ge \omega \max \{ \ell_{F_{\mu}, I_0}(0), \ell_{F_{\mu}, I_1}^*
, \ldots , \ell_{F_{\mu}, I_{n-1}}^*, \ell_{F_{\mu}, I_n} (1)\} =
\omega {\sf L}^{\bullet} (x) \, .
\end{align*}
As seen in the proof of Lemma \ref{lem3d}, validity of \eqref{eq3103}
implies $\ell_{F_{\mu},[x_{,j},x_{,j+1}]}(P_{,j})$ $\le{\sf L}^{\bullet}(x)$ for {\em all} $j=0,\ldots,n$. Thus $\delta_x^p$ is a best $d_{\sf L}$-approximation of $\mu$, given
$x$, whenever (\ref{eq3103}) holds, i.e., the latter is sufficient for
optimality. On the other hand, consider $q\in \Pi_n$ with
$$
Q_{,j} = \frac12 \Bigl( F_{\mu -} \bigl(x_{,j+1} - {\sf
  L}^{\bullet}(x )\bigr) + F_{\mu} \bigl(x_{,j} + {\sf L}^{\bullet}(x)
\bigr)\Bigr)
\quad \forall j = 1, \ldots , n-1 \, .
$$
Note that $q$ is well-defined, since $j\mapsto
Q_{,j}$ is non-decreasing, and $0\le Q_{,j}\le
1$ for all $j=1, \ldots , n-1$. Moreover, by the definition of ${\sf
  L}^{\bullet}(x)$,
$$
\ell_{F_{\mu}, I_j} \lt(Q_{,j}\rt) \le {\sf L}^{\bullet}(x)  \quad
\forall j=0, \ldots , n \, ,
$$
and hence $d_{\sf L} \lt(\delta_x^{q}, \mu\rt) = \omega
{\sf L}^{\bullet}(x)$. This shows that best $d_{\sf
  L}$-approximations of $\mu$, given $x$, do exist, and
(\ref{eq3103}) also is necessary for optimality. 
\end{proof}

Best finitely supported $d_{\sf L}$-approximations of any $\mu\in \cP$ with prescribed
weights can be characterized in a similar manner. By virtue of (\ref{eq3102}), the proof
of the following is completely analogous to the proof of Theorem
\ref{thm3e} above.

\begin{prp}\label{prop3f}
Let $\mu \in \cP$ and $n\in \N$. For every $p\in \Pi_n$, there exists
a best $d_{\sf L}$-approximation of $\mu$, given $p$. Moreover,
$d_{\sf L} \lt(\mu, \delta_x^p \rt) = d_{\sf L} \lt(\mu, \delta_{\bullet}^p \rt)$ if
and only if, for every $j=1,\ldots, n$,
\begin{equation}\label{eq3104}
P_{,j-1} < P_{,j} \enspace \Longrightarrow \enspace
\ell_{F_{\mu}^{-1}, \lt[P_{,j-1}, P_{,j}\rt]} (x_{,j}) \le {\sf L}_{\bullet} (p),
\end{equation}
and in this case $d_{\sf L} \lt(\mu, \delta_x^p\rt)= \omega {\sf L}_{\bullet}(p)$.
\end{prp}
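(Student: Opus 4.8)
The plan is to mirror the proof of Theorem \ref{thm3e}, using the second equality in \eqref{eq3102} in place of the first and working with $F_\mu^{-1}$ (on the interval $[0,1]$, where \eqref{eq3100} is automatic) rather than $F_\mu$. Concretely, fix $\mu\in\cP$, $n\in\N$, and $p\in\Pi_n$, and for brevity write $K_j=[P_{,j-1},P_{,j}]$ for $j=1,\ldots,n$. By \eqref{eq3102}, for every $y\in\Xi_n$ one has $d_{\sf L}(\mu,\delta_y^p)=\omega\max_{j=1}^n\ell_{F_\mu^{-1},K_j}(y_{,j})\ge\omega\max_{j=1}^n\ell_{F_\mu^{-1},K_j}^*=\omega{\sf L}_\bullet(p)$, where terms with $P_{,j-1}=P_{,j}$ contribute a degenerate interval $K_j=\{P_{,j}\}$ and are handled exactly as the coinciding-atom case in Lemma \ref{lem3d}: if $P_{,j-1}=P_{,j}$ then $\ell_{F_\mu^{-1},K_j}(y_{,j})$ is dominated by $\ell_{F_\mu^{-1},K_{j'}}(y_{,j'})$ for a neighbouring non-degenerate block $K_{j'}$, so condition \eqref{eq3104} for the non-degenerate indices forces $\ell_{F_\mu^{-1},K_j}(y_{,j})\le{\sf L}_\bullet(p)$ for \emph{all} $j$. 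Hence \eqref{eq3104} is sufficient for optimality, and whenever it holds $d_{\sf L}(\mu,\delta_y^p)=\omega{\sf L}_\bullet(p)$.

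For existence and necessity I would exhibit an explicit optimal $y\in\Xi_n$, in analogy with the probability vector $q$ constructed in the proof of Theorem \ref{thm3e}. Set
$$
y_{,j}=\tfrac12\Bigl((F_\mu^{-1})_-\bigl(P_{,j}-{\sf L}_\bullet(p)\bigr)+F_\mu^{-1}\bigl(P_{,j-1}+{\sf L}_\bullet(p)\bigr)\Bigr)\quad\forall\,j=1,\ldots,n,
$$
reading $(F_\mu^{-1})_-$ for the left-continuous regularization of $F_\mu^{-1}$. Monotonicity of $F_\mu^{-1}$ together with $P_{,j-1}\le P_{,j}$ gives $y_{,1}\le\cdots\le y_{,n}$ and, since $\operatorname{supp}\mu\subset\I$, each $y_{,j}\in\I$, so $y\in\Xi_n$ is well-defined. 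By the characterization of $\ell_{F_\mu^{-1},K_j}^*$ in Proposition \ref{prop3c} (applied to $f=F_\mu^{-1}$, $I=K_j$) the midpoint choice yields $\ell_{F_\mu^{-1},K_j}(y_{,j})\le\ell_{F_\mu^{-1},K_j}^*\le{\sf L}_\bullet(p)$ for every $j$, whence $d_{\sf L}(\mu,\delta_y^p)=\omega{\sf L}_\bullet(p)$; this proves existence of $\delta_\bullet^p$ and shows $d_{\sf L}(\mu,\delta_\bullet^p)=\omega{\sf L}_\bullet(p)$. Necessity then follows: if \eqref{eq3104} fails for some non-degenerate $j$, the first equality in \eqref{eq3102} gives $d_{\sf L}(\mu,\delta_y^p)\ge\omega\,\ell_{F_\mu^{-1},K_j}(y_{,j})>\omega{\sf L}_\bullet(p)=d_{\sf L}(\mu,\delta_\bullet^p)$, so $\delta_y^p$ is not optimal.

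I expect the only genuine subtlety to be the bookkeeping around degenerate intervals $K_j=\{P_{,j}\}$ arising from zero weights $p_{,j}=0$ — verifying that the $\ell_{F_\mu^{-1},\{a\}}$ comparison (monotone decrease on $]-\infty,(F_\mu^{-1})_-(a)]$, vanishing on $[(F_\mu^{-1})_-(a),(F_\mu^{-1})_+(a)]$, monotone increase thereafter, exactly as recorded after Proposition \ref{prop3b}) lets one absorb those indices into an adjacent block, just as the coinciding-atoms analysis does in Lemma \ref{lem3d}. Everything else is a verbatim transcription of the argument for Theorem \ref{thm3e} with the roles of $F_\mu$ and $F_\mu^{-1}$, and of weights and positions, interchanged; this is precisely why the paper can assert that ``the proof of the following is completely analogous.''
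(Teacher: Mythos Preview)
Your approach is correct and is exactly what the paper intends: it mirrors the proof of Theorem~\ref{thm3e} via the second equality in \eqref{eq3102}, and your explicit candidate $y_{,j}=\tfrac12\bigl((F_\mu^{-1})_-(P_{,j}-{\sf L}_\bullet(p))+F_\mu^{-1}(P_{,j-1}+{\sf L}_\bullet(p))\bigr)$ is precisely the construction the paper itself uses later in the proof of Theorem~\ref{thm3h}. Two small slips to fix: the chain $\ell_{F_\mu^{-1},K_j}(y_{,j})\le\ell_{F_\mu^{-1},K_j}^*$ is backwards (the correct statement is simply $\ell_{F_\mu^{-1},K_j}(y_{,j})\le{\sf L}_\bullet(p)$, which follows directly since ${\sf L}_\bullet(p)\ge\ell_{F_\mu^{-1},K_j}^*$ makes the relevant interval nonempty and the midpoint lies in it), and in the necessity argument you mean the \emph{second} equality in \eqref{eq3102}, not the first.
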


\rb
(i) With $f, I$ as in Proposition \ref{prop3c}, for every $a \in
\R$ the set $\{\ell_{f,I} \le
a\}$ is a (possibly empty or one-point) interval. Thus, conditions (\ref{eq3103}) and (\ref{eq3104}) are very similar in
spirit to the requirements of \cite[Thm.5.1 and 5.5]{XB}, restated in Proposition~\ref{prop4a} below, though
the latter may be quite a bit easier to work with in concrete calculations.

(ii) Note that if $n=1$ then (\ref{eq3103}) holds automatically, whereas
(\ref{eq3104}) shows that $d_{\sf L} (\mu, \delta_{a})$ is
minimal precisely if the function $\ell_{F_{\mu}^{-1},[0,1]}$ attains its
minimal value at $a$.
\re

As a corollary, Proposition \ref{prop3f} identifies all best {\em uniform\/}
$d_{\sf L}$-approximations of $\beta_b$ with $b>1$. Recall that
$\I=[1,b]$, and hence $\omega=\displaystyle \frac{\max\{b,2\}-1}{b-1}=:\omega_b$ in this case.

\begin{cor}\label{coLU}
Let $b>1$ and $n\in \N$. Then $\delta_x^{u_n}$ is a
best uniform $d_{\sf L}$-appro\-xi\-ma\-tion of\, $\B_b$ if and only if
$$
b^{j/n -L} - L \le x_{,j} \le b^{(j-1)/n +L} +
L\quad \forall j = 1, \ldots , n \, ,
$$
where $L$ is the unique solution of
{\rm (\ref{eq3101})}; in particular, $\# \mbox{\rm supp}\,
\delta_{\bullet}^{u_n}= n$. Moreover, $d_{\sf L} \lt(\B_b ,
\delta_{\bullet}^{u_n}\rt)=\omega_b L$, and
$$
\lim\nolimits_{n\to \infty} n d_{\sf L} \lt(\B_b, \delta_{\bullet}^{u_n}\rt)
= \frac{\max\{b,2\}-1}{2b-2}\cdot\frac{b \log b}{1+ b \log b} \, .
$$
\end{cor}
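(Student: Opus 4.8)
The plan is to specialize Proposition~\ref{prop3f} to $\mu=\B_b$ and $p=u_n$. In this case $\I=[1,b]$, so $\omega=\omega_b$, and $F_{\B_b}^{-1}$ coincides on $[0,1]$ with the continuous non-decreasing function $y\mapsto b^y$. As recorded in the discussion preceding Lemma~\ref{lem3d}, this makes $\ell^*_{F_{\B_b}^{-1},[(j-1)/n,\,j/n]}$ the unique root of $2\ell=b^{j/n-\ell}-b^{(j-1)/n+\ell}$, and since $j\mapsto\ell^*_{F_{\B_b}^{-1},[(j-1)/n,\,j/n]}$ is increasing, ${\sf L}_\bullet(u_n)=L$, the unique solution of \eqref{eq3101}. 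I would first record the elementary bound $0<L<\frac1{2n}$: the right-hand side of \eqref{eq3101}, viewed as a function of $L$, is strictly decreasing, positive at $L=0$, and equal to $0<\frac1n$ at $L=\frac1{2n}$.

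For the characterization of optimal positions I would compute $\ell_{F_{\B_b}^{-1},[(j-1)/n,\,j/n]}(x)$ straight from the definition of $\ell_{f,I}$: it equals $\max\{y_1,y_2\}$, where $y_1\ge0$ solves $b^{j/n-y_1}-y_1=x$ (and $y_1=0$ when $x\ge b^{j/n}$) and $y_2\ge0$ solves $b^{(j-1)/n+y_2}+y_2=x$ (and $y_2=0$ when $x\le b^{(j-1)/n}$). Monotonicity of $y\mapsto b^{j/n-y}-y$ and of $y\mapsto b^{(j-1)/n+y}+y$ then gives $\ell_{F_{\B_b}^{-1},[(j-1)/n,\,j/n]}(x)\le L$ if and only if $b^{j/n-L}-L\le x\le b^{(j-1)/n+L}+L$. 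Since $P_{,j-1}=(j-1)/n<j/n=P_{,j}$ for every $j$, condition \eqref{eq3104} of Proposition~\ref{prop3f} becomes exactly the asserted system of inequalities, and Proposition~\ref{prop3f} then also yields $d_{\sf L}(\B_b,\delta_\bullet^{u_n})=\omega\,{\sf L}_\bullet(u_n)=\omega_b L$ (in particular the set of admissible $x$ is nonempty).

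To obtain $\#\mbox{\rm supp}\,\delta_\bullet^{u_n}=n$ (trivial when $n=1$), write $a_j=b^{j/n-L}-L$ and $c_j=b^{(j-1)/n+L}+L$ for the endpoints of the $j$-th box; both are strictly increasing in $j$. If $x\in\Xi_n$ satisfies the inequalities above, then $x_{,j}\le c_j$ while $x_{,j+1}\ge a_{j+1}$, so it suffices to show $a_{j+1}>c_j$, i.e.\ $b^{(j+1)/n-L}-b^{(j-1)/n+L}>2L$, for $j=1,\ldots,n-1$. The left-hand side equals $b^{(j-1)/n}\bigl(b^{2/n-L}-b^L\bigr)$, and $b^{2/n-L}-b^L>0$ because $L<\frac1{2n}<\frac1n$; hence it is increasing in $j$, and it is enough to treat $j=1$, i.e.\ to prove $b^{2/n-L}-b^L>2L$. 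Setting $m=b^{1/n}>1$ and $\ell=nL\in(0,\frac12)$, and using \eqref{eq3101} in the form $m^{1-\ell}-m^\ell=2\ell/(nm^{n-1})$ to eliminate the exponential, this inequality is equivalent to $n\,m^{n-\ell}>2\ell\sum_{k=0}^{n-2}m^k$, which holds since $2\ell<1$ while the sum on the right has $n-1$ terms, each at most $m^{n-2}<m^{n-\ell}$. I expect this reduction --- converting the transcendental inequality $b^{2/n-L}-b^L>2L$ into the clean polynomial estimate above by means of the defining relation \eqref{eq3101} --- to be the main technical point of the argument.

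Finally, for the asymptotics I would put $\ell_n=nL_n\in(0,\frac12)$ and rewrite \eqref{eq3101} as $2\ell_n=nb\bigl(b^{-\ell_n/n}-b^{(\ell_n-1)/n}\bigr)$. A second-order Taylor expansion of $t\mapsto b^t$ about $t=0$, with remainder uniformly $O(n^{-2})$ since $0\le\ell_n<\frac12$, gives $2\ell_n=b\log b\,(1-2\ell_n)+O(n^{-1})$, whence $\ell_n=\frac{b\log b}{2(1+b\log b)}+O(n^{-1})$. Therefore $n\,d_{\sf L}(\B_b,\delta_\bullet^{u_n})=\omega_b\,\ell_n\to\omega_b\cdot\frac{b\log b}{2(1+b\log b)}=\frac{\max\{b,2\}-1}{2b-2}\cdot\frac{b\log b}{1+b\log b}$, the claimed limit.
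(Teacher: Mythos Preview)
Your proof is correct and follows exactly the route the paper indicates: the corollary is stated without proof as a direct specialization of Proposition~\ref{prop3f}, and you have supplied all the details---the computation of $\ell_{F_{\B_b}^{-1},[(j-1)/n,j/n]}(x)$, the verification that the admissible intervals are pairwise separated (hence $\#\mathrm{supp}\,\delta_\bullet^{u_n}=n$), and the Taylor-expansion argument for the limit. The reduction of the separation inequality via \eqref{eq3101} to $n\,m^{n-\ell}>2\ell\sum_{k=0}^{n-2}m^k$ is clean and correct (obtained by substituting $m^\ell=m^{1-\ell}-2\ell/(nm^{n-1})$ into $m^{2-\ell}-m^\ell$).
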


\begin{example}\label{ex37a}
Consider the $\mbox{\tt Beta}(2,1)$ distribution
on $\I = [0,1]$, i.e., let $F_{\mu}(x)=x^2$ for all $x\in
\I$. Given $n\in \N$, it is straightforward to check that, analogously
to (\ref{eq3101}), ${\sf L}_{\bullet}(u_n)$ is the unique solution of
\begin{equation}\label{eq37a1}
L \sqrt{\frac2{n} - 4L^2} = \frac1{2n} - L \, ,
\end{equation}
and $\delta_x^{u_n}$ with $x\in \Xi_n$ is a best uniform $d_{\sf
  L}$-approximation of $\mu$ if and only if
$$
\sqrt{\frac{j}{n} - L} - L \le x_{,j} \le \sqrt{\frac{j-1}{n} + L} + L
\quad \forall j=1 , \ldots , n \, .
$$
Moreover, $d_{\sf L} (\mu , \delta_{\bullet}^{u_n}) = L$, and
(\ref{eq37a1}) yields that $\lim_{n\to \infty} n d_{\sf L} (\mu ,
\delta_{\bullet}^{u_n}) = \frac12$. Unlike in the case of $\beta_b$,
it is possible to have $\# \mbox{\rm supp}\, \delta_{\bullet}^{u_n}
<n$ whenever $n\ge 10$.
\end{example}

\begin{example}\label{ex37b}
Let again $\I = [0,1]$ and consider $\mu \in \cP$ with $\mu (\{
i2^{-m}\}) = 3^{-m}$ for every $m\in \N$ and every odd $1\le i <
2^m$. Thus $\mu$ is a discrete measure with $\mbox{\rm supp}\, \mu =
\I$. In fact, $\mu$ simply is the {\em inverse Cantor distribution},
in the sense that $F_{\mu}^{-1} (x) = F_{\nu}(x)$ for all $x\in \I$,
where $\nu$ is the $\log 2/\log 3$-dimensional Hausdorff measure on
the classical Cantor middle-thirds set. Given $n\in \N$, Proposition
\ref{prop3f} guarantees the existence of a best uniform $d_{\sf
  L}$-approximation of $\mu$, though the explicit value of ${\sf
  L}_{\bullet}(u_n)$ is somewhat cumbersome to determine. Still,
utilizing the self-similarity of $F_{\mu}^{-1}$, one finds that
\begin{equation}\label{eq36n1}
\frac{1}{216}\le\liminf\nolimits_{n\to\infty}nd_{\sf
  L}\lt(\mu,\delta_{\bullet}^{u_n}\rt)\le\frac{1}{3}\, ,
\quad \limsup\nolimits_{n\to\infty}nd_{\sf
  L}\lt(\mu,\delta_{\bullet}^{u_n}\rt)=\frac{1}{2} \, .
\end{equation}
Thus $(n^{-1})$ is the precise rate of decay of $\bigl( d_{\sf L}
(\mu, \delta_{\bullet}^{u_n})\bigr)$, just as in the case of $\B_b$
and $\mbox{\tt Beta}(2,1)$, but unlike for the latter, $\lim_{n\to
  \infty} n d_{\sf L} (\mu, \delta_{\bullet}^{u_n})$ does not exist.
\end{example}

By combining Theorem \ref{thm3e} and Proposition \ref{prop3f}, it is possible to
characterize the best $d_{\sf L}$-approximations of $\mu \in \cP$ as
well, that is, to identify the minimizers of $\nu \mapsto d_{\sf L}(\mu, \nu)$
subject only to the requirement that $\# \mbox{\rm supp}\, \nu \le n$. To
this end, associate with every non-decreasing
function $f:\R \to \overline{\R}$ and every number $a\ge 0$ a
map $T_{f,a}:\overline{\R} \to \overline{\R}$, according to
$$
T_{f, a} (x) = f_+ \lt(f^{-1}(x+a) + 2a\rt) +a \quad
\forall\ x \in \overline{\R} \, .
$$
For every $n\in \N$, denote by $T_{f,a}^{[n]}$ the $n$-fold
composition of $T_{f,a}$ with itself. The following properties of
$T_{f,a}$ are readily verified.

\begin{prp}\label{prop3g}
Let $f:\R \to \overline{\R}$ be non-decreasing, $a \ge 0$, and
$n\in \N$. Then $T_{f,a}^{[n]}$ is non-decreasing and
right-continuous. Also, $a \mapsto
T_{f,a}^{[n]}(x)$ is increasing and right-continuous for every
$x\in \R$, and if
$x\le a + f(+\infty)$ then the sequence $\lt(
T_{f,a}^{[k]}(x)\rt)$ is non-decreasing.
\end{prp}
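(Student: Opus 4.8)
The plan is to reduce everything to two elementary facts about extended-real-valued monotone functions: (a) if $g,h:\overline{\R}\to\overline{\R}$ are non-decreasing and right-continuous, then so are $g\circ h$ and $g+h$; and (b) adding a constant, or composing with a strictly increasing continuous map, preserves monotonicity and does not destroy right-continuity. Fact (a) is immediate — for $x_k\downarrow x_0$ one has $h(x_k)\downarrow h(x_0)$ by monotonicity and right-continuity of $h$, hence $g(h(x_k))\downarrow g(h(x_0))$ by the same properties of $g$; the claim for sums is even easier. Now write $T_{f,a}(x)=h_a\bigl(g_a(x+a)\bigr)$ with $g_a(t)=f^{-1}(t)+2a$ and $h_a(u)=f_+(u)+a$. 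Recalling from Proposition~\ref{prop3a} and the discussion preceding it that $f^{-1}$ (hence $g_a$) and $f_+$ (hence $h_a$) are non-decreasing and right-continuous, fact (a) shows $T_{f,a}$ is non-decreasing and right-continuous, and composing $n$ times gives that $T_{f,a}^{[n]}$ is non-decreasing and right-continuous. This is the first assertion.

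For the second assertion I would fix $x$ and induct on $n$, viewing $T_{f,a}^{[n]}(x)=T_{f,a}\bigl(T_{f,a}^{[n-1]}(x)\bigr)$ as a function of $a$. Right-continuity in $a$: the inner map $a\mapsto T_{f,a}^{[n-1]}(x)$ is non-decreasing and right-continuous by the induction hypothesis (the case $n=1$ being trivial, since then the inner map is the constant $x$); substituting it into the explicit formula and peeling off the operations $a\mapsto(\,\cdot\,)+a$, then $f^{-1}$, then $a\mapsto(\,\cdot\,)+2a$, then $f_+$, then $a\mapsto(\,\cdot\,)+a$ — each a sum with a continuous non-decreasing map of $a$, or a composition with $f^{-1}$ or $f_+$ — keeps us inside the class of non-decreasing right-continuous functions of $a$ by fact (a). The same peeling yields that $a\mapsto T_{f,a}^{[n]}(x)$ is non-decreasing; to upgrade this to \emph{increasing}, note that in $T_{f,a}(y)=f_+\bigl(f^{-1}(y+a)+2a\bigr)+a$ the bracketed term is non-decreasing in $a$ for fixed $y$ while the trailing $+a$ is strictly increasing, so $a\mapsto T_{f,a}(y)$ is strictly increasing wherever finite; feeding in $y=T_{f,a}^{[n-1]}(x)$, which is non-decreasing in $a$, and using that $T_{f,a}$ is non-decreasing in its argument, one gets, for $a<a'$, $T_{f,a'}^{[n]}(x)\ge T_{f,a'}\bigl(T_{f,a}^{[n-1]}(x)\bigr)>T_{f,a}^{[n]}(x)$ whenever the last term is finite (and if it is $+\infty$, so is the first).

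For the third assertion it suffices, since $T_{f,a}$ is non-decreasing (first assertion), to prove that $T_{f,a}(x)\ge x$ whenever $x\le a+f(+\infty)$: once this holds at the starting point, $T_{f,a}^{[k+1]}(x)=T_{f,a}^{[k]}\bigl(T_{f,a}(x)\bigr)\ge T_{f,a}^{[k]}(x)$ because $T_{f,a}^{[k]}$ is non-decreasing, so the whole orbit is non-decreasing with no need to re-check the hypothesis at later iterates. To get $T_{f,a}(x)\ge x$ I would use the basic inverse-function estimate that $f_+(u)\ge\min\{v,f(+\infty)\}$ whenever $u\ge f^{-1}(v)$, a quick consequence of $f^{-1}(v)=\sup\{y:f(y)\le v\}$ and $f_+(y)=\lim_{\varepsilon\downarrow0}f(y+\varepsilon)$ (checked by splitting according to whether $f^{-1}(v)$ is finite, $+\infty$, or $-\infty$). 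Applying it with $v=x+a$ and $u=f^{-1}(x+a)+2a\ge f^{-1}(x+a)$ gives
\begin{align*}
T_{f,a}(x)=f_+\bigl(f^{-1}(x+a)+2a\bigr)+a&\ge\min\{x+a,\,f(+\infty)\}+a\\
&=\min\{x+2a,\ f(+\infty)+a\}\ge x\,,
\end{align*}
the last step using $a\ge0$ together with the hypothesis $x\le a+f(+\infty)$. The routine parts are the $\pm\infty$ bookkeeping and the verifications of fact (a) and of the inverse-function estimate; the one spot calling for a little care is the interplay of strict and non-strict monotonicity in the second assertion, where the final $+a$ summand is exactly what rescues strictness from the possibly flat stretches of $f^{-1}$ and $f_+$.
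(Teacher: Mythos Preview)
Your proof is correct. The paper itself gives no argument for Proposition~\ref{prop3g}, stating only that the properties ``are readily verified''; your write-up supplies exactly those routine verifications in the natural way (closure of non-decreasing right-continuous maps under composition and addition, induction on $n$ for the $a$-dependence, and the one-step inequality $T_{f,a}(x)\ge x$ propagated by monotonicity of the iterates). The only point worth flagging is interpretive rather than mathematical: your caveat ``strictly increasing wherever finite'' is precisely right, and indeed necessary, since for degenerate $f$ (e.g.\ $f\equiv\pm\infty$) one has $T_{f,a}^{[n]}(x)\equiv\pm\infty$ and strict increase in $a$ fails trivially---so the word ``increasing'' in the statement should be read with that understanding.
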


To utilize Proposition \ref{prop3g} for the $d_{\sf L}$-approximation
problem, let $f=F_{\mu}$ with $\mu \in \cP$. Then  $\lt(T_{F_{\mu},
  a}^{[k]}(0)\rt)$ is non-decreasing; in fact, $\lim_{k\to
  \infty} T_{F_{\mu},a}^{[k]}(0) = a + 1$. On the other hand, given $n\in \N$, clearly
$T_{F_{\mu}, a}^{[n]}(0)\ge 1$ for all
$a\ge1$, and hence
$$
{\sf L}_{\bullet}^{\bullet, n} := \min \lt\{a \ge 0 : T_{F_{\mu},
  a}^{[n]}(0) \ge 1\rt\} < +\infty \, .
$$
Note that ${\sf L}_{\bullet}^{\bullet, n}$ only depends on $\mu$ and
$n$. The sequence $\lt({\sf L}_{\bullet}^{\bullet, n}\rt)$ is
non-increasing, and $n{\sf L}_{\bullet}^{\bullet,n}\le \frac12$ for every
$n$. Also, ${\sf L}_{\bullet}^{\bullet, n}=0$ if and only if $\#
\mbox{\rm supp}\, \mu \le n$.

For a concrete example, consider $\mu = \B_b$ with $a < \frac12 (b-1)$, where
$$
T_{F_{\mu}, a}(x) = \left\{
\begin{array}{ll}
a & \mbox{\rm if } x< - a \, , \\
a + \log_b (b^{x+a} + 2a) & \mbox{\rm if } -a \le
x < -a + \log_b(b-2a) \, , \\
a + 1 & \mbox{\rm if } x\ge - a + \log_b (b-2a) \, ,
\end{array}
\right.
$$
from which it is easily deduced that ${\sf L}_{\bullet}^{\bullet, n}$
is the unique solution of
\begin{equation}\label{eq3105}
b^{2n L } = \frac{2L  + b (b^L - b^{-L})}{2 L + b^L - b^{- L}} \, .
\end{equation}
As the following result shows, the quantity ${\sf
  L}^{\bullet,n}_{\bullet}$ always plays a central role in identifying
best (unconstrained) $d_{\sf L}$-approximations of a given $\mu \in \cP$.

\begin{theorem}\label{thm3h}
Let $\mu \in \cP$ and $n\in \N$. There exists a best $d_{\sf
  L}$-approximation of $\mu$, and $d_{\sf L} \lt(\mu, \delta_{\bullet}^{\bullet, n}\rt) =
\omega {\sf L}_{\bullet}^{\bullet, n}$. Moreover, for every $x\in \Xi_n$ and $p\in \Pi_n$,
the following are equivalent:
\noindent\begin{enumerate}
\item[{\rm(i)}] $d_{\sf L} \lt(\mu, \delta_x^p \rt) = d_{\sf L}
  \lt(\mu, \delta_{\bullet}^{\bullet, n}\rt)$;
\item[{\rm (ii)}] all implications in {\rm (\ref{eq3103})} are valid with ${\sf
    L}^{\bullet}(x)$ replaced by ${\sf L}^{\bullet, n}_{\bullet}$;
\item[{\rm (iii)}] all implications in {\rm (\ref{eq3104})} are valid with ${\sf
    L}_{\bullet}(p)$ replaced by ${\sf L}^{\bullet, n}_{\bullet}$.
\end{enumerate}
\end{theorem}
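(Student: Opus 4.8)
The crux of the argument is the identity $\inf_{p\in\Pi_n}{\sf L}_{\bullet}(p)={\sf L}_{\bullet}^{\bullet,n}$ together with the fact that this infimum is attained; granted these, the stated value of the $n$-th quantization error, the existence of $\delta_{\bullet}^{\bullet,n}$, and the three equivalences all follow by routine bookkeeping with Lemma \ref{lem3d}, Theorem \ref{thm3e}, and Proposition \ref{prop3f}. The one genuinely substantial step is a \emph{one-step recursion} identifying, for a prescribed tolerance $a\ge 0$, the largest admissible partial sum $P_{,j}$ given $P_{,j-1}$ as exactly $T_{F_{\mu},a}(P_{,j-1})$. This is where I expect the main obstacle: one must manipulate one-sided limits and the generalized-inverse identities of Proposition \ref{prop3a} with care, and handle the degenerate cases $P_{,j-1}=P_{,j}$ separately.

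\textbf{Step 1 (recursion and identity).} Fix $a\ge 0$ and $p\in\Pi_n$; I claim ${\sf L}_{\bullet}(p)\le a$ iff $P_{,j}\le T_{F_{\mu},a}(P_{,j-1})$ for all $j=1,\ldots,n$. Since the set of $y$ satisfying the inequality defining $\ell_{f,I}^{*}$ in Proposition \ref{prop3c} is an interval $[\ell_{f,I}^{*},+\infty[\,$, the condition $\ell_{F_{\mu}^{-1},[P_{,j-1},P_{,j}]}^{*}\le a$ is equivalent to $(F_{\mu}^{-1})_{-}(P_{,j}-a)-a\le (F_{\mu}^{-1})_{+}(P_{,j-1}+a)+a$; by right-continuity of $F_{\mu}^{-1}$ (Proposition \ref{prop3a}) this reads $(F_{\mu}^{-1})_{-}(P_{,j}-a)\le F_{\mu}^{-1}(P_{,j-1}+a)+2a=:w$. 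As $(F_{\mu}^{-1})_{-}$ is left-continuous and non-decreasing with generalized inverse $((F_{\mu}^{-1})_{-})^{-1}=(F_{\mu}^{-1})^{-1}=(F_{\mu})_{+}=F_{\mu}$ (Proposition \ref{prop3a} and right-continuity of $F_{\mu}$), the last inequality is in turn equivalent to $P_{,j}-a\le F_{\mu}(w)$, i.e.\ to $P_{,j}\le F_{\mu}\bigl(F_{\mu}^{-1}(P_{,j-1}+a)+2a\bigr)+a=T_{F_{\mu},a}(P_{,j-1})$; when $P_{,j-1}=P_{,j}$ both sides of the claimed equivalence hold trivially, so the degenerate case is harmless. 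Consequently, if ${\sf L}_{\bullet}(p)\le a$ then, iterating and using that $T_{F_{\mu},a}$ is non-decreasing (Proposition \ref{prop3g}) with $P_{,0}=0$, one gets $P_{,j}\le T_{F_{\mu},a}^{[j]}(0)$ for all $j$, so $1=P_{,n}\le T_{F_{\mu},a}^{[n]}(0)$ and hence $a\ge{\sf L}_{\bullet}^{\bullet,n}$; thus ${\sf L}_{\bullet}(p)\ge{\sf L}_{\bullet}^{\bullet,n}$ for every $p$. Conversely, with $a:={\sf L}_{\bullet}^{\bullet,n}$, set $P_{,0}:=0$ and $P_{,j}:=\min\{T_{F_{\mu},a}^{[j]}(0),1\}$ for $j=1,\ldots,n$. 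By Proposition \ref{prop3g} the sequence $\bigl(T_{F_{\mu},a}^{[k]}(0)\bigr)$ is non-decreasing (as $0\le a+F_{\mu}(+\infty)$), so $(P_{,j})$ is non-decreasing, $P_{,n}=1$ because $T_{F_{\mu},a}^{[n]}(0)\ge 1$, and the associated $p$ lies in $\Pi_n$; a direct check (trivial when $P_{,j-1}=1$, and otherwise immediate from $P_{,j}=\min\{T_{F_{\mu},a}(P_{,j-1}),1\}$) gives $P_{,j}\le T_{F_{\mu},a}(P_{,j-1})$ for all $j$, hence ${\sf L}_{\bullet}(p)\le a$. Therefore $\inf_{p\in\Pi_n}{\sf L}_{\bullet}(p)={\sf L}_{\bullet}^{\bullet,n}$, attained at this $p$.

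\textbf{Step 2 (existence and the value).} For the minimizing $p$ from Step 1, Proposition \ref{prop3f} yields a best $d_{\sf L}$-approximation $\delta_{\bullet}^{p}$ of $\mu$ given $p$, with $d_{\sf L}(\mu,\delta_{\bullet}^{p})=\omega{\sf L}_{\bullet}(p)=\omega{\sf L}_{\bullet}^{\bullet,n}$. On the other hand, for every $x\in\Xi_n$, $q\in\Pi_n$, the inequality underlying Proposition \ref{prop3f} --- the analogue, obtained via the second equality in (\ref{eq3102}) and the definition of $\ell^{*}$, of the first displayed inequality in the proof of Theorem \ref{thm3e} --- gives $d_{\sf L}(\mu,\delta_{x}^{q})\ge\omega{\sf L}_{\bullet}(q)\ge\omega{\sf L}_{\bullet}^{\bullet,n}$. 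Since every member of $\cP$ supported on at most $n$ atoms equals some $\delta_{x}^{q}$, it follows that $\delta_{\bullet}^{p}$ is a best $d_{\sf L}$-approximation of $\mu$ and $d_{\sf L}(\mu,\delta_{\bullet}^{\bullet,n})=\omega{\sf L}_{\bullet}^{\bullet,n}$. In particular $\omega{\sf L}^{\bullet}(x)=\min_{q}d_{\sf L}(\mu,\delta_{x}^{q})\ge\omega{\sf L}_{\bullet}^{\bullet,n}$ for every $x\in\Xi_n$ by Theorem \ref{thm3e}.

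\textbf{Step 3 (the equivalences).} For (i)$\Rightarrow$(ii): if $d_{\sf L}(\mu,\delta_{x}^{p})=\omega{\sf L}_{\bullet}^{\bullet,n}$, then $\omega{\sf L}_{\bullet}^{\bullet,n}=d_{\sf L}(\mu,\delta_{x}^{p})\ge\omega{\sf L}^{\bullet}(x)\ge\omega{\sf L}_{\bullet}^{\bullet,n}$ forces ${\sf L}^{\bullet}(x)={\sf L}_{\bullet}^{\bullet,n}$ and $d_{\sf L}(\mu,\delta_{x}^{p})=\omega{\sf L}^{\bullet}(x)$, so Theorem \ref{thm3e} yields (\ref{eq3103}), which after substituting ${\sf L}^{\bullet}(x)={\sf L}_{\bullet}^{\bullet,n}$ is precisely (ii). For (ii)$\Rightarrow$(i): exactly as in the proof of Lemma \ref{lem3d}, validity of the implications in (\ref{eq3103}) with ${\sf L}^{\bullet}(x)$ replaced by ${\sf L}_{\bullet}^{\bullet,n}$ forces $\ell_{F_{\mu},[x_{,j},x_{,j+1}]}(P_{,j})\le{\sf L}_{\bullet}^{\bullet,n}$ for \emph{all} $j=0,\ldots,n$, so the first equality in (\ref{eq3102}) gives $d_{\sf L}(\mu,\delta_{x}^{p})\le\omega{\sf L}_{\bullet}^{\bullet,n}=d_{\sf L}(\mu,\delta_{\bullet}^{\bullet,n})$, and the reverse inequality holds automatically. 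The equivalence (i)$\Leftrightarrow$(iii) is obtained verbatim, with $F_{\mu}^{-1}$, (\ref{eq3104}), ${\sf L}_{\bullet}(p)$, the second equality in (\ref{eq3102}), and Proposition \ref{prop3f} taking the roles of $F_{\mu}$, (\ref{eq3103}), ${\sf L}^{\bullet}(x)$, the first equality in (\ref{eq3102}), and Theorem \ref{thm3e}.
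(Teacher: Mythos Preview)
Your proof is correct and follows essentially the same route as the paper's: both hinge on the one-step recursion $P_{,j}\le T_{F_{\mu},a}(P_{,j-1})$ being equivalent to $\ell^{*}_{F_{\mu}^{-1},[P_{,j-1},P_{,j}]}\le a$, iterate it to identify $\inf_{p}{\sf L}_{\bullet}(p)$ with ${\sf L}_{\bullet}^{\bullet,n}$, and then read off the equivalences from Lemma~\ref{lem3d}, Theorem~\ref{thm3e}, and Proposition~\ref{prop3f}. The one organizational difference is that the paper first obtains existence of $\delta_{\bullet}^{\bullet,n}$ via compactness of $\{\nu:\#\,\mathrm{supp}\,\nu\le n\}$ and continuity of $d_{\sf L}$, and only afterwards derives the recursion (in one direction) to compute the value, whereas you prove the recursion as a clean ``iff'' first and then use it constructively (defining $P_{,j}=\min\{T_{F_{\mu},a}^{[j]}(0),1\}$) to produce a minimizing $p$ directly, bypassing the compactness argument; this is slightly more self-contained but otherwise equivalent.
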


\begin{proof}
To see that best $d_{\sf L}$-approximations of $\mu$ do exist, simply
note that the set $\{\nu \in \cP: \# \mbox{\rm supp}\, \nu \le n\}$ is
compact, and the function $\nu \mapsto d_{\sf L} (\mu, \nu)$ is
continuous, hence attains a minimal value for some $\nu = \delta_x^p$
with $x\in \Xi_n$ and $p\in \Pi_n$. Clearly, any such $\delta_x^p$ also is a best
approximation of $\mu$, given $p$. By Proposition \ref{prop3f},
therefore, $d_{\sf L}(\mu, \delta_x^p) = \omega {\sf
  L}_{\bullet}(p)$, as well as
$$
F_{\mu-}^{-1} \bigl( P_{,j} - {\sf L}_{\bullet}(p)\bigr) -  {\sf
  L}_{\bullet}(p) \le x_{,j} \le F_{\mu}^{-1} \bigl( P_{,j-1} +  {\sf
  L}_{\bullet}(p)\bigr) +  {\sf L}_{\bullet}(p)
$$ whenever $P_{,j-1}<P_{,j}$, and indeed for every $j=1,\ldots , n$.
It follows that $P_{,j} \le T_{F_{\mu},  {\sf  L}_{\bullet}(p)}(P_{,j-1})$ for all $j$, and hence $1= P_{,n} \le
T_{F_{\mu},  {\sf  L}_{\bullet}(p)}^{[n]}(0)$, that is, ${\sf
    L}_{\bullet}^{\bullet, n} \le {\sf
    L}_{\bullet}(p)$. This shows that $d_{\sf L} (\mu, \delta_x^p) \ge
  \omega {\sf L}_{\bullet}^{\bullet, n}$. To establish the
  reverse inequality, let
$$
m = \min \left\{ i\ge 1 :  T_{F_{\mu},  {\sf  L}_{\bullet}^{\bullet, n}}^{[i]}(0) \ge 1\right\} \, .
$$
Clearly, $1\le m \le n$, and $ {\sf  L}_{\bullet}^{\bullet, m} =  {\sf
  L}_{\bullet}^{\bullet, n}$. Define $q\in \Pi_m$ via
$$
Q_{,i} =  T_{F_{\mu},  {\sf  L}_{\bullet}^{\bullet, n}}^{[i]}(0)
\quad \forall\ i =1, \ldots , m-1 \, .
$$
Note that $i\mapsto Q_{,i}$ is non-decreasing, and $0\le Q_{,i}\le1$,
so $q$ is well-defined. Also, consider $y\in \Xi_m$ with
$$
y_{,i} = \frac12 \bigl(  F_{\mu-}^{-1} (Q_{,i} -{\sf
  L}_{\bullet}^{\bullet, m} ) + F_{\mu}^{-1} (Q_{,i-1} + {\sf
  L}_{\bullet}^{\bullet, m})\bigr) \quad \forall\ i=1, \ldots , m \, .
$$
By the definitions of ${\sf L}_{\bullet}^{\bullet, m}$, $q$, and $y$,
$$
\ell_{F_{\mu}^{-1}, [Q_{,i-1}, Q_{,i}]} (y_{,i}) \le {\sf
  L}_{\bullet}^{\bullet, m} \quad \forall i = 1, \ldots , m \, ,
$$
and hence
$$
d_{\sf L} \lt(\mu, \delta_x^p\rt) \le d_{\sf L} \lt(\mu, \delta_y^q\rt) =
\omega\max\nolimits_{i=1}^n \ell_{F_{\mu}^{-1}, [Q_{,i-1}, Q_{,i}]} (y_{,i}) \le \omega {\sf
  L}_{\bullet}^{\bullet, m} =  \omega {\sf
  L}_{\bullet}^{\bullet, n} \, .
$$
This shows that indeed $d_{\sf L}\lt(\mu, \delta_x^p\rt) = \omega {\sf
  L}_{\bullet}^{\bullet, n}$ and also proves
(i)$\Rightarrow$(iii). The implication (i)$\Rightarrow$(ii) follows by
a similar argument. That, conversely, either of (ii) and (iii) implies
(i) is evident from (\ref{eq3102}), together with the fact that, as
seen in the proof of Lemma~\ref{lem3d} above, validity of \eqref{eq3103} and
\eqref{eq3104} implies $\max_{j=0}^n\ell_{F_{\mu}, [x_{,j}, x_{,j+1}]}
(P_{,j}) \le {\sf L}^{\bullet} (x)$ and
$\max_{j=1}^n\ell_{F_{\mu}^{-1}, \lt[P_{,j-1}, P_{,j}\rt]} (x_{,j})
\le {\sf L}_{\bullet} (p)$, respectively. 
\end{proof}

\rb
(i) The above proof of Theorem \ref{thm3h} shows that in fact
$$
{\sf L}_{\bullet}^{\bullet, n} = \min\nolimits_{x\in \Xi_n} {\sf
  L}^{\bullet} (x) =  \min\nolimits_{p\in \Pi_n} {\sf
  L}_{\bullet} (p) \, .
$$
(ii) Theorem \ref{thm3h} is similar to classical one-dimensional quantization results
as presented, e.g., in \cite[Sec.5.2]{GL}. What makes the theorem (and
its analogue, Theorem~\ref{bestK} in Section 5) particularly appealing
is that its conditions (ii) and (iii) not only are necessary for
optimality, but also sufficient. By contrast, it is well known that
sufficient conditions for best $d_*$-approximations may be hard to come
by in general; see, e.g., \cite[Sec.4.1]{GL}, and also Proposition~\ref{prop4a}(iii) below,
regarding the case of $* =1$.
\re

When specialized to $\mu = \B_b$, Theorem \ref{thm3h} yields the best
finitely supported $d_{\sf L}$-approxima\-ti\-ons of Benford's Law.

\begin{cor}\label{coL}
Let $b>1$ and $n\in \N$. Then the best $d_{\sf L}$-approximation of\, $\B_b$
is $\delta_x^p$, with
\begin{align*}
x_{,j}& = b^{(2j-1)L} + 2L \frac{b^{2jL}-1}{b^{2L} -1} - L = b^{P_{,j} - L} - L \, ,\\
P_{,j} & = \frac1{\log b} \log \left( b^{(2j-1) L} + 2L \frac{b^{2jL}-1}{b^{2L} -1}
\right) + L = \frac{ \log (x_{,j} + L)}{\log b} + L
\, ,
\end{align*}
for all $j=1, \ldots, n$, where $L$ is the unique solution of
{\rm (\ref{eq3105})}; in particular, $\#{\rm supp}\, \delta_{\bullet}^{\bullet,n}=n$. Moreover, $d_{\sf L}\lt(\B_b,
\delta_{\bullet}^{\bullet, n}\rt)=\omega_bL$, and
$$
\lim\nolimits_{n\to \infty} n d_{\sf L} \lt(\B_b, \delta_{\bullet}^{\bullet,
  n}\rt)  = \frac{\max\{b,2\}-1}{2b-2}\cdot \frac{\log (1 + b \log b)
  -\log (1+\log b)}{\log b} \, .
$$
\end{cor}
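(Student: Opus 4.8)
The plan is to apply Theorem~\ref{thm3h} with $\mu = \B_b$, so the whole corollary reduces to computing the quantity ${\sf L}_{\bullet}^{\bullet,n}$ explicitly for this particular measure and then extracting the asymptotics. The paper has already done most of the spadework: it records that for $\mu = \B_b$ the map $T_{F_{\mu},a}$ has the explicit three-branch form displayed just before Theorem~\ref{thm3h}, and that consequently ${\sf L}_{\bullet}^{\bullet,n}$ is the unique solution $L$ of the equation~(\ref{eq3105}), namely $b^{2nL} = \bigl(2L + b(b^{L}-b^{-L})\bigr)/\bigl(2L + b^{L}-b^{-L}\bigr)$. So the first step is simply to invoke Theorem~\ref{thm3h}: there is a best $d_{\sf L}$-approximation, and $d_{\sf L}(\B_b,\delta_{\bullet}^{\bullet,n}) = \omega_b L$ with $\omega_b = (\max\{b,2\}-1)/(b-1)$ as noted in the text.

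Next I would pin down the minimizer $\delta_x^p$. Condition~(iii) of Theorem~\ref{thm3h} says exactly that $\ell_{F_{\mu}^{-1},[P_{,j-1},P_{,j}]}(x_{,j}) \le {\sf L}_{\bullet}^{\bullet,n}$ for all $j$, which by Proposition~\ref{prop3c} (applied to $f = F_{\B_b}^{-1}$) forces each consecutive gap $P_{,j}-P_{,j-1}$ to be as small as the constraint $2L = b^{P_{,j}-L} - b^{P_{,j-1}+L}$ permits; chaining these $n$ equalities with $P_{,0}=0$ and the requirement $P_{,n}=1$ produces a geometric-type recursion for $b^{P_{,j}}$ that one solves in closed form. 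Running the recursion gives $b^{P_{,j}} = b^{L}\bigl(b^{(2j-1)L} + 2L\,(b^{2jL}-1)/(b^{2L}-1)\bigr)$, i.e. the stated formula for $P_{,j}$ after taking $\log_b$ and subtracting; and then $x_{,j} = F_{\B_b-}^{-1}(P_{,j}-L) - L$ collapses (since $F_{\B_b}^{-1}(t) = b^{t}$) to $x_{,j} = b^{P_{,j}-L} - L$, which after substituting the $P_{,j}$ formula is precisely the displayed expression. One also checks that $P_{,n}=1$ is exactly equation~(\ref{eq3105}), confirming consistency, and that the $x_{,j}$ are strictly increasing, giving $\#\,\mathrm{supp}\,\delta_{\bullet}^{\bullet,n} = n$.

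Finally, for the limit I would analyze~(\ref{eq3105}) asymptotically. Since $(n{\sf L}_{\bullet}^{\bullet,n})$ is bounded by $\tfrac12$ and the left side $b^{2nL}$ stays bounded only if $L = O(1/n)$, write $L = c_n/n$; expanding $b^{L}-b^{-L} = 2L\log b + O(L^{3})$ on the right and letting $n\to\infty$ turns~(\ref{eq3105}) into $b^{2c} = (1 + b\log b)/(1+\log b)$, so $2c = (\log(1+b\log b) - \log(1+\log b))/\log b$, whence $\lim_n n L = c = \tfrac12(\log(1+b\log b)-\log(1+\log b))/\log b$. Multiplying by $\omega_b = (\max\{b,2\}-1)/(b-1)$ gives the claimed value of $\lim_n n\,d_{\sf L}(\B_b,\delta_{\bullet}^{\bullet,n})$. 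The only genuinely delicate point is justifying the asymptotic expansion rigorously — one needs a priori bounds showing $L$ is neither too small nor too large (both follow from monotonicity of $n{\sf L}_{\bullet}^{\bullet,n}$ and from plugging test values into~(\ref{eq3105})) so that the error terms in the Taylor expansion are uniformly $o(1/n)$; everything else is bookkeeping with the explicit formulas. The solving of the recursion for $P_{,j}$ is the most calculation-heavy step but is routine, a linear recursion in $b^{P_{,j}}$.
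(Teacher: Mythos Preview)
Your proposal is correct and follows essentially the approach the paper sets up: the corollary is stated without proof, but the paper has already computed $T_{F_{\B_b},a}$ explicitly and identified ${\sf L}_{\bullet}^{\bullet,n}$ as the solution $L$ of~(\ref{eq3105}), so applying Theorem~\ref{thm3h} and solving the linear recursion $b^{P_{,j}} = b^{2L}b^{P_{,j-1}} + 2Lb^{L}$ (equivalently $P_{,j}=T_{F_{\B_b},L}^{[j]}(0)$) is exactly the intended route, and your asymptotic expansion of~(\ref{eq3105}) is the standard computation.

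One point you should tighten: the corollary asserts \emph{uniqueness} (``the best $d_{\sf L}$-approximation''), and your phrase ``forces each consecutive gap \ldots\ to be as small as the constraint permits'' glosses over why condition~(iii) pins down $P_{,j}$ exactly. The clean argument is that condition~(iii) gives $P_{,j}\le T_{F_{\B_b},L}(P_{,j-1})$ for every $j$, so inductively $P_{,j}\le T_{F_{\B_b},L}^{[j]}(0)$; since $T_{F_{\B_b},L}$ is \emph{strictly} increasing on the relevant range and $P_{,n}=1=T_{F_{\B_b},L}^{[n]}(0)$ (the latter equality coming from continuity of $a\mapsto T_{F_{\B_b},a}^{[n]}(0)$ at $a=L$), any strict inequality at some step would propagate to $P_{,n}<1$, a contradiction. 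Then each $\ell^{*}_{F_{\B_b}^{-1},[P_{,j-1},P_{,j}]}=L$ exactly, and since $F_{\B_b}^{-1}$ is strictly increasing the minimizer $x_{,j}$ of $\ell_{F_{\B_b}^{-1},[P_{,j-1},P_{,j}]}$ is unique, namely $x_{,j}=b^{P_{,j}-L}-L=b^{P_{,j-1}+L}+L$. With that addition your proof is complete.
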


To compare  this to Corollary \ref{coLU}, note that $P_{,j}\not
\equiv j/n$ whenever $n\ge 2$,
and then the $n$-th quantization error $d_{\sf L} \lt(\B_b,
\delta_{\bullet}^{\bullet, n}\rt)$ is smaller than the $n$-th 
{\em uniform\/} quantization error $d_{\sf L} \lt(\B_b,
\delta_{\bullet}^{u_n}\rt)$. The $d_{\sf
  L}$-quantization coefficient of $\B_b$ also is smaller than its uniform
counterpart, since
$$
\frac{\log (1 + b \log b)  -\log (1+\log b)}{\log b}  < \frac{b\log b}{1 + b\log
  b} \quad \forall\ b > 1 \, .
$$

\begin{figure}[ht]\label{fig1}
\psfrag{tleg}[l]{\small $b=10, n=3$}
\psfrag{tdu}[l]{\small $d_{\sf L}(\beta_{10},
  \delta_{\bullet}^{u_3})= {\sf L}_{\bullet}(u_3)=1.566\cdot 10^{-1}$}
\psfrag{td}[l]{\small $d_{\sf L}(\beta_{10},
  \delta_{\bullet}^{\bullet,3})={\sf L}_{\bullet}^{\bullet, 3}
  =1.439\cdot 10^{-1}$}
\psfrag{th1}[]{\small $1$}
\psfrag{thx}[]{\small $x$}
\psfrag{tff}[]{\small $F_{\beta_{10}}(x)$}
\psfrag{th10}[]{\small $10$}
\psfrag{tv0}[]{\small $0$}
\psfrag{tv13}[]{\small $\frac13$}
\psfrag{tv23}[]{\small $\frac23$}
\psfrag{tv1}[]{\small $1$}
\begin{center}
\includegraphics[height=7.0cm]{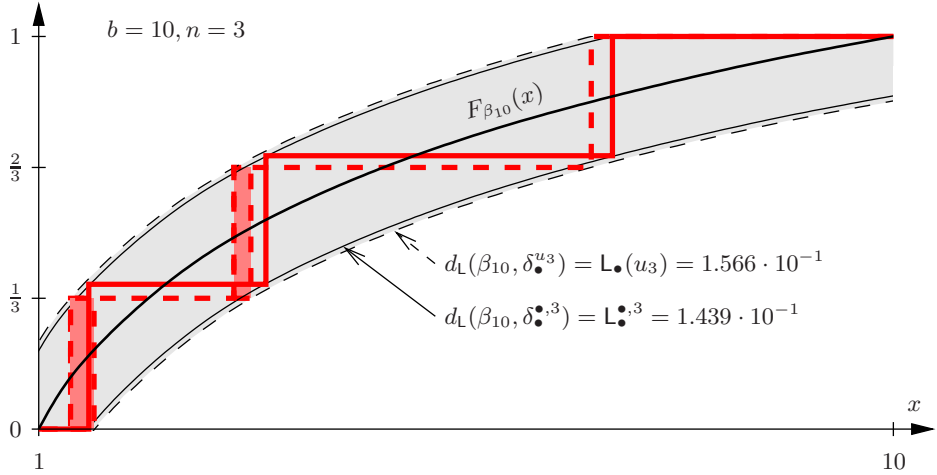}
\end{center}
\vspace*{-6mm}
\caption{The best $d_{\sf L}$-approximation (solid red line) of
  $\beta_{10}$ is unique, whereas best uniform $d_{\sf
  L}$-approximations (broken red lines) are not; see Corollaries
\ref{coL} and \ref{coLU}, respectively.}
\end{figure}

\begin{example}\label{ex310a}
For $\mu = \mbox{\tt Beta}(2,1)$, Theorem \ref{thm3h} yields a unique
best $d_{\sf L}$-approxi\-ma\-ti\-on. Although the equation determining
${\sf L}_{\bullet}^{\bullet, n}$ is less transparent than
(\ref{eq3105}), it can be shown that
$\lim_{n\to \infty} n d_{\sf L} (\mu, \delta_{\bullet}^{\bullet, n}) =
\frac14 (2 - \log 3)< \frac14$.
\end{example}

\begin{example}\label{ex310b}
For the inverse Cantor distribution, a best
$d_{\sf L}$-appro\-xi\-ma\-ti\-on exists by Theorem \ref{thm3h}, and utilizing
the self-similarity of $F_{\mu}^{-1}$, it is possible to derive
estimates such as
\begin{equation}\label{estil}
\frac{1}{216}\le  n^{\log3/ \log 2}d_{\sf
  L}\lt(\mu,\delta_{\bullet}^{\bullet,n}\rt) \le 3 \quad \forall n \in
\N \, , 
\end{equation}
which shows that $\bigl( d_{\sf L} (\mu, \delta_{\bullet}^{u_n})\bigr)$ decays
like $(n^{-\log 3/ \log 2})$, and hence faster than in the case of $\B_b$ and
$\mbox{\tt Beta}(2,1)$.
\end{example}

\section{Kantorovich approximations}\label{secr}

This section studies best finitely supported $d_r$-approximations of
Benford's Law. Mostly, the results are special cases of more
general facts taken from the authors' comprehensive study on
$d_r$-approximations \cite{XB}.

\subsection{$d_1$-approximations}

With $d_{\sf L}$ replaced by $d_1$, the main results of the previous
section have the following analogues, stated here for the reader's
convenience; see \cite[Sec.5]{XB} for details.

\begin{prp}\label{prop4a}
Let $\mu \in \cP$ and $n\in \N$.
\noindent\begin{enumerate}
\item[{\rm(i)}] For every $x\in \Xi_n$, there exists a best $d_1$-approximation
  of $\mu$, given $x$. Moreover, $d_1\lt(\mu, \delta_x^p\rt)= d_1\lt(\mu,
  \delta_x^{\bullet}\rt)$ if and only if, for every $j=0,\ldots, n$,
\begin{equation}\label{eq4101}
x_{,j} < x_{,j+1} \enspace \Longrightarrow \enspace
F_{\mu-} \lt( {\textstyle \frac12} (x_{,j} + x_{,j+1} )\rt) \le
P_{,j} \le  F_{\mu} \lt( {\textstyle \frac12} (x_{,j} +
x_{,j+1})\rt)  \, .
\end{equation}
\item[{\rm (ii)}] For every $p\in \Pi_n$, there exists a best $d_1$-approximation
  of $\mu$, given $p$. Moreover, $d_1\lt(\mu, \delta_x^p\rt)= d_1\lt(\mu,
  \delta_{\bullet}^p\rt)$ if and only if, for every $j=1,\ldots, n$,
\begin{equation}\label{eq4102}
P_{,j-1} < P_{,j} \enspace  \Longrightarrow \enspace
F^{-1}_{\mu-} \lt( {\textstyle \frac12} (P_{,j-1} + P_{,j})\rt)
\le x_{,j} \le  F^{-1}_{\mu} \lt( {\textstyle \frac12} (P_{,j-1} +
P_{,j} )\rt) \, .
\end{equation}
\item[{\rm (iii)}] There exists a best $d_1$-approximation of $\mu$, and if
  $d_{1}\lt(\mu, \delta_x^p\rt) = d_1\lt(\mu, \delta_{\bullet}^{\bullet,n}\rt)$
  then {\rm (\ref{eq4101})} and {\rm (\ref{eq4102})} are valid for every $j=1,\ldots , n$.
\end{enumerate}
\end{prp}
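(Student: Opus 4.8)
The plan is to derive all three parts from one observation: for $\nu = \delta_x^p$ the distribution function $F_{\delta_x^p}$ is the step function equal to $P_{,j}$ on $[x_{,j},x_{,j+1}[$, and the quantile function $F_{\delta_x^p}^{-1}$ is the step function equal to $x_{,j}$ on $[P_{,j-1},P_{,j}[$. Combined with the identity $d_1(\mu,\nu)=\lambda(\I)^{-1}\int_\R|F_\mu-F_\nu| = \lambda(\I)^{-1}\|F_\mu^{-1}-F_\nu^{-1}\|_1$ recorded above, this turns each minimization into a sum of decoupled one-dimensional ``$L^1$-median'' problems. The single lemma doing all the work is: for non-decreasing $f$ and $a<b$, the map $c\mapsto\int_a^b|f(t)-c|\,{\rm d}t$ is convex with one-sided derivatives $\lambda\{t\in(a,b):f(t)<c\}-\lambda\{t\in(a,b):f(t)>c\}$, and since $\{f<c\}$ is a left- and $\{f>c\}$ a right-subinterval of $(a,b)$, it is minimized over $c$ precisely when $c\in[f_-(m),f_+(m)]$, $m=\tfrac12(a+b)$.

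For (i) I would fix $x\in\Xi_n$ and write $\lambda(\I)\,d_1(\mu,\delta_x^p)=\sum_{j=0}^n\int_{x_{,j}}^{x_{,j+1}}|F_\mu(t)-P_{,j}|\,{\rm d}t$, a sum in which only $P_{,1},\dots,P_{,n-1}$ are free (as $P_{,0}=0$, $P_{,n}=1$). Applying the lemma with $f=F_\mu$ on $[x_{,j},x_{,j+1}]$ shows the $j$-th summand is minimized exactly when $F_{\mu-}(m_j)\le P_{,j}\le F_\mu(m_j)$, $m_j:=\tfrac12(x_{,j}+x_{,j+1})$ --- which is \eqref{eq4101}; when $x_{,j}=x_{,j+1}$ the summand vanishes and imposes nothing, and $j=0,n$ are automatic given the conventions $x_{,0}=-\infty$, $x_{,n+1}=+\infty$, $P_{,0}=0$, $P_{,n}=1$. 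Thus \eqref{eq4101} is sufficient for optimality given $x$. For the converse and for existence, note $m_j$ is non-decreasing in $j$, so the target intervals $[F_{\mu-}(m_j),F_\mu(m_j)]$ are ordered (if $m_j<m_{j'}$ then $F_\mu(m_j)\le F_{\mu-}(m_{j'})$), hence admit a non-decreasing selection $P_{,j}$; the resulting $p\in\Pi_n$ minimizes every summand simultaneously, so $\min$ of the sum $=$ sum of the $\min$s, and therefore any optimal $p$ must make each summand individually minimal, i.e.\ satisfy \eqref{eq4101}.

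Part (ii) is the mirror image under $F_\mu\leftrightarrow F_\mu^{-1}$, $x\leftrightarrow P$: for fixed $p$, $\lambda(\I)\,d_1(\mu,\delta_x^p)=\sum_{j=1}^n\int_{P_{,j-1}}^{P_{,j}}|F_\mu^{-1}(y)-x_{,j}|\,{\rm d}y$ with the $x_{,j}$ now fully decoupled; the same lemma applied to $f=F_\mu^{-1}$ (right-continuous, so $f_+=F_\mu^{-1}$, $f_-=F_{\mu-}^{-1}$) gives \eqref{eq4102}, and monotonicity of $m_j:=\tfrac12(P_{,j-1}+P_{,j})$ again produces a non-decreasing --- hence admissible, $x\in\Xi_n$ --- selection, yielding existence and the ``only if'' direction (or one simply quotes \cite[Sec.5]{XB}). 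For (iii): $\{\nu\in\cP:\#\,\mathrm{supp}\,\nu\le n\}$ is the image of the compact $\Xi_n\times\Pi_n$ under the weakly continuous map $(x,p)\mapsto\delta_x^p$, hence compact, and $\nu\mapsto d_1(\mu,\nu)$ is continuous, so a global minimizer $\delta_{\bullet}^{\bullet,n}$ exists; any minimizer $\delta_x^p$ is in particular a best $d_1$-approximation of $\mu$ given $x$ and given $p$, so \eqref{eq4101} and \eqref{eq4102} hold for every $j=1,\dots,n$ by (i) and (ii). One must not attempt the converse here: simultaneous optimality for fixed $x$ and for fixed $p$ is strictly weaker than global optimality, cf.\ the remark following Theorem~\ref{thm3h} and \cite[Sec.4.1]{GL}.

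The hard part lies entirely in the $L^1$-median lemma plus the monotone-selection step: pinning down the minimizer set as the \emph{closed} interval $[f_-(m),f_+(m)]$ (getting the one-sided limits right at jumps of $f$, rather than being off by an endpoint), and checking that for non-decreasing $f$ the family $\{[f_-(m_j),f_+(m_j)]\}_j$ with $m_1\le\cdots$ is ordered well enough to admit a non-decreasing selection. Everything else --- the step-function forms of $F_{\delta_x^p}$ and $F_{\delta_x^p}^{-1}$, the decomposition of the integrals, and the handling of degenerate terms and of the endpoint conventions --- is routine bookkeeping.
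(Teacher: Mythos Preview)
Your argument is correct. The paper itself does not prove Proposition~\ref{prop4a}; it merely states the result ``for the reader's convenience'' and refers to \cite[Sec.~5]{XB} for the proof. What you have written is precisely the natural self-contained argument one would expect to find there: decompose $d_1$ as a sum of integrals over $[x_{,j},x_{,j+1}]$ (or $[P_{,j-1},P_{,j}]$), identify each summand's minimizer set via the $L^1$-median lemma for monotone integrands, and use monotonicity of the midpoints $m_j$ to produce an admissible (i.e., monotone) selection that minimizes all summands at once --- thereby forcing \emph{every} minimizer to minimize each summand individually and yielding both existence and the ``if and only if''. Your handling of the degenerate summands ($x_{,j}=x_{,j+1}$ or $p_{,j}=0$), of the endpoint conventions ($j=0,n$), and of part~(iii) via compactness is also correct. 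One small point of phrasing: in part~(ii) the $x_{,j}$ are decoupled in the objective but still coupled through the order constraint $x\in\Xi_n$; you do address this via the monotone-selection step, so the argument is complete, but ``fully decoupled'' slightly overstates it.
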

\rb
Though the phrasing of Proposition~\ref{prop4a} emphasizes its analogy
to Theorem~\ref{thm3e} (and also to Theorem~\ref{thloc-K} below),
there nevertheless is a subtle difference: While in \eqref{eq3103}
and \eqref{lK} it can equivalently be stipulated that, respectively,
$\ell_{F_{\mu}, [x_{,j}, x_{,j+1}]} (P_{,j}) \le {\sf L}^{\bullet}
(x)$ and
$F_{\mu-}\lt(x_{,j+1}\rt)-{\sf K}^{\bullet}(x)\le P_{,j}\le
F_{\mu}\lt(x_{,j}\rt)+{\sf K}^{\bullet}(x)$ for {\em all}
$j=0,\ldots,n$, simple examples show that the ``only if'' part of
Proposition~\ref{prop4a}(i) may fail, should \eqref{eq4101} be
replaced by
$$
F_{\mu-} \lt( {\textstyle \frac12} (x_{,j} + x_{,j+1} )\rt) \le
P_{,j} \le  F_{\mu} \lt( {\textstyle \frac12} (x_{,j} +
x_{,j+1})\rt)\quad  \forall\ j=0,\ldots,n.
$$
Similar observations pertain to Proposition~\ref{prop4a}(ii) vis-\`{a}-vis Proposition~\ref{prop3f} and Theorem~\ref{thwei-K}.
\re
Proposition \ref{prop4a} immediately yields the existence of
unique best uniform $d_1$-approximations of $\B_b$; see also \cite[Cor.2.10]{BHM}.

\begin{cor}\label{cor4b}
Let $b>1$ and $n\in \N$. Then the best uniform $d_1$-approximation of\,
$\B_b$ is $\delta_x^{u_n}$, with $x_{,j}= b^{(2j-1)/(2n)}$ for all
$j=1,\ldots, n$, and $\# \mbox{\rm supp}\,
\delta_{\bullet}^{u_n}=n$. Moreover, $d_1(\B_b , \delta_{\bullet}^{u_n}) =
\displaystyle{\frac{1}{\log b} \tanh \left( \frac{\log
      b}{4n}\right)}$, and $\lim_{n\to \infty} n d_1\lt(\B_b,
\delta_{\bullet}^{u_n}\rt) = \frac14$.
\end{cor}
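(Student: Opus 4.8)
The plan is to read off $\delta_{\bullet}^{u_n}$ from Proposition~\ref{prop4a}(ii) and then to compute $d_1\lt(\B_b,\delta_{\bullet}^{u_n}\rt)$ directly. With $p=u_n$ one has $P_{,j}=j/n$, so $P_{,j-1}<P_{,j}$ for every $j=1,\dots,n$ and the hypothesis in the implication \eqref{eq4102} always holds. Since $\I=[1,b]$ and $F_{\B_b}(x)=\log x/\log b$, the quantile function is $F_{\B_b}^{-1}(y)=b^{y}$, which is continuous on $[0,1[$; hence the lower and upper bounds in \eqref{eq4102} coincide, forcing $x_{,j}=F_{\B_b}^{-1}\lt(\tfrac{2j-1}{2n}\rt)=b^{(2j-1)/(2n)}$ for all $j$. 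By Proposition~\ref{prop4a}(ii) this gives both existence and uniqueness of the best uniform $d_1$-approximation, and since $j\mapsto b^{(2j-1)/(2n)}$ is strictly increasing, $\#\mbox{\rm supp}\,\delta_{\bullet}^{u_n}=n$.

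It then remains to evaluate $d_1\lt(\B_b,\delta_x^{u_n}\rt)$. The cleanest route is the quantile form of \eqref{eq2} with $r=1$: since $\lambda(\I)=b-1$ and $F_{\delta_x^{u_n}}^{-1}(y)=b^{(2j-1)/(2n)}$ on $\lt[\tfrac{j-1}{n},\tfrac{j}{n}\rt[$, one has $d_1\lt(\B_b,\delta_x^{u_n}\rt)=(b-1)^{-1}\sum_{j=1}^{n}\int_{(j-1)/n}^{j/n}\bigl|b^{y}-b^{(2j-1)/(2n)}\bigr|\,{\rm d}y$. On each subinterval the increasing function $y\mapsto b^{y}$ crosses the level $b^{(2j-1)/(2n)}$ exactly at the midpoint $\tfrac{2j-1}{2n}$, so the $j$-th integral splits into two elementary pieces whose sum is $\frac{1}{\log b}\bigl(b^{(j-1)/n}+b^{j/n}-2b^{(2j-1)/(2n)}\bigr)=\frac{1}{\log b}\bigl(b^{j/(2n)}-b^{(j-1)/(2n)}\bigr)^{2}$. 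Setting $r=b^{1/(2n)}$, the resulting sum telescopes as a geometric series, $\sum_{j=1}^{n}(r^{j}-r^{j-1})^{2}=(r-1)^{2}\frac{r^{2n}-1}{r^{2}-1}=(b-1)\,\frac{r-1}{r+1}$, and the identity $\frac{r-1}{r+1}=\tanh\!\lt(\tfrac{\log b}{4n}\rt)$ (valid since $r=e^{(\log b)/(2n)}$) yields $d_1\lt(\B_b,\delta_{\bullet}^{u_n}\rt)=\frac{1}{\log b}\tanh\!\lt(\tfrac{\log b}{4n}\rt)$.

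Finally, the asymptotics are immediate from $\lim_{t\to0}t^{-1}\tanh t=1$: $n\,d_1\lt(\B_b,\delta_{\bullet}^{u_n}\rt)=\frac{n}{\log b}\tanh\!\lt(\tfrac{\log b}{4n}\rt)\to\frac14$ as $n\to\infty$. The only place that needs a little care is the algebraic bookkeeping in the middle paragraph — spotting the collapse $b^{(j-1)/n}+b^{j/n}-2b^{(2j-1)/(2n)}=\bigl(b^{j/(2n)}-b^{(j-1)/(2n)}\bigr)^{2}$ and the cancellation $(r-1)^{2}/(r^{2}-1)=(r-1)/(r+1)$ that turns the geometric sum into a single hyperbolic tangent; once these are seen, the corollary follows mechanically from Proposition~\ref{prop4a}, so there is no genuine obstacle.
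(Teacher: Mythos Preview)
Your proof is correct and follows essentially the same route as the paper: invoke Proposition~\ref{prop4a}(ii) to pin down $x_{,j}=b^{(2j-1)/(2n)}$ (using continuity of $F_{\B_b}^{-1}$ for uniqueness), then evaluate $d_1$ via the quantile integral, reducing each summand to a perfect square and summing the resulting geometric series to obtain the $\tanh$ expression. The only difference is cosmetic---the paper factors each term as $(b^{1/(4n)}-b^{-1/(4n)})^2\,b^{(2j-1)/(2n)}$ before summing, whereas you write it as $(b^{j/(2n)}-b^{(j-1)/(2n)})^2=(r-1)^2r^{2(j-1)}$; the two factorizations are algebraically equivalent and lead to the same closed form.
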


\begin{proof}
By Proposition \ref{prop4a}(ii), $x_{,j}= b^{(2j-1)/(2n)}$ for all
$j=1,\ldots, n$, and
\begin{align*}
nd_1\lt(\B_b, \delta_{\bullet}^{u_n}\rt) & = \frac{n}{b-1}
\sum\nolimits_{j=1}^n \int_{(j-1)/n}^{j/n} \left| b^y -
  b^{(2j-1)/(2n)}\right| \, {\rm d}y \\[2mm] & = \frac{n \lt( b^{1/(4n)} -
  b^{-1/(4n)}\rt)^2}{(b-1)\log b} \sum\nolimits_{j=1}^n
b^{(2j-1)/(2n)}  = \frac{n}{\log b} \tanh \left( \frac{\log b}{4n}\right)
\stackrel{n\to \infty}{\longrightarrow} \frac14 \, .\qquad  
\end{align*}
\end{proof}

\noindent
Best (unconstrained) $d_1$-approximations of $\B_b$ exist and
are unique, too, by virtue of Proposition \ref{prop4a} and a direct calculation.

\begin{cor}\label{cor4c}
Let $b>1$ and $n\in \N$. Then the best $d_1$-approximation of\, $\B_b$
is $\delta_{x}^p$, with
\begin{align*}
x_{,j}  & =  \left( 1 + \frac{j-1}{n} \lt(b^{1/2} - 1\rt)\right) \left( 1 +
  \frac{j}{n} \lt(b^{1/2} - 1\rt)\right) \, , \\ %
\quad P_{,j}  & =
\frac{2}{\log b} \log \left( 1+ \frac{j}{n} \lt(b^{1/2} -1 \rt)\right) \, ,
\end{align*}
for all $j=1,\ldots , n$; in particular, $\# \mbox{\rm supp}\,
\delta_{\bullet}^{\bullet,n}=n$. Moreover, $d_1(\B_b,
\delta_{\bullet}^{\bullet, n}) = \displaystyle{ \frac1{n\log b} \tanh
  \left( \frac{\log b}{4}\right)}$.
\end{cor}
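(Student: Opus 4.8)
The plan is to squeeze any best $d_1$-approximation through Proposition~\ref{prop4a}(iii) and then solve the resulting equations in closed form. Recall that for $\mu=\B_b$ one has $\I=[1,b]$, that $F_{\B_b}(x)=\log x/\log b$ is continuous and strictly increasing on $\I$, and that the quantile function is $F_{\B_b}^{-1}(y)=b^y$; in particular $F_{\B_b-}=F_{\B_b}$ and $F_{\B_b-}^{-1}=F_{\B_b}^{-1}$, so the two-sided inequalities in \eqref{eq4101} and \eqref{eq4102} collapse to equalities.

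First I would reduce to the non-degenerate case. By Proposition~\ref{prop4a}(iii) a best $d_1$-approximation $\delta_x^p$ of $\B_b$ exists and satisfies \eqref{eq4101} and \eqref{eq4102}. Discarding the atoms of weight $0$, the remaining positive-weight atoms automatically have pairwise distinct locations --- otherwise \eqref{eq4102} would force $F_{\B_b}^{-1}$ to take the same value at two different arguments --- so one may write $\delta_x^p=\delta_v^w$ with $v\in\Xi_m$ strictly increasing, $w\in\Pi_m$ all positive, and $m\le n$. Since the $m$-atom measures form a subfamily of the $(\le n)$-atom measures and $\delta_v^w$ attains the $(\le n)$-minimum, $\delta_v^w$ is also a best $d_1$-approximation among measures with at most $m$ atoms; hence Proposition~\ref{prop4a}(iii) applies once more with $n$ replaced by $m$, yielding, with $W_i:=w_1+\cdots+w_i$ (so $W_0=0$, $W_m=1$),
$$
v_i=b^{(W_{i-1}+W_i)/2}\quad(i=1,\dots,m),\qquad \tfrac12(v_i+v_{i+1})=b^{W_i}\quad(i=1,\dots,m-1).
$$

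Next I would solve this system. Putting $d_i:=b^{W_i/2}>0$ (so $d_0=1$, $d_m=\sqrt b$), the first batch of equations becomes $v_i=d_{i-1}d_i$; substituting this into the second and cancelling $d_i$ gives $d_i=\tfrac12(d_{i-1}+d_{i+1})$ for $i=1,\dots,m-1$, i.e.\ $(d_0,\dots,d_m)$ is an arithmetic progression. Thus $d_i=1+\tfrac{i}{m}(\sqrt b-1)$ is uniquely determined, and so are $W_i=\tfrac{2}{\log b}\log\!\bigl(1+\tfrac{i}{m}(\sqrt b-1)\bigr)$ and $v_i=d_{i-1}d_i$. For the error I would use that on each $[W_{i-1},W_i]$ the increasing function $y\mapsto b^y$ equals the constant $v_i=\sqrt{b^{W_{i-1}}b^{W_i}}$ exactly at the midpoint, so an elementary integration gives $\int_{W_{i-1}}^{W_i}|b^y-v_i|\,{\rm d}y=(d_i-d_{i-1})^2/\log b$, whence
$$
d_1(\B_b,\delta_v^w)=\frac1{b-1}\sum_{i=1}^m\frac{(d_i-d_{i-1})^2}{\log b}=\frac{(\sqrt b-1)^2}{m(b-1)\log b}=\frac1{m\log b}\cdot\frac{\sqrt b-1}{\sqrt b+1}=\frac1{m\log b}\tanh\!\Bigl(\frac{\log b}{4}\Bigr),
$$
using $\tanh(\tfrac14\log b)=(b^{1/4}-b^{-1/4})/(b^{1/4}+b^{-1/4})=(\sqrt b-1)/(\sqrt b+1)$.

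Finally, since the value just computed is strictly decreasing in $m$, minimality forces $m=n$; substituting $m=n$ into the formulas for $v_i$ and $W_i$ reproduces the asserted $x_{,j}$ and $P_{,j}$, gives $\#\,\mbox{\rm supp}\,\delta_{\bullet}^{\bullet,n}=n$, and yields the stated value of $d_1(\B_b,\delta_{\bullet}^{\bullet,n})$; uniqueness of the best approximation is inherited from the uniqueness of the solution of the system. The only step that needs care is the reduction in the second paragraph --- removing zero-weight atoms, observing that coincidences are then excluded, and legitimately re-applying Proposition~\ref{prop4a}(iii) with the reduced atom count --- but this is routine given the continuity and strict monotonicity of $F_{\B_b}$ and $F_{\B_b}^{-1}$; everything else is direct computation.
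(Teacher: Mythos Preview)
Your proof is correct and follows essentially the same route as the paper: apply Proposition~\ref{prop4a}(iii), use the substitution $d_i=b^{P_{,i}/2}$ to reduce the optimality equations to an arithmetic progression, and then compute the error directly. The only difference is that you are more careful than the paper about the non-degeneracy step---the paper silently assumes all $n$ atoms are genuinely present and distinct, whereas you reduce to $m\le n$ positive-weight distinct atoms, solve for general $m$, and then force $m=n$ via the strict monotonicity of the error; this extra care is sound and arguably fills a small gap in the paper's presentation.
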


\begin{proof}
Let $\delta_x^p$ be a best
$d_1$-approximation. Then, by Proposition \ref{prop4a}(iii),
$$
b^{P_{,j}} = \frac{x_{,j} + x_{,j+1}}{2} \quad \forall j = 1,
\ldots , n-1 \, ,
$$
but also $x_{,j} = b^{(P_{,j-1} + P_{,j})/2}$ for all $ j=1, \ldots
, n$, and hence $2b^{P_{,j}/2} = b^{P_{,j-1}/2} + b^{P_{j+1}/2}$. Since $P_0
= 0$, $P_n = 1$, it follows that $b^{P_{,j}/2} = 1 + j (b^{1/2}
-1)n^{-1}$ for all $j=0,\ldots , n$. This yields the asserted unique $\delta_x^p$, and
\begin{align*}
d_1\lt(\B_b , \delta_{\bullet}^{\bullet, n}\rt) & = \frac1{b-1} \sum\nolimits_{j=1}^n
\int_{P_{,j-1}}^{P_{,j}} |b^y - x_{,j}| \, {\rm d}y = \frac{b- x_{,n} -
  (x_{,1} -1)}{(b-1)\log b} \\
& = \frac1{n\log b} \tanh
\left( \frac{\log b}{4} \right) \, ,
\end{align*}
via a straightforward calculation. 
\end{proof}

\begin{figure}[ht]\label{fig2}
\psfrag{tleg}[l]{\small $b=10, n=3$}
\psfrag{tdu}[l]{\small $d_1(\beta_{10},
  \delta_{\bullet}^{u_3})= 8.232\cdot 10^{-2}$}
\psfrag{td}[l]{\small $d_1(\beta_{10},
  \delta_{\bullet}^{\bullet,3})=7.520\cdot 10^{-2}$}
\psfrag{th1}[]{\small $1$}
\psfrag{thx}[]{\small $x$}
\psfrag{tff}[]{\small $F_{\beta_{10}}(x)$}
\psfrag{th10}[]{\small $10$}
\psfrag{tv0}[]{\small $0$}
\psfrag{tv13}[]{\small $\frac13$}
\psfrag{tv23}[]{\small $\frac23$}
\psfrag{tv1}[]{\small $1$}
\begin{center}
\includegraphics[height=6.4cm]{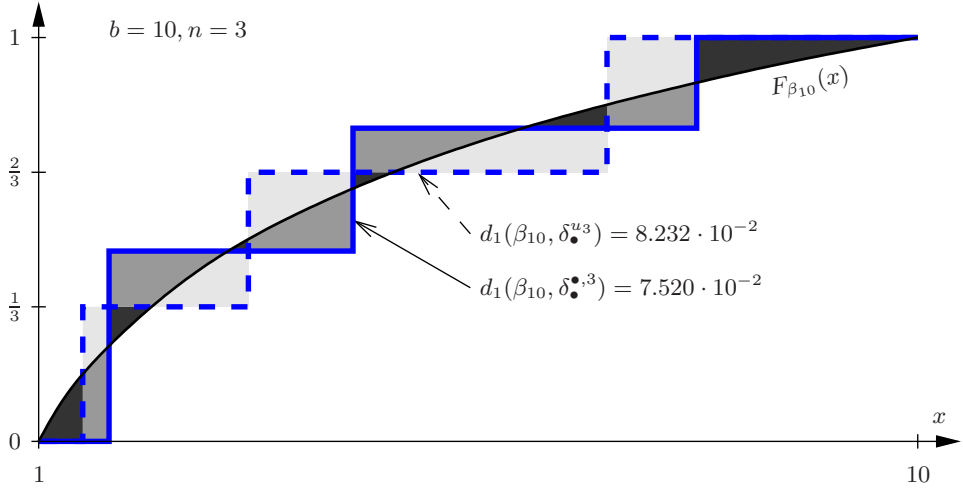}
\end{center}
\vspace*{-4mm}
\caption{The best (solid blue line) and best uniform (broken blue line) $d_1$-approximations of
  $\beta_{10}$ both are unique; see Corollaries \ref{cor4c} and
  \ref{cor4b}, respectively. Coincidentally, best uniform
  $d_1$-approximations of $\beta_{10}$ are best $d_{\sf
    K}$-approximations as well; see Corollary \ref{coKS}.}
\end{figure}

\begin{rem}\label{rm4c}
(i) Due to the highly non-linear nature of the optimality conditions
(\ref{eq4101}) and (\ref{eq4102}), best $d_1$-approximations are rarely given by
explicit formulae such as those in Corollary \ref{cor4c}. Aside from
Benford's Law, the authors know of only two other
families of continuous distributions that allow for similarly explicit formulae,
namely uniform and (one- or two-sided) exponential distributions.

(ii) A popular family of metrics on $\cP$ closely related to $d_1$ are
the so-called {\em Fortet--Mourier $r$-distances\/} ($1\le
r<+\infty$), given by
$$
d_{{\sf FM}_{r}} (\mu, \nu) = \int_{\I} \max\{ 1,|y|\}^{r-1} |F_{\mu}(y) -
F_{\nu}(y)|\, {\rm d}y \, .
$$
Like the L\'evy and Kantorovich metrics, the Fortet--Mourier $r$-distance also
metrizes the weak topology on $\cP$.  The reader is referred to
\cite{PP,R} for details on the mathematical background of $d_{{\sf
    FM}_r}$ and its use for stochastic optimization.  Note that if $\I\subset [1,+\infty[$ then
$$
d_{{\sf FM}_r}(\mu, \nu) = \frac{\lambda \bigl( T(\I)\bigr)}{r} d_1\lt(\mu \circ T^{-1},
\nu \circ T^{-1}\rt) \, ,
$$
with the homeomorphism $T: x\mapsto x^r$ of $[1,+\infty[$. For
instance, $\B_b \circ T^{-1} = \B_{rb}$, and hence best (or best uniform)
$d_{{\sf FM}_{r}}$-approximations of $\B_b$ can easily be
identified using Corollary \ref{cor4c} (or \ref{cor4b}).
\end{rem}

\subsection{$d_r$-approximations ($1<r<+\infty$)}

Similarly to the case of $r=1$, \cite[Thm.5.5]{XB} guarantees that, given any
$n\in \N$, there exists a (unique) best uniform $d_r$-approximation
$\delta_{\bullet}^{u_n}$ of $\B_b$. Except for $r=2$, however, no
explicit formula seems to be available for
$\delta_{\bullet}^{u_n}$. It is desirable, therefore, to at least identify {\em asymptotically\/} best
uniform $d_r$-approximations, that is, a sequence
$(x_n)$ with $x_n \in \Xi_n$ for all $n\in \N$ such that
$$
\lim\nolimits_{n\to \infty} \frac{d_r \lt(\B_b,
  \delta_{x_n}^{u_n}\rt)}{d_r \lt(\B_b, \delta_{\bullet}^{u_n}\rt)} = 1 \, .
$$
Usage of \cite[Thm.5.15]{XB} accomplishes this and also yields
the uniform $d_r$-quanti\-za\-tion coefficient of $\B_b$. (Notice that, as
$r\downarrow 1$, the latter is consistent with Corollary
\ref{cor4b}.)

\begin{prp}\label{prop4d}
Let $b,r>1$. Then $\lt(\delta_{x_n}^{u_n}\rt)$, with $x_{n,j} =
b^{(2j-1)/(2n)}$ for all $n\in \N$ and $j=1, \ldots , n$, is a sequence
of asymptotically best uniform $d_r$-approximations of\,
$\B_b$. Moreover,
$$
\lim\nolimits_{n\to \infty} n d_r (\B_b , \delta_{\bullet}^{u_n}) =
\frac{(\log b)^{1-1/r}}{2(b-1)} \left( \frac{b^r - 1}{r(r+1)}\right)^{1/r} \, .
$$
\end{prp}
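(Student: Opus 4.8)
\textbf{Proof proposal for Proposition \ref{prop4d}.}

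The plan is to treat both assertions as direct applications of the general asymptotic machinery from \cite{XB}, specifically \cite[Thm.5.15]{XB}, which governs the rate of decay of the uniform $d_r$-quantization error for absolutely continuous $\mu$. First I would record that $\B_b$ is absolutely continuous on $\I=[1,b]$ with density $g_b(x)=F_{\B_b}'(x)=1/(x\log b)$, so that the hypotheses of \cite[Thm.5.15]{XB} are satisfied. That theorem expresses the asymptotically optimal point allocation via the quantile transform and yields a closed-form constant; concretely, it asserts that for absolutely continuous $\mu$ with density $g$,
\begin{equation*}
\lim_{n\to\infty} n\, d_r(\mu, \delta_{\bullet}^{u_n}) = \frac{1}{\lambda(\I)}\cdot\frac{1}{2(r+1)^{1/r}}\left(\int_{\I} g(x)^{-r}\,\mu(\mathrm{d}x)\right)^{1/r} = \frac{1}{2\lambda(\I)(r+1)^{1/r}}\left(\int_{\I} g(x)^{1-r}\,\mathrm{d}x\right)^{1/r},
\end{equation*}
and that the midpoint-in-quantile allocation is asymptotically optimal. (I would cite the precise statement number; the exact normalization constant is whatever \cite[Thm.5.15]{XB} gives, and one checks it reduces to $\tfrac14$ as $r\downarrow 1$ for $\B_b$, consistent with Corollary \ref{cor4b}.)

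Next I would verify that $x_{n,j}=b^{(2j-1)/(2n)}$ realizes the asymptotically optimal allocation. The uniform weights $u_n$ place mass $1/n$ on the $j$-th cell, so the natural representative is $F_{\B_b}^{-1}$ evaluated at the cell midpoint $ (2j-1)/(2n)$ in the probability scale; since $F_{\B_b}^{-1}(y)=b^{y}$, this is exactly $b^{(2j-1)/(2n)}$. By \cite[Thm.5.15]{XB} (the part identifying asymptotically best sequences), any sequence whose $j$-th point agrees to leading order with $F_{\mu}^{-1}$ at the cell midpoint is asymptotically best; in the $r=2$ case it is in fact exactly optimal, which is a useful sanity check. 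Thus $\lim_{n\to\infty} d_r(\B_b,\delta_{x_n}^{u_n})/d_r(\B_b,\delta_{\bullet}^{u_n})=1$.

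Finally I would evaluate the constant for $\mu=\B_b$. Here $\lambda(\I)=b-1$ and $g_b(x)=1/(x\log b)$, so
\begin{equation*}
\int_1^b g_b(x)^{1-r}\,\mathrm{d}x = (\log b)^{r-1}\int_1^b x^{r-1}\,\mathrm{d}x = (\log b)^{r-1}\cdot\frac{b^r-1}{r},
\end{equation*}
whence
\begin{equation*}
\lim_{n\to\infty} n\, d_r(\B_b,\delta_{\bullet}^{u_n}) = \frac{1}{2(b-1)(r+1)^{1/r}}\left((\log b)^{r-1}\cdot\frac{b^r-1}{r}\right)^{1/r} = \frac{(\log b)^{1-1/r}}{2(b-1)}\left(\frac{b^r-1}{r(r+1)}\right)^{1/r},
\end{equation*}
which is the claimed formula. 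The only genuine obstacle is bookkeeping: one must extract from \cite[Thm.5.15]{XB} the exact normalizing constant under the paper's (unusual) $\lambda(\I)^{-1}$ scaling of $d_r$ and confirm the $1/(2(r+1)^{1/r})$ factor; once that is pinned down, everything else is the elementary integral above plus the observation that the midpoint allocation matches. I would also briefly note that letting $r\downarrow1$ in the displayed limit recovers $\tfrac{1}{2(b-1)}\cdot\tfrac{b-1}{?}\to\tfrac14$ (after the $\tanh$-free leading-order check), matching Corollary \ref{cor4b}, as the proposition parenthetically remarks.
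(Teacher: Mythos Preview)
Your proposal is correct and follows exactly the approach the paper indicates: invoke \cite[Thm.5.15]{XB}, verify that $x_{n,j}=F_{\B_b}^{-1}\bigl((2j-1)/(2n)\bigr)=b^{(2j-1)/(2n)}$ is the midpoint-in-quantile allocation, and evaluate the resulting integral constant. Indeed, the paper itself does not write out a proof at all but simply says ``usage of \cite[Thm.5.15]{XB} accomplishes this,'' so your computation of $\int_1^b g_b^{\,1-r}$ and the ensuing algebra are more detail than the paper provides.

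One small correction: your aside that for $r=2$ the midpoint allocation ``is in fact exactly optimal'' is not right. For $d_2$ the best point in the $j$-th cell is the conditional \emph{mean} $n\int_{(j-1)/n}^{j/n} b^y\,\mathrm{d}y = n\bigl(b^{j/n}-b^{(j-1)/n}\bigr)/\log b$, not the conditional median $b^{(2j-1)/(2n)}$; the paper's remark that an explicit formula is available for $r=2$ refers to the former, not to $x_{n,j}$. This does not affect your argument, since asymptotic (not exact) optimality is all that is claimed, but you should drop that parenthetical. Likewise, the trailing ``$\tfrac{b-1}{?}$'' check should either be completed (the limit as $r\downarrow 1$ is $\tfrac{1}{2(b-1)}\cdot\tfrac{b-1}{2}=\tfrac14$) or omitted.
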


The remainder of this section studies best $d_r$-approximations of
$\B_b$. In general, the question of uniqueness of best
$d_r$-approximations is a difficult one, for which only partial
answers exist; see, e.g., \cite[Sec.5]{GL}. Specifically, $\B_b$ does
not seem to satisfy any known condition (such as, e.g.,
log-concavity) that would guarantee uniqueness. However, uniqueness can be
established via a direct calculation.

\begin{theorem}\label{thm4e}
Let $b,r>1$ and $n\in \N$. There exists a unique best
$d_r$-appro\-xi\-ma\-tion $\delta_{\bullet}^{\bullet,n}$ of\, $\B_b$, and $\#{\rm supp}\, \delta_{\bullet}^{\bullet,n}=n$.
\end{theorem}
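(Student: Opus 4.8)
The plan is to pair a soft compactness argument for existence with a direct, scale-invariance-driven analysis of the first-order optimality conditions for uniqueness. \textbf{Existence} of a best $d_r$-approximation $\delta_{\bullet}^{\bullet,n}$ of $\B_b$ follows verbatim as in the proof of Theorem~\ref{thm3h}: the set $\{\nu\in\cP:\#\mathrm{supp}\,\nu\le n\}$ is compact and $\nu\mapsto d_r(\B_b,\nu)$ is continuous, so the infimum is attained. To see that $\#\mathrm{supp}\,\delta_{\bullet}^{\bullet,n}=n$, suppose a minimizer had $m<n$ distinct atoms. Recalling $\I=[1,b]$ and $F_{\B_b}^{-1}(y)=b^y$, so that $d_r(\B_b,\delta_x^p)^r=(b-1)^{-r}\sum_j\int_{P_{,j-1}}^{P_{,j}}|b^y-x_{,j}|^r\,\mathrm{d}y$, pick a cell on which the integrand contributes most; since $y\mapsto b^y$ is strictly increasing, hence non-constant there, bisecting that cell and placing two atoms at the $L^r$-centers of the two halves strictly decreases the total (those centers differ, so neither half is optimally served by the original single atom), and the budget $m+1\le n$ is available --- contradicting optimality. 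Hence every best approximation uses all $n$ atoms with positive weights, so $1\le x_{,1}<\cdots<x_{,n}\le b$ and $0=P_{,0}<P_{,1}<\cdots<P_{,n}=1$.

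\textbf{Uniqueness} proceeds through the first-order conditions (cf.\ \cite{XB}), which I would record in the variables $a_j:=b^{P_{,j}}$, so that $1=a_0<a_1<\cdots<a_n=b$: for any best $d_r$-approximation $\delta_x^p$, each $x_{,j}$ is the unique $L^r$-center of $\B_b$ on the cell $[a_{j-1},a_j]$, i.e.\ the unique zero $x=\chi_r(a_{j-1},a_j)$ of $x\mapsto\int_{a_{j-1}}^{a_j}|v-x|^{r-1}\mathrm{sgn}(v-x)\,v^{-1}\,\mathrm{d}v$, which lies in $(a_{j-1},a_j)$ for $r>1$; and the stationarity condition in $P_{,j}$ gives the Voronoi identity $2a_j=x_{,j}+x_{,j+1}$ for $j=1,\dots,n-1$, using $x_{,j}<a_j<x_{,j+1}$. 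The key point special to $\B_b$ is \emph{scale-invariance}: the substitution $v\mapsto cv$ yields $\chi_r(ca,ca')=c\,\chi_r(a,a')$. Writing $\rho_j:=a_j/a_{j-1}>1$, $c(\rho):=\chi_r(1,\rho)$ and $g(\rho):=c(\rho)/\rho$, the conditions collapse to $x_{,j}=a_{j-1}c(\rho_j)$ and $c(\rho_{j+1})=2-g(\rho_j)$, i.e.\ $\rho_{j+1}=\Theta(\rho_j):=c^{-1}(2-g(\rho_j))$, together with the single terminal constraint $\prod_{j=1}^n\rho_j=a_n/a_0=b$.

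It then suffices to show the ``shooting map'' $\rho_1\mapsto\prod_{j=1}^n\Theta^{[j-1]}(\rho_1)$ is strictly increasing. By the implicit function theorem applied to the defining integral equation, $\chi_r$ is strictly increasing in each argument, since the $a$- and $a'$-partials of that integral are $v^{-1}(x-v)^{r-1}|_{v=a}>0$ and $v^{-1}(v-x)^{r-1}|_{v=a'}>0$, while its $x$-partial is $-(r-1)\int_a^{a'}|v-x|^{r-2}v^{-1}\,\mathrm{d}v<0$; in particular $c$ is strictly increasing, hence so is $c^{-1}$. Moreover $g$ is strictly decreasing: for $\rho>\tilde\rho>1$, scaling gives $g(\tilde\rho)=\chi_r(1,\tilde\rho)/\tilde\rho=\chi_r(\rho/\tilde\rho,\rho)/\rho>\chi_r(1,\rho)/\rho=g(\rho)$, because $\rho/\tilde\rho>1$ and $\chi_r$ increases in its first argument. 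Hence $\Theta=c^{-1}(2-g(\cdot))$ is strictly increasing, so $\rho_1\mapsto\prod_{j}\Theta^{[j-1]}(\rho_1)$ --- a product of strictly increasing positive functions --- is strictly increasing; the terminal constraint therefore pins down $\rho_1$, and with it the entire configuration $(x,p)$, uniquely. Since a best approximation exists and necessarily satisfies this system, it is unique. (For $r=1$ this recovers Corollary~\ref{cor4c}, where $c(\rho)=\sqrt{\rho}$ and the recursion linearizes: $\sqrt{a_j}-\sqrt{a_{j-1}}$ is constant.) I expect the main obstacle to be the careful bookkeeping of the first-order conditions --- in particular verifying that the stationarity condition in each $P_{,j}$ really yields the clean identity $2a_j=x_{,j}+x_{,j+1}$, which requires the interior location $x_{,j}<a_j<x_{,j+1}$ to be established first, and checking that the scaling identity indeed reduces the full system to a one-parameter recursion with a monotone shooting map.
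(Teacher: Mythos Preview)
Your proof is correct and follows essentially the same strategy as the paper's: derive the first-order (Voronoi and $L^r$-center) conditions, exploit the scale invariance of $\B_b$ to reduce the system to a one-parameter monotone recursion, and conclude uniqueness via a shooting argument. The only difference is cosmetic---you parametrize by the cell ratios $\rho_j=b^{P_{,j}}/b^{P_{,j-1}}$ and the abstract center map $\chi_r$, whereas the paper parametrizes by the atom ratios $x_{,j}/x_{,j-1}$ and the explicit integrals $g_0,\,g_r$, but the monotonicity-and-contradiction mechanism is identical.
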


\begin{proof}
Existence follows as in Theorem \ref{thm3h}; alternatively, see
\cite[Sec.4.1]{GL} or \cite[Prop.5.22]{XB}. To avoid trivialities,
henceforth assume $n\ge 2$. If $d_r\lt(\B_b, \delta_x^p\rt) = d_r \lt(\B_b,
\delta_{\bullet}^{\bullet, n}\rt)$, then by \cite[Thm.5.23]{XB},
$$
b^{P_{,j}} = \frac{x_{,j} + x_{,j+1}}{2} \quad \forall j = 1, \ldots ,
n-1 \, ,
$$
but also
\begin{equation}\label{eq4p01}
\int_{P_{,j-1}}^{\log_b x_{,j}} \lt(x_{,j} - b^y\rt)^{r-1} \, {\rm d}y =
\int_{\log_b x_{,j}}^{P_{,j}} \lt(b^y - x_{,j}\rt)^{r-1} \, {\rm d}y \quad
\forall j=1, \ldots, n \, .
\end{equation}
Eliminating $P$ and substituting $z = b^y/x_{,j}$ in (\ref{eq4p01})
yields $n$ equations for $x_{,1}, \ldots , x_{,n}$, namely
\begin{eqnarray}\label{eq4p02}
\int_{1}^{x_{,1}} (z-1)^{r-1} \frac{{\rm d}z}{z^r} & = & 2^{1-r} g_0
\left( \frac{x_{,2}}{x_{,1}}\right) \, , \nonumber \\
g_r\left( \frac{x_{,j}}{x_{,j-1}}\right) & = & g_0 \left(
  \frac{x_{,j+1}}{x_{,j}}\right) \, , \quad \forall j=2, \ldots , n-1
\, ,\\
g_r\left( \frac{x_{,n}}{x_{,n-1}}\right) & = & g_0 \left(
  \frac{2b - x_{,n}}{x_{,n}}\right) \, ,\nonumber
\end{eqnarray}
where the smooth, increasing function $g_a$, with $a\in \R$, is given
by
$$
g_a(x) = \int_1^x \frac{(z-1)^{r-1}}{z^a (z+1)} \, {\rm d}z \, , \quad
x\ge 1\, .
$$
Assume that $\widetilde{x}\in \Xi_n$ also solves
(\ref{eq4p02}). If $\widetilde{x}_{,1}> x_{,1}$ then
$\widetilde{x}_{,j+1}/\widetilde{x}_{,j} > x_{,j+1}/x_{,j}$ and hence
$\widetilde{x}_{,j+1} > x_{,j+1}$ for all $j=0,\ldots, n-1$, but by
the last equation in (\ref{eq4p02}) also $2b/\widetilde{x}_{,n} > 2b
/x_{,n}$, an obvious contradiction. Similarly, $\widetilde{x}_{,1}<
x_{,1}$ leads to a contradiction. Thus, $\widetilde{x}_{,1} = x_{,1}$,
and consequently $\widetilde{x} = x$. (If $n=1$ then (\ref{eq4p02})
reduces to
$$
\int_{1}^{x_{,1}} (z-1)^{r-1} \frac{{\rm d}z}{z^r} = 2^{1-r} g_0
\left( \frac{2b - x_{,1}}{x_{,1}}\right) \, ,
$$
which also has a unique solution since, as $x_{,1}$ increases from $1$
to $b$, the left side increases from $0$ whereas the right side decreases to $0$.)
In summary, therefore, $x\in \Xi_n$ and $p\in \Pi_n$ are uniquely determined by $d_r\lt(\B_b, \delta_x^p\rt) = d_r \lt(\B_b,
\delta_{\bullet}^{\bullet, n}\rt)$. 
\end{proof}

As in the case of best uniform $d_r$-approximations of $\B_b$, no
explicit formula is available for $\delta_{\bullet}^{\bullet,n}$, not
even when $r=2$. Still, it is possible to identify {\em
  asymptotically\/} best $d_r$-approximations, that is, a
sequence $\lt(\delta_{x_n}^{p_n}\rt)$ with $x_n \in \Xi_n$ and $p_n \in \Pi_n$
for all $n\in \N$ such that
$$
\lim\nolimits_{n\to \infty} \frac{d_r \lt(\B_b, \delta_{x_n}^{p_n}\rt)}{d_r
  \lt(\B_b , \delta_{\bullet}^{\bullet, n}\rt)} = 1 \, .
$$
In addition, the $d_r$-quantization coefficient of $\B_b$ can be
computed explicitly; for details see \cite[Prop.5.26]{XB} and the
references given there. Notice that, as
$r\downarrow 1$, the result is consistent with Corollary \ref{cor4c}.

\begin{prp}\label{prop4f}
Let $b,r>1$. Then $\lt(\delta_{x_n}^{p_n}\rt)$, with
$$
x_{n,j}  = \lt(1+\frac{j}{n+1}\lt(b^{r/(r+1)}-1\rt)\rt)^{1 + 1/r}\, , \quad
P_{n,j}  = \frac1{\log b}  \log \frac{x_{n,j} + x_{n,j+1}}{2}\, ,
$$
for all $n\in \N$ and $j=1, \ldots , n-1$, and $x_{n,n} = \lt(  1 +
(b^{r/(r+1)} -1) \frac{n}{n+1} \rt)^{1+1/r}$, is a sequence of asymptotically
best $d_r$-approximations of\, $\B_b$. Moreover,
$$
\lim\nolimits_{n\to \infty} n d_r (\B_b , \delta_{\bullet}^{\bullet,
  n}) = \frac{r+1}{2(b-1)(\log b)^{1/r}} \left( \frac{b^{r/(r+1)}
  -1 }{r} \right)^{1+1/r}  \, .
$$
\end{prp}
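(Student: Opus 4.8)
The plan is to deduce Proposition~\ref{prop4f} as a special case of the general asymptotic theory for $d_r$-approximations developed in \cite{XB}, specifically \cite[Prop.5.26]{XB}, by verifying that $\B_b$ satisfies the hypotheses of that result and by matching the resulting asymptotic data with the explicit sequences claimed here. First I would recall from \cite{XB} the general form of the asymptotically best $d_r$-approximation of a measure $\mu$ with a density: if $\mu$ has density $\varphi$ on $\I$ (here $\varphi(x) = 1/(x\log b)$ on $[1,b]$), then the optimal point locations distribute asymptotically according to the density proportional to $\varphi^{1/(r+1)}$, and the optimal weights are the $\mu$-masses of the corresponding Voronoi cells, while the $d_r$-quantization coefficient is $\lambda(\I)^{-1}$ times the classical Bucklew--Wise / Zador-type constant $\bigl(\int_\I \varphi^{1/(r+1)}\bigr)^{1+1/r} \cdot c_r$ with the one-dimensional constant $c_r = \bigl(2^r(r+1)\bigr)^{-1/r}$ (or the precise normalization used in \cite{XB}).

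The key computational steps are then: (1) compute $\int_1^b \varphi(x)^{1/(r+1)}\,{\rm d}x = \int_1^b (x\log b)^{-1/(r+1)}\,{\rm d}x$, which is an elementary power-function integral yielding something proportional to $(b^{r/(r+1)}-1)$ and an appropriate power of $\log b$; (2) invert the cumulative of the normalized density $\varphi^{1/(r+1)}$ to get the quantile-type formula for the asymptotically optimal point locations --- this is precisely where $x_{n,j} = \bigl(1 + \tfrac{j}{n+1}(b^{r/(r+1)}-1)\bigr)^{1+1/r}$ comes from, since integrating $x^{-1/(r+1)}$ and inverting produces the exponent $1+1/r$; (3) check that the claimed weights $P_{n,j} = (\log b)^{-1}\log\bigl(\tfrac{x_{n,j}+x_{n,j+1}}{2}\bigr)$ are (asymptotically) the $\B_b$-masses of the midpoint-Voronoi cells $\bigl[\tfrac{x_{n,j-1}+x_{n,j}}{2}, \tfrac{x_{n,j}+x_{n,j+1}}{2}\bigr]$, which is immediate from $F_{\B_b}(s) = \log s/\log b$; and (4) assemble the quantization coefficient $\lim_n n\, d_r(\B_b,\delta_\bullet^{\bullet,n})$ by plugging the integral from (1) into the general formula and simplifying to $\frac{r+1}{2(b-1)(\log b)^{1/r}}\bigl(\frac{b^{r/(r+1)}-1}{r}\bigr)^{1+1/r}$; the factor $(b-1)^{-1} = \lambda(\I)^{-1}$ accounts for the normalization in (\ref{eq2}). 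One should also note, as a sanity check, that letting $r\downarrow 1$ recovers Corollary~\ref{cor4c} (the limit becomes $\frac{1}{n\log b}\tanh(\frac{\log b}{4})$'s asymptotic slope, i.e.\ $\frac14$ after the appropriate rescaling), and that the endpoint adjustment $x_{n,n} = \bigl(1+(b^{r/(r+1)}-1)\tfrac{n}{n+1}\bigr)^{1+1/r}$ reflects the standard boundary correction in these Zador-type arguments.

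**Main obstacle.**
The genuinely delicate point is not the algebra but justifying that the \emph{uniformly continuous, bounded, strictly positive density} of $\B_b$ puts it squarely within the regularity class for which \cite[Prop.5.26]{XB} gives both the asymptotic optimality of the displayed sequence \emph{and} the exact quantization coefficient; in particular one must confirm that $\B_b$ is absolutely continuous with a density bounded away from $0$ and $\infty$ on its compact support, so that no singular part spoils the Bucklew--Wise asymptotics. This is straightforward here because $\varphi(x) = 1/(x\log b)$ satisfies $\frac{1}{b\log b}\le\varphi(x)\le\frac{1}{\log b}$ on $[1,b]$, but it is the hypothesis that must be checked explicitly. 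A secondary, more bookkeeping-type obstacle is reconciling the normalization conventions: \cite{XB} may state the quantization coefficient without the $\lambda(\I)^{-1}$ prefactor present in (\ref{eq2}), and one must insert the factor $(b-1)^{-1}$ consistently, both in the coefficient formula and implicitly in what ``asymptotically best'' means (the ratio in the definition is normalization-independent, so the point sequences transfer verbatim). Once the hypothesis check and the normalization are pinned down, the rest is the elementary integral and inversion computations sketched above, which I would present compactly rather than grinding through every step.
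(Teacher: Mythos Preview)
Your proposal is correct and is exactly the route the paper takes: the paper does not give an independent proof of this proposition but simply refers to \cite[Prop.5.26]{XB} (and the references there), noting also consistency with Corollary~\ref{cor4c} as $r\downarrow 1$. Your outline in fact supplies more computational detail than the paper itself, but the approach---verify the density hypotheses for $\B_b$, apply the Bucklew--Wise/Zador-type asymptotics from \cite{XB}, and read off the explicit point locations, weights, and coefficient---is identical.
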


\begin{example}\label{ex46a}
For $\mu = \mbox{\tt Beta}(2,1)$, given any $n\in \N$, a unique best
uniform $d_r$-approximation exists for each $r\ge 1$. The best uniform
$d_1$-approximations $\delta_{x}^{u_n}$, where $x_{,j} =
\sqrt{\frac{2j-1}{2n}}$ for $j=1, \ldots, n$, also constitute a
sequence of asymptotically best uniform $d_r$-approximations for $1<r<2$, with
\begin{equation}\label{eq44add2}
\lim\nolimits_{n\to \infty} n d_r (\mu, \delta_{\bullet}^{u_n}) =
\left( \frac{2^{1-2r}}{(r+1)(2-r)}\right)^{1/r} \, ,
\end{equation}
in analogy to Proposition \ref{prop4d}. For $r\ge 2$, however, this
analogy breaks down, as
$$
\lim\nolimits_{n\to \infty} \frac{n}{\sqrt{\log n}} d_2 (\mu ,
\delta_{\bullet}^{u_n}) = \frac1{4 \sqrt{3}} \, ,
$$
and $\lim_{n\to \infty} n^{1/2 + 1/r} d_r(\mu,
\delta_{\bullet}^{u_n})$ is finite and positive whenever $r>2$.

Since $\mu$ is log-concave, or by an argument similar to the one
proving Theorem \ref{thm4e}, there exists a unique best
$d_r$-approximation of $\mu$. While the authors do not know of an
explicit formula for $\delta_{\bullet}^{\bullet, n}$, simple
asymptotically best $d_r$-approximations in the spirit of Proposition
\ref{prop4f} exist, and
\begin{equation}\label{eq44add3}
\lim\nolimits_{n\to\infty} n d_r (\mu, \delta_{\bullet}^{\bullet,n}) =
2^{1/r -1} \frac{r+1}{(r+2)^{1+1/r}} \quad \forall r \ge 1 \, ;
\end{equation}
see \cite[Ex.5.28]{XB}. Note that (\ref{eq44add3}) is smaller than
(\ref{eq44add2}) for every $1\le r < 2$.
\end{example}

\begin{example}\label{ex47b}
For the inverse Cantor distribution, for every $r\ge 1$ let $\alpha_r = r^{-1} +
(1-r^{-1})\log 2/\log 3$, and note that $\log 2/\log 3
< \alpha_r \le 1$. With this, $3^{\alpha_r} d_r (\mu ,
\delta_{\bullet}^{u_{3n}}) = d_r (\mu, \delta_{\bullet}^{u_n})$ for
all $n\in \N$, and it is readily deduced that
$$
2^{2/r - 4} 3^{-3/r} \le n^{\alpha_r}
d_r (\mu, \delta_{\bullet}^{u_n})  \le 2^{1/r} \quad \forall n \in \N \, .
$$
Thus $\bigl( n^{\alpha_r} d_r (\mu, \delta_{\bullet}^{u_n})\bigr)$ is
bounded below and above by positive constants. (The authors suspect
that this sequence is divergent for every $r\ge 1$.)

Best $d_r$-approximations also exist, and in a similar spirit it can
be shown that $\bigl( n^{\widetilde{\alpha}_r} d_r (\mu,
\delta_{\bullet}^{\bullet, n}) \bigr)$ is bounded below and above by
positive constants (and again, presumably, divergent), where $\widetilde{\alpha}_r = \alpha_r \log 3 /
\log 2$. Note that $1< \widetilde{\alpha}_r \le \log 3/\log 2$, and
hence $\bigl( d_r (\mu, \delta_{\bullet}^{\bullet,n})\bigr)$ decays
faster than $(n^{-1})$ for every $r\ge 1$.
\end{example}

\section{Kolmogorov approximations}\label{secK}

This section discusses best finitely supported $d_{\sf
  K}$-approximations. Though ultimately the results are true analogues
of their counterparts in Sections~3 and 4, the underlying arguments
are subtly different, which may be seen as a reflection of the fact
that $d_{\sf K}$ metrizes a topology finer than the weak topology of
$\cP$. (Recall, however, that $d_{\sf K}$ does metrize the weak topology on $\cP_{\sf cts}$.)

Given $\mu\in \cP$ and $n\in\N,$ for every $x\in\Xi_n,$
let
$$
{\sf K}^{\bullet}(x)=\max\lt\{F_{\mu-}(x_{,1}),\frac{1}{2}{\max}
_{j=1}^{n-1}\ \bigl( F_{\mu-}(x_{,j+1})-F_{\mu}(x_{,j})\bigr) ,
1-F_{\mu}(x_{,n}) \rt\}.
$$
Note that ${\sf K}^{\bullet}(x)=d_{\sf  K}\lt(\mu,\delta_x^{\pi(x)}\rt)$ with
$\Pi(x)_{,j}=\frac{1}{2}\bigl(F_{\mu}(x_{,j})+F_{\mu-}(x_{,j+1})\bigr)$
for all $j=1,\ldots,n-1$. Existence and characterization of
best $d_{\sf K}$-approximations with prescribed locations are analogous to Theorem~\ref{thm3e}.
\begin{theorem}\label{thloc-K}
Assume that $\mu\in\cP,$ and $n\in\N.$ For every $x\in \Xi_n,$ there
exists a best $d_{\sf K}$-approximation of $\mu,$ given $x.$ Moreover,
$d_{\sf K}\lt(\mu,\delta_x^p\rt)=d_{\sf K}\lt(\mu,\delta_x^{\bullet}\rt)$ if and only if, for every $j=0,\ldots, n$,
\eqb\label{lK}
x_{,j} < x_{,j+1} \enspace \Longrightarrow \enspace
F_{\mu-}\lt(x_{,j+1}\rt)-{\sf K}^{\bullet}(x)\le P_{,j}\le F_{\mu}\lt(x_{,j}\rt)+{\sf K}^{\bullet}(x),\, \eqe and in this case $d_{\sf K}\lt(\mu,\delta_x^{\bullet}\rt)={\sf K}^{\bullet}(x)$.
\end{theorem}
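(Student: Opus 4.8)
The plan is to reduce the entire statement to one explicit formula for $d_{\sf K}\lt(\mu,\delta_x^p\rt)$, valid for a fixed $x\in\Xi_n$ and arbitrary $p\in\Pi_n$. To obtain it, partition $\R$ into the intervals $[x_{,j},x_{,j+1})$, $j=0,\ldots,n$ (with $x_{,0}=-\infty$, $x_{,n+1}=+\infty$), discarding the empty ones that occur when consecutive coordinates of $x$ coincide; equivalently, one may first relabel $\delta_x^p=\delta_y^q$ with strictly increasing $y$ exactly as at the beginning of the proof of Lemma~\ref{lem3d}. On each such interval $F_{\delta_x^p}$ equals the constant $P_{,j}$, while the non-decreasing function $F_{\mu}$ has infimum $F_{\mu}(x_{,j})$ and supremum $F_{\mu-}(x_{,j+1})$ there; since $\sup_t|g(t)-c|=\max\{\sup_t g(t)-c,\ c-\inf_t g(t)\}$, taking suprema piece by piece gives
\[
d_{\sf K}\lt(\mu,\delta_x^p\rt)=\max_{\substack{0\le j\le n\\ x_{,j}<x_{,j+1}}}\max\lt\{F_{\mu-}(x_{,j+1})-P_{,j},\ P_{,j}-F_{\mu}(x_{,j})\rt\},
\]
with the conventions $F_{\mu}(x_{,0})=F_{\mu}(-\infty)=0$ and $F_{\mu-}(x_{,n+1})=F_{\mu-}(+\infty)=1$, so that the terms $j=0$ and $j=n$ are exactly $F_{\mu-}(x_{,1})$ and $1-F_{\mu}(x_{,n})$. (If one prefers not to discard empty intervals, the terms with $x_{,j}=x_{,j+1}$ are dominated by their neighbours and may be kept harmlessly.)

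Granting this formula, the rest is short. For each $1\le j\le n-1$ with $x_{,j}<x_{,j+1}$, the bound $\max\{A,B\}\ge\frac12(A+B)$ yields $\max\lt\{F_{\mu-}(x_{,j+1})-P_{,j},\ P_{,j}-F_{\mu}(x_{,j})\rt\}\ge\tfrac12\bigl(F_{\mu-}(x_{,j+1})-F_{\mu}(x_{,j})\bigr)$, so comparing with the three quantities in the definition of ${\sf K}^{\bullet}(x)$ gives $d_{\sf K}\lt(\mu,\delta_x^p\rt)\ge{\sf K}^{\bullet}(x)$ for every $p\in\Pi_n$. For attainment, check that $\pi(x)$, given by $\Pi(x)_{,j}=\frac12\bigl(F_{\mu}(x_{,j})+F_{\mu-}(x_{,j+1})\bigr)$ for $j=1,\ldots,n-1$, genuinely lies in $\Pi_n$: $\Pi(x)_{,j-1}\le\Pi(x)_{,j}$ because $F_{\mu}$ and $F_{\mu-}$ are both non-decreasing, and $0\le\Pi(x)_{,1}$, $\Pi(x)_{,n-1}\le1$ are clear. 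Substituting $p=\pi(x)$ into the formula, each surviving inner term with $1\le j\le n-1$ equals $\frac12\bigl(F_{\mu-}(x_{,j+1})-F_{\mu}(x_{,j})\bigr)\le{\sf K}^{\bullet}(x)$, while the $j=0$, $j=n$ terms are unchanged; hence $d_{\sf K}\lt(\mu,\delta_x^{\pi(x)}\rt)={\sf K}^{\bullet}(x)$. Together with the lower bound, this shows a best $d_{\sf K}$-approximation of $\mu$ given $x$ exists and $d_{\sf K}\lt(\mu,\delta_x^{\bullet}\rt)={\sf K}^{\bullet}(x)$.

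For the characterization, note that by the lower bound $\delta_x^p$ is optimal precisely when $d_{\sf K}\lt(\mu,\delta_x^p\rt)\le{\sf K}^{\bullet}(x)$, and by the formula this holds iff every term is $\le{\sf K}^{\bullet}(x)$; for $1\le j\le n-1$ with $x_{,j}<x_{,j+1}$ that says exactly $F_{\mu-}(x_{,j+1})-{\sf K}^{\bullet}(x)\le P_{,j}\le F_{\mu}(x_{,j})+{\sf K}^{\bullet}(x)$, and the $j=0$, $j=n$ instances of this same inequality hold automatically, reducing to $F_{\mu-}(x_{,1})\le{\sf K}^{\bullet}(x)$ and $1-F_{\mu}(x_{,n})\le{\sf K}^{\bullet}(x)$. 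This is precisely condition~\eqref{lK}. The one genuinely delicate point in the whole argument is the opening formula: one must keep the left limits $F_{\mu-}$ at the right ends of the intervals straight from the values $F_{\mu}$ at the left ends, and make sure coincident coordinates $x_{,j}=x_{,j+1}$ cause no harm; everything after that is bookkeeping.
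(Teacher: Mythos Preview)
Your proof is correct and follows essentially the same route as the paper's: relabel (or discard empty intervals) to compute $d_{\sf K}\lt(\mu,\delta_x^p\rt)$ as a maximum over the non-degenerate intervals $[x_{,j},x_{,j+1})$, bound each term below by $\tfrac12\bigl(F_{\mu-}(x_{,j+1})-F_{\mu}(x_{,j})\bigr)$, verify that $p=\pi(x)$ realizes the bound, and read off the characterization. Your version is slightly more explicit in stating the intermediate formula for $d_{\sf K}\lt(\mu,\delta_x^p\rt)$, but the substance is identical.
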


\begin{proof}
Given $x\in\Xi_n$ and $p\in\Pi_n$, let $y\in\Xi_m$ and $q\in\Pi_{m}$ as in the proof of Lemma~\ref{lem3d}. Then
\[\begin{split}
  d_{\sf K}& \lt(\mu,\delta_x^p\rt) =\max\nolimits_{i=0}^m\sup\nolimits_{t\in[y_{,i},y_{
  ,i+1}[}\lt|F_{\mu}(t)-Q_{,i}\rt|\\
& \ge\max\lt\{F_{\mu-}(y_{,1}),\frac{1}{2}\max\nolimits_{i=1}^{m-1}\lt(F_{\mu-}
(y_{,i+1})-F_{\mu}(y_{,i})\rt),1-F_{\mu}(y_{,m})\rt\}\\
& = \max\lt\{F_{\mu-}(x_{,1}),\frac{1}{2}\max\nolimits_{j=1}^{n-1}\lt(F_{\mu-}
(x_{,j+1})-F_{\mu}(x_{,j})\rt),1-F_{\mu}(x_{,n})\rt\}\\
&  =  {\sf K}^{\bullet}(x).\end{split}\]
This shows that $\delta_x^{\pi(x)}$ is a best $d_{\sf K}$-approximation, given $x$, and $d_{\sf K}\lt(\mu,\delta_x^{\bullet}\rt)={\sf K}^{\bullet}(x)$. Moreover, $d_{\sf K}\lt(\mu,\delta_x^p\rt)={\sf K}^{\bullet}(x)$ if and only if
\[\max\lt\{\lt|F_{\mu-}(y_{,i+1})-Q_{,i}\rt|,\lt|F_{\mu}
  (y_{,i})-Q_{,i}\rt|\rt\}\le {\sf K}^{\bullet}(x)\,\quad \forall\
  i=1,\ldots,m-1,\] 
that is, \[F_{\mu-}(y_{,i+1})-{\sf K}^{\bullet}(x)\le Q_{,i}\le F_{\mu}(y_{,i})+{\sf K}^{\bullet}(x)\quad \forall\ i=0,\ldots,m,\] which in turn is equivalent to the validity
\eqref{lK} for every $j$. 
\end{proof}

To address the approximation problem with prescribed weights, an
auxiliary function analogous to $\ell_{f,I}$ in Section 3 is useful.
Specifically, given a non-decreasing function $f:\R\to\overline{\R}$,
let $I\subset\R$ be any bounded, non-empty interval, and define
$\kappa_{f,I}:\R\to\overline{\R}$ as\[\kappa_{f,I}(x)=\max\lt\{\bigl|f_-(x)-\inf I\bigr|,\bigl|f_+(x)-\sup I\bigr|\rt\}.\]
A few basic properties of $\kappa_{f,I}$ are easily established.
\begin{prp}\label{le1}
Let $f:\R\to\overline{\R}$ be non-decreasing, and $\varnothing\neq
I\subset\R$ a bounded interval. Then, with
$s:=f^{-1}\lt(\frac{1}{2}(\inf I+\sup I)\rt)$, the function
$\kappa_{f,I}$ is non-increasing on $]$$-\infty,s[,$ and
non-decreasing on $]s,+\infty[$. Moreover, $\kappa_{f,I}$ attains a
minimal value whenever $\inf I\le\frac{1}{2}\bigl(f_-(s)+f_+(s)\bigr)\le\sup I$.
\end{prp}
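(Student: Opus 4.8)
\textbf{Proof plan for Proposition~\ref{le1}.}
The plan is to analyze the two pieces $a(x):=\bigl|f_-(x)-\inf I\bigr|$ and $b(x):=\bigl|f_+(x)-\sup I\bigr|$ separately and then take their pointwise maximum. First I would observe that $f_-$ is non-decreasing, so $x\mapsto f_-(x)-\inf I$ is non-decreasing; hence $a$ is non-increasing on the set where $f_-(x)\le\inf I$ and non-decreasing on the set where $f_-(x)\ge\inf I$. Analogously, $b$ is non-increasing where $f_+(x)\le\sup I$ and non-decreasing where $f_+(x)\ge\sup I$. Since $f_-\le f_+$ and $\inf I\le\sup I$, the ``turning behaviour'' of the two pieces is nested, and the natural switch-over point is governed by where $f$ crosses the midpoint $m:=\tfrac12(\inf I+\sup I)$. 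Using Proposition~\ref{prop3a} and the definition $s=f^{-1}(m)$, I would show: for $x<s$ one has $f_+(x)\le m\le\sup I$ (by the characterization of $f^{-1}$, $f(x)\le m$ for $x<s$, so $f_+(x)=\lim_{\ep\downarrow0}f(x+\ep)\le m$ as long as $x+\ep<s$), whence both $a$ and $b$ are non-increasing there, so $\kappa_{f,I}=\max\{a,b\}$ is non-increasing on $]-\infty,s[$; for $x>s$ one has $f_-(x)\ge m\ge\inf I$, so both pieces are non-decreasing and hence so is $\kappa_{f,I}$ on $]s,+\infty[$.

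The only subtle point in the monotonicity argument is the interplay between the strict inequality $x<s$ and the limits defining $f_\pm$; concretely, one must be careful that $f(x)\le m$ for every $x<s$ (immediate from $s=\sup\{y:f(y)\le m\}$ together with $f$ non-decreasing), and that therefore $f_+(x)\le m$ whenever $x<s$, since $f_+(x)=\inf_{y>x}f(y)\le f(x')\le m$ for any $x'$ with $x<x'<s$. A symmetric remark handles $f_-(x)\ge m$ for $x>s$. With these two facts the sign of each piece, and hence the monotonicity of $\kappa_{f,I}$ on each of the two half-lines, follows directly; no case analysis on whether $f$ has a jump at $s$ is needed for the monotonicity claim itself.

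For the ``attains a minimal value'' claim, the hypothesis is $\inf I\le\tfrac12\bigl(f_-(s)+f_+(s)\bigr)\le\sup I$. I would argue that under this hypothesis $\kappa_{f,I}$ does not ``fall off to $-\infty$'' in value (it is $\ge0$ anyway) but rather has its infimum attained at (or arbitrarily near) $s$, and then upgrade ``arbitrarily near'' to ``attained'' using one-sided continuity of $f_\pm$. Precisely: by the monotonicity just proved, $\inf_x\kappa_{f,I}(x)=\lim_{x\uparrow s}\kappa_{f,I}(x)\wedge\kappa_{f,I}(s)\wedge\lim_{x\downarrow s}\kappa_{f,I}(x)$; since $f_-$ is left-continuous and $f_+$ is right-continuous one computes $\lim_{x\uparrow s}a(x)=|f_-(s)-\inf I|$ (using that $f_-$ is left-continuous and, by the previous paragraph, $f_+(x)\le f_-(s)$ for $x<s$, so that $a(x)\to|f_-(s)-\inf I|$ whichever sign occurs), and similarly $\lim_{x\downarrow s}b(x)=|f_+(s)-\sup I|$. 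The midpoint hypothesis $\inf I\le\tfrac12(f_-(s)+f_+(s))\le\sup I$ is exactly what guarantees that at least one of the two values $f_-(s)$, $f_+(s)$ lies in $[\inf I,\sup I]$ side-appropriately, so that the relevant one-sided limit equals $\kappa_{f,I}(s)$ and the infimum is realized at $x=s$ (or, when $f$ jumps at $s$, at an endpoint of the jump). I expect the main obstacle to be precisely this last bookkeeping: carefully checking, in the jump case $f_-(s)<f_+(s)$, that the midpoint condition forces $f_-(s)\le\sup I$ and $f_+(s)\ge\inf I$, so that $\kappa_{f,I}(s)=\max\{|f_-(s)-\inf I|,\,|f_+(s)-\sup I|\}$ coincides with $\min(\lim_{x\uparrow s}\kappa_{f,I},\lim_{x\downarrow s}\kappa_{f,I})$ rather than exceeding it. Everything else is routine manipulation of non-decreasing functions and their one-sided limits, as in Section~\ref{secL}.
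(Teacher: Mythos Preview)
The paper does not supply a proof for this proposition (it merely notes that the properties ``are easily established''), so I assess your argument on its own.

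There is a genuine gap in the monotonicity argument. You assert that for $x<s$ ``both $a$ and $b$ are non-increasing there''. For $b$ this indeed follows from $f_+(x)\le m\le\sup I$, but for $a$ it would require $f_-(x)\le\inf I$, which is \emph{not} a consequence of $f_+(x)\le m$. Concretely, take $f(x)=x$ and $I=[0,4]$: then $m=2=s$, and on $]0,2[$ one has $a(x)=|x-0|=x$, which is strictly increasing. The desired conclusion is still true, but the missing step is to observe that whenever $\inf I<f_-(x)\le m$ (so that $a$ is increasing) one has
\[
a(x)=f_-(x)-\inf I\;\le\; m-\inf I\;=\;\sup I-m\;\le\;\sup I-f_+(x)\;=\;b(x),
\]
so $\kappa_{f,I}(x)=b(x)$ there, and $b$ \emph{is} non-increasing on all of $]-\infty,s[$. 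The symmetric patch applies on $]s,+\infty[$.

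Your attainment argument is essentially correct, though ``coincides with $\min$'' overshoots: one only needs (and gets) $\kappa_{f,I}(s)\le\min\bigl\{\lim_{x\uparrow s}\kappa_{f,I}(x),\lim_{x\downarrow s}\kappa_{f,I}(x)\bigr\}$. With $u:=f_-(s)$ and $v:=f_+(s)$, the definition of $s=f^{-1}(m)$ already gives $u\le m\le v$ (hence $u\le\sup I$ and $v\ge\inf I$ without the midpoint hypothesis). Then $\tfrac12(u+v)\le\sup I$ yields $|v-\sup I|\le\sup I-u=|u-\sup I|$, while $\inf I\le\tfrac12(u+v)$ yields $|u-\inf I|\le v-\inf I=|v-\inf I|$; together these give $\kappa_{f,I}(s)\le L_-$ and $\kappa_{f,I}(s)\le L_+$, so the infimum is attained at $s$. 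Note that strict inequality $\kappa_{f,I}(s)<\min\{L_-,L_+\}$ can occur (e.g.\ $u=\inf I$, $v=\sup I$), so ``coincides'' is too strong.
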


It is worth noting that $\kappa_{f,I}$ may in general not attain its infimum, as the example of $f=15F_{\mu}$, with $\mu=\frac{1}{15}\lambda\lt|_{[0,5]}\rt.+\frac{2}{3}\delta_5$, and $I=[6,8]$ shows, for which $s=5$, and $\kappa_{f,I}(5-)=3$, $\kappa_{f,I}(5)=7$, $\kappa_{f,I}(5+)=9$; correspondingly, $\frac{1}{2}\bigl(f_-(5)+f_+(5)\bigr)\notin I$.

By using functions of the form $\kappa_{f,I}$, the value of $d_{\sf K}(\mu,\nu)$ can easily be bounded above whenever $\nu$ has finite support. For convenience, for every $n\in\N$ let $\Xi_n^+=\lt\{x\in\Xi_n:\ x_{,1}<\ldots<x_{,n}\rt\}$. The proof of the following analogue of Lemma~\ref{lem3d} is straightforward.

\begin{prp}\label{prop5a}
Let $\mu\in\cP$ and $n\in\N$. For every $x\in\Xi_n$ and $p\in\Pi_n$,
\eqb\label{K-in}
d_{\sf K}\bigl( \mu, \delta_x^p \bigr)\le
\max\nolimits_{j=1}^n\kappa_{F_{\mu},[P_{,j-1},P_{,j}]}(x_{,j}),
\eqe 
and equality holds in \eqref{K-in} whenever $x\in\Xi_n^+$.
\end{prp}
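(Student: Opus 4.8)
The plan is to prove the equality assertion first, for $x\in\Xi_n^+$, and then obtain the general inequality by collapsing coincident atoms exactly as in the proof of Lemma~\ref{lem3d}. To establish the equality, I would fix $x\in\Xi_n^+$, so that $F_{\delta_x^p}$ is the step function equal to $P_{,j}$ on $[x_{,j},x_{,j+1}[$ for $j=0,\ldots,n$ (with $x_{,0}=-\infty$, $x_{,n+1}=+\infty$). Since $F_{\mu}$ is non-decreasing and right-continuous, on each such interval its values fill out $[F_{\mu}(x_{,j}),F_{\mu-}(x_{,j+1})]$, and as $v\mapsto|v-P_{,j}|$ is maximised at the endpoints of this range,
\[
\sup\nolimits_{t\in[x_{,j},x_{,j+1}[}\bigl|F_{\mu}(t)-P_{,j}\bigr|=\max\bigl\{\bigl|F_{\mu}(x_{,j})-P_{,j}\bigr|,\bigl|F_{\mu-}(x_{,j+1})-P_{,j}\bigr|\bigr\}.
\]
Maximising over $j=0,\ldots,n$ then expresses $d_{\sf K}(\mu,\delta_x^p)$ as the largest of the $2n$ numbers $|F_{\mu-}(x_{,j})-P_{,j-1}|$ and $|F_{\mu}(x_{,j})-P_{,j}|$, $j=1,\ldots,n$. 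By right-continuity these are exactly the two arguments of the maximum defining $\kappa_{F_{\mu},[P_{,j-1},P_{,j}]}(x_{,j})$ (the first pairing the left limit $F_{\mu-}(x_{,j})$ with $\inf I=P_{,j-1}$, the second pairing the value $F_{\mu}(x_{,j})$ with $\sup I=P_{,j}$), whence $d_{\sf K}(\mu,\delta_x^p)=\max_{j=1}^n\kappa_{F_{\mu},[P_{,j-1},P_{,j}]}(x_{,j})$.

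For an arbitrary $x\in\Xi_n$, I would relabel by grouping equal coordinates as in the proof of Lemma~\ref{lem3d}, producing $y\in\Xi_m^+$ and $q\in\Pi_m$ with $\delta_y^q=\delta_x^p$, $y_{,i}=x_{,j_i}$, and $Q_{,i}=P_{,j_i}$. The equality just proved gives $d_{\sf K}(\mu,\delta_x^p)=d_{\sf K}(\mu,\delta_y^q)=\max_{i=1}^m\kappa_{F_{\mu},[Q_{,i-1},Q_{,i}]}(y_{,i})$, so it remains only to dominate each collapsed term. The interval $[Q_{,i-1},Q_{,i}]=[P_{,j_{i-1}},P_{,j_i}]$ shares its infimum $P_{,j_{i-1}}$ with the leftmost sub-interval $[P_{,j_{i-1}},P_{,j_{i-1}+1}]$ and its supremum $P_{,j_i}$ with the rightmost sub-interval $[P_{,j_i-1},P_{,j_i}]$, while $y_{,i}=x_{,j_{i-1}+1}=x_{,j_i}$. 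Hence the first argument of $\kappa_{F_{\mu},[Q_{,i-1},Q_{,i}]}(y_{,i})$ equals the first argument of $\kappa_{F_{\mu},[P_{,j_{i-1}},P_{,j_{i-1}+1}]}(x_{,j_{i-1}+1})$, and its second argument equals the second argument of $\kappa_{F_{\mu},[P_{,j_i-1},P_{,j_i}]}(x_{,j_i})$. Since $\kappa$ is the maximum of its two nonnegative arguments,
\[
\kappa_{F_{\mu},[Q_{,i-1},Q_{,i}]}(y_{,i})\le\max\nolimits_{j_{i-1}<j\le j_i}\kappa_{F_{\mu},[P_{,j-1},P_{,j}]}(x_{,j}),
\]
and maximising over $i$ yields \eqref{K-in}.

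The hard part will be the bookkeeping in the equality step—correctly matching the left limit and the value of $F_{\mu}$ at each atom $x_{,j}$ against $\inf I$ and $\sup I$ of $[P_{,j-1},P_{,j}]$, respectively, and checking that the per-interval suprema reassemble into precisely these $2n$ numbers. In the reduction, the subtle point is ensuring the comparison runs in the correct direction: the $\kappa$ of the larger, collapsed interval is bounded \emph{above} by the $\kappa$'s of its two boundary sub-intervals, because the extreme weight-values $P_{,j_{i-1}}$ and $P_{,j_i}$ resurface exactly as the relevant endpoints of those sub-intervals. This is a consequence of the endpoint structure, not of any monotonicity of $I\mapsto\kappa_{f,I}$, which in fact fails in general; it is also what permits strict inequality in \eqref{K-in} when atoms coincide, consistent with equality being claimed only for $x\in\Xi_n^+$.
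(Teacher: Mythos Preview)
Your proof is correct and is precisely the argument the paper has in mind: the paper omits the proof, calling it a ``straightforward'' analogue of Lemma~\ref{lem3d}, and your computation---first establishing equality for $x\in\Xi_n^+$ by decomposing the supremum over the half-open intervals $[x_{,j},x_{,j+1}[$ and then collapsing repeated atoms via the $y,q$ relabelling---is exactly that analogue. The bookkeeping matching $|F_{\mu-}(x_{,j})-P_{,j-1}|$ and $|F_{\mu}(x_{,j})-P_{,j}|$ to the two arguments of $\kappa_{F_{\mu},[P_{,j-1},P_{,j}]}(x_{,j})$ is correct (using $F_{\mu+}=F_{\mu}$ by right-continuity), as is your observation that the collapsed $\kappa$-term is dominated by the two boundary sub-interval $\kappa$'s rather than by any monotonicity in $I$.
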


Consider for instance $\mu=\frac{1}{6}\lambda\lt|_{[0,2]}\rt.+\frac{2}{3}\delta_1$, and $x=(1,1)$. Then, for every $p\in\Pi_2$, clearly $d_{\sf K}\lt(\mu,\delta_x^p\rt)=\frac{1}{6}$, whereas $\max\nolimits_{j=1}^2\kappa_{F_{\mu},[P_{,j-1},P_{,j}]}(x_{,j})=\frac{1}{3}+
\lt|p_{,1}-\frac{1}{2}\rt|\ge\frac{1}{3}$. Thus the inequality
\eqref{K-in} may be strict if $x\notin\Xi_n^+$. This, together with
the fact that a function $\kappa_{f,I}$ may not attain its infimum,
suggests that $d_{\sf K}$-approximations with prescribed weights are
potentially somewhat fickle. Still, best approximations do exist and can be
characterized in a spirit similar to Sections 3 and 4. To this end,
given $\mu\in\cP$ and $n\in\N$, for every $p\in\Pi_n$, let
$$
{\sf K}_{\bullet}(p)=d_{\sf K}\lt(\mu,\delta_{\xi(p)}^p\rt)\,\
\text{with}\ \
\xi(p)_{,j}=F_{\mu}^{-1}\lt(\frac{1}{2}\lt(P_{,j-1}+P_{,j}\rt)\rt)\quad
\forall\ j=1,\ldots,n.
$$
Note that ${\sf K}_{\bullet}(p)\le\frac{1}{2}\max_{j=1}^np_{,j}$, and
in fact ${\sf K}_{\bullet}(p)=\frac{1}{2}\max_{j=1}^np_{,j}$ whenever
$\mu\in\cP_{\sf cts}$.

\begin{theorem}\label{thwei-K}
Assume that $\mu\in\cP$, and $n\in\N.$ For every $p\in \Pi_n,$ there exists a best $d_{\sf K}$-approximation of $\mu,$ given $p$. Moreover, $d_{\sf K}\lt(\mu,\delta_x^p\rt)=d_{\sf K}\lt(\mu,\delta^p_{\bullet}\rt)$ if and only if, for every $j=1,\ldots, n$,
\eqb\label{wK}
P_{,j-1} < P_{,j} \enspace  \Longrightarrow \enspace
F_{\mu-}^{-1}\bigl( P_{,j}-{\sf K}_{\bullet}(p)\bigr) \le 
x_{,j}\le F_{\mu}^{-1}\bigl( P_{,j-1}+{\sf K}_{\bullet}(p)\bigr),
\eqe 
and in this case $d_{\sf K}\lt(\mu,\delta_{\bullet}^p\rt)={\sf K}_{\bullet}(p).$
\end{theorem}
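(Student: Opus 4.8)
The plan is to prove that the explicit configuration $\delta_{\xi(p)}^p$ from the definition of ${\sf K}_\bullet(p)$ is itself a best $d_{\sf K}$-approximation of $\mu$, given $p$; this yields both the existence statement and the value $d_{\sf K}(\mu,\delta_\bullet^p)={\sf K}_\bullet(p)$ at once. The crux is a \emph{nearest-value} property of $\xi(p)$. For $j=1,\ldots,n$ set $m_j=\frac12(P_{,j-1}+P_{,j})$, so that $\xi(p)_{,j}=F_\mu^{-1}(m_j)$, and put $m_0:=P_{,0}$, $m_{n+1}:=P_{,n}$. Note first that $F_{\delta_x^p}(t)=\sum_{j:\,x_{,j}\le t}p_{,j}=P_{,\#\{j:\,x_{,j}\le t\}}$ for \emph{every} $x\in\Xi_n$ and $p\in\Pi_n$, so $F_{\delta_x^p}$ always takes its values in the finite set $\{P_{,0},\ldots,P_{,n}\}$. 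The claim is that $F_{\delta_{\xi(p)}^p}(t)$ is, for each $t$, a point of this set nearest to $F_\mu(t)$: if $k=\#\{j:\,\xi(p)_{,j}\le t\}$ then $m_k\le F_\mu(t)\le m_{k+1}$, i.e.\ $F_\mu(t)$ lies in the closed ``Voronoi cell'' of $P_{,k}$. This is seen from two elementary facts about quantile functions: $\xi(p)_{,k}\le t$ forces $F_\mu(t)\ge m_k$, while $\xi(p)_{,k+1}>t$ forces $F_\mu(t)\le m_{k+1}$ (boundary ties, where $F_\mu(t)=m_l$ for some $l$, are harmless, since then both $P_{,l-1}$ and $P_{,l}$ are nearest to $F_\mu(t)$).

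Granting this, the lower bound is immediate: for any $x\in\Xi_n$, since $F_{\delta_x^p}(t)\in\{P_{,0},\ldots,P_{,n}\}$,
$$
\bigl|F_\mu(t)-F_{\delta_x^p}(t)\bigr| \ \ge\ \min\nolimits_{0\le l\le n}\bigl|F_\mu(t)-P_{,l}\bigr| \ =\ \bigl|F_\mu(t)-F_{\delta_{\xi(p)}^p}(t)\bigr| \qquad \forall\,t\in\R\,,
$$
hence $d_{\sf K}(\mu,\delta_x^p)\ge d_{\sf K}(\mu,\delta_{\xi(p)}^p)={\sf K}_\bullet(p)$. As $\xi(p)\in\Xi_n$, this shows at once that a best $d_{\sf K}$-approximation of $\mu$, given $p$, exists (namely $\delta_{\xi(p)}^p$), and that $d_{\sf K}(\mu,\delta_\bullet^p)={\sf K}_\bullet(p)$.

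It remains to characterize the minimizers, i.e.\ to show that $d_{\sf K}(\mu,\delta_x^p)={\sf K}_\bullet(p)=:K$ if and only if \eqref{wK} holds. For necessity, suppose $d_{\sf K}(\mu,\delta_x^p)\le K$ and fix $j$ with $P_{,j-1}<P_{,j}$. Group $x$ into its maximal blocks of equal coordinates as in the proof of Lemma~\ref{lem3d}, and let $x_{,a}=\cdots=x_{,b}$ (with $a\le j\le b$) be the block containing index $j$. Immediately to the left of $x_{,j}$ the staircase $F_{\delta_x^p}$ equals $P_{,a-1}\le P_{,j-1}$, and at $x_{,j}$ it equals $P_{,b}\ge P_{,j}$; applying $|F_\mu-F_{\delta_x^p}|\le K$ just left of, and at, $x_{,j}$ then gives $F_{\mu-}(x_{,j})\le P_{,a-1}+K\le P_{,j-1}+K$ and $F_\mu(x_{,j})\ge P_{,b}-K\ge P_{,j}-K$. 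Rewriting these by means of the equivalences $F_{\mu-}(t)\le c\Leftrightarrow t\le F_\mu^{-1}(c)$ and $F_\mu(t)\ge c\Leftrightarrow t\ge F_{\mu-}^{-1}(c)$ (consequences of Proposition~\ref{prop3a}) produces exactly the two bounds on $x_{,j}$ in \eqref{wK}. For sufficiency, assume \eqref{wK} and relabel $x$ as $y\in\Xi_m^+$, $q\in\Pi_m$ with $Q_{,i}=P_{,j_i}$; using \eqref{wK} at the first and last ``genuine'' index (those $j$ with $P_{,j-1}<P_{,j}$) of each block, together with the monotonicity of $F_\mu$ and $F_{\mu-}$, one obtains $|F_{\mu-}(y_{,i})-Q_{,i-1}|\le K$ and $|F_\mu(y_{,i})-Q_{,i}|\le K$, that is, $\kappa_{F_\mu,[Q_{,i-1},Q_{,i}]}(y_{,i})\le K$, for every $i=1,\ldots,m$; by Proposition~\ref{prop5a} this gives $d_{\sf K}(\mu,\delta_x^p)\le K$, and the opposite inequality is the lower bound already obtained.

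The only genuinely delicate point is the nearest-value property: it must be verified with care because atoms of $\mu$ render $F_\mu$ and $F_\mu^{-1}$ discontinuous, and some weights $p_{,j}$ may vanish (so the $P_{,j}$ need not be distinct, nor the $\xi(p)_{,j}$ strictly increasing). Once it is in hand, everything else reduces to the kind of block-bookkeeping already carried out in the proofs of Lemma~\ref{lem3d} and Theorem~\ref{thloc-K}.
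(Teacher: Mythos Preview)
Your proof is correct and takes a genuinely different route from the paper's for the crucial lower bound $d_{\sf K}(\mu,\delta_x^p)\ge{\sf K}_\bullet(p)$. The paper does \emph{not} argue pointwise: it first reduces to $\min_j p_{,j}>0$, relabels $\xi(p)$ into distinct values $\eta\in\Xi_m^+$, selects a single witness index $i$ where $\kappa_{F_\mu,[Q_{,i-1},Q_{,i}]}(\eta_{,i})$ attains its maximum, and then runs a three-way case analysis (on whether $\eta_{,i}\le x_{,j_{i-1}}$, $x_{,j_{i-1}}<\eta_{,i}\le x_{,j_{i-1}+1}$, or $\eta_{,i}>x_{,j_{i-1}+1}$) together with the sandwich inequalities \eqref{eq5102}--\eqref{eq5103} to establish \eqref{eq5101}. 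Your nearest-value argument bypasses all of this: once you observe that $F_{\delta_{\xi(p)}^p}(t)$ is always a closest point of $\{P_{,0},\ldots,P_{,n}\}$ to $F_\mu(t)$, the pointwise domination $|F_\mu-F_{\delta_x^p}|\ge|F_\mu-F_{\delta_{\xi(p)}^p}|$ and hence the lower bound are immediate. This is conceptually cleaner and makes the role of the ``midpoint quantile'' choice $\xi(p)_{,j}=F_\mu^{-1}(m_j)$ transparent. For the characterization, the paper routes through Theorem~\ref{thloc-K} and then inverts; your direct block bookkeeping is equivalent in spirit.

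Two small points you leave implicit in the sufficiency direction deserve a sentence each. First, \eqref{wK} applied at the first and last genuine indices of a block yields only $F_{\mu-}(y_{,i})\le Q_{,i-1}+K$ and $F_\mu(y_{,i})\ge Q_{,i}-K$; the reverse inequalities needed for $\kappa_{F_\mu,[Q_{,i-1},Q_{,i}]}(y_{,i})\le K$ come from the \emph{adjacent} nonzero blocks via $F_\mu(y_{,i})\le F_{\mu-}(y_{,i'})\le Q_{,i'-1}+K=Q_{,i}+K$ (and symmetrically), where $i'$ is the next block with positive weight. Second, blocks with $q_{,i}=0$ may have no genuine index at all; but since $y_{,i-1}<y_{,i}<y_{,i+1}$ forces $F_{\mu\pm}(y_{,i})\in[F_\mu(y_{,i-1}),F_{\mu-}(y_{,i+1})]$ and $Q_{,i-1}=Q_{,i}$, one gets $\kappa_i\le\max(\kappa_{i-1},\kappa_{i+1})$, so such blocks never dominate (alternatively, delete zero weights first, as the paper does).
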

\begin{proof}
Note first that deleting all zero entries of $p$ does not change the value of ${\sf K}_{\bullet}(p)$, and hence does not affect \eqref{wK}, nor of course the asserted existence of a best $d_{\sf K}$-approximation, given $p$. Thus assume $\min_{j=1}^np_{,j}>0$ throughout. For convenience, write $\xi(p)$ simply as $\xi$, and for every $x\in\Xi_n$, write $F_{\delta_x^p}$ as $G$. To prove the existence of a best $d_{\sf K}$-approximation of $\mu$, given $p$, as well as $d_{\sf K}\lt(\mu,\delta_{\bullet}^p\rt)={\sf K}_{\bullet}(p)$, clearly it suffices to show that \eqb\label{eq5100}
d_{\sf K}\lt(\mu,\delta_x^p\rt)\ge d_{\sf K}\lt(\mu,\delta_{\xi}^p\rt)\quad\, \forall\ x\in\Xi_n.
\eqe 
Similarly to the proof of Lemma~\ref{lem3d}, label $\xi$ uniquely as
\begin{align*}
\xi_{,1} = \ldots = \xi_{,j_1} & < \xi_{,j_1+1} = \ldots = \xi_{,j_2}< \xi_{,j_2 +
1} = \ldots \\
& < \ldots = \xi_{,j_{m-1}}< \xi_{,j_{m-1}+1} = \ldots = \xi_{,j_m}
\, ,
\end{align*}
with integers $i\le j_i \le m$ for $1\le i\le m$, and $j_0 = 0$, $j_m
= n$, and define $\eta \in \Xi_m$ and $ q\in \Pi_m$ as
$\eta_{,i} = \xi_{,j_i}$ and $q_{,i} = P_{,j_i} - P_{,
  j_{i-1}}$, respectively. With this, $\delta_{\xi}^p=\delta_{\eta}^q$, and by Proposition~\ref{prop5a},
\[{\sf K}_{\bullet}(p)=d_{\sf K}\lt(\mu,\delta_{\eta}^q\rt)=\max\nolimits_{i=1}^m\kappa_{F_{\mu},\lt[Q_{,i-1
},Q_{,i}\rt]}\lt(\eta_{,i}\rt).\]
Pick $i$ such that $\kappa_{F_{\mu},\lt[Q_{,i-1}
,Q_{,i}\rt]}\lt(\eta_{,i}\rt)={\sf K}_{\bullet}(p)$, that is,
$$\max\lt\{\lt|F_{\mu
-}\lt(\eta_{,i}\rt)-Q_{,i-1}\rt|,\lt|F_{\mu}\lt(\eta_{,i}\rt)-Q_{,i}\rt|\rt\}=
{\sf K}_{\bullet}(p).$$
Clearly, to establish \eqref{eq5100} it is enough to show that
\eqb\label{eq5101}\max\lt\{\lt|F_{\mu
-}\lt(\eta_{,i}\rt)-G_-\lt(\eta_{,i}\rt)\rt|,\lt|F_{\mu}\lt(\eta_{,i}\rt)-G
\lt(\eta_{,i}\rt)\rt|\rt\}\ge{\sf K}_{\bullet}(p)\eqe and this will now be done. To this end, notice that by the definition of $\eta$,
\eqb\label{eq5102}
\frac{1}{2}\lt(P_{,j_{i-1}-1}+P_{,j_{i-1}}\rt)\le F_{\mu-}\lt(\eta_{,i}\rt)\le\frac{1}{2}\lt(P_{,j_{i-1}}+P_{,
j_{i-1}+1}\rt),\eqe but also
\eqb\label{eq5103}
\frac{1}{2}\lt(P_{,j_i-1}+P_{,j_i}\rt)\le F_{\mu}\lt(\eta_{,i}\rt)\le\frac{1}{2}\lt(P_{,j_i}+P_{,
j_i+1}\rt),\eqe
with the convention that $P_{,-1}=0$ and $P_{,n+1}=1$.

Assume first that ${\sf K}_{\bullet}(p)=\lt|F_{\mu
-}(\eta_{,i})-Q_{,i-1}\rt|$. If $\eta_{,i}\le x_{,j_{i-1}}$ then
$G_-\lt(\eta_{,i}\rt)\le P_{,j_{i-1}-1}$, and hence
$F_{\mu-}(\eta_{,i})-G_-\lt(\eta_{,i}\rt)\ge
F_{\mu-}\lt(\eta_{,i}\rt)-P_{,j_{i-1}}$, but also, by \eqref{eq5102},
\begin{align*}
F_{\mu-}\lt(\eta_{,i}\rt)-G_-\lt(\eta_{,i}\rt) & \ge
F_{\mu-}\lt(\eta_{,i}\rt)-P_{,j_{i-1}} -
\lt(2F_{\mu-}\lt(\eta_{,i}\rt)-P_{,j_{i-1}-1}-P_{,j_{i-1}}\rt)\\
& =P_{,j_{i-1}}-F_{\mu-}\lt(\eta_{,i}\rt) \, ,
\end{align*}
and consequently \[F_{\mu-}\lt(\eta_{,i}\rt)-G_-\lt(\eta_{,i}\rt)\ge \lt|F_{\mu
-}\lt(\eta_{,i}\rt)-P_{,j_{i-1}}\rt|=\lt|F_{\mu
-}\lt(\eta_{,i}\rt)-Q_{,i-1}\rt|={\sf K}_{\bullet}(p).\] If
$x_{,j_{i-1}}<\eta_{,i}\le x_{,j_{i-1}+1}$ then
$G_-\lt(\eta_{,i}\rt)=P_{,j_{i-1}}$ and hence 
$$\lt|F_{\mu-}\lt(\eta_{,i}\rt)-G_-\lt(\eta_{,i}\rt)\rt|={\sf
  K}_{\bullet}(p).
$$
Finally, if $\eta_{,i}>x_{,j_{i-1}+1}$ then $G_-\lt(\eta_{,i}\rt)\ge P_{,j_{i-1}+1}$, and hence $G_-\lt(\eta_{,i}\rt)-F_{\mu-}\lt(\eta_{,i}\rt)\ge P_{,j_{i-1}}-F_{\mu-}\lt(\eta_{,i}\rt)$, but also, again by \eqref{eq5102},
\begin{align*}
G_-\lt(\eta_{,i}\rt)-F_{\mu-}\lt(\eta_{,i}\rt) & \ge P_{,j_{i-1}+1}-F_{\mu-}\lt(\eta_{,i}\rt)-\lt(P_{,j_
{i-1}}+P_{,j_{i-1}+1}-2F_{\mu-}\lt(\eta_{,i}\rt)\rt)\\
& =F_{\mu-}\lt(\eta_{,i}\rt)
-P_{,j_{i-1}},
\end{align*}
and therefore \[G_-\lt(\eta_{,i}\rt)-F_{\mu-}\lt(\eta_{,i}\rt)\ge\lt|F_{\mu-}\lt(\eta_{,i}\rt)
-P_{,j_{i-1}}\rt|={\sf K}_{\bullet}(p).\]
Thus \eqref{eq5101} holds whenever ${\sf K}_{\bullet}(p)=\lt|F_{\mu
-}\lt(\eta_{,i}\rt)-Q_{,i-1}\rt|$.

Next assume that ${\sf
  K}_{\bullet}(p)=\lt|F_{\mu}\lt(\eta_{,i}\rt)-Q_{,i}\rt|$. Utilizing
\eqref{eq5103} instead of \eqref{eq5102}, completely analogous
arguments show that
$\lt|F_{\mu}\lt(\eta_{,i}\rt)-G\lt(\eta_{,i}\rt)\rt|\ge{\sf
  K}_{\bullet}(p)$ in this case as well, which again implies
\eqref{eq5101}. The latter therefore holds in either case.
As seen earlier, this proves the existence of a best $d_{\sf K}$-approximation of $\mu$, given $p$, and also that $d_{\sf K}\lt(\mu,\delta_{\bullet}^p\rt)={\sf K}_{\bullet}(p)$.

Finally, with $y\in\Xi_m^+$ and $p\in\Pi_m$ as in the proof of
Lemma~\ref{lem3d}, observe that $d_{\sf
  K}\lt(\mu,\delta_x^p\rt)= {\sf K}_{\bullet}(p)$ if and only if
$\max_{i=1}^m\kappa_{F_{\mu},\lt[Q_{,i-1},Q_{,i}\rt]}(y_{,i})={\sf
  K}_{\bullet}(p)$, by Proposition~\ref{prop5a}. As seen in the proof
of Theorem~\ref{thloc-K}, this means that
$$
F_{\mu-}(y_{,i+1})-{\sf K}_{\bullet}(p)\le Q_{,i}\le
F_{\mu}(y_{,i})+{\sf K}_{\bullet}(p)\quad \forall\ i=0,\ldots,m\, ,
$$
or equivalently,
$$
F_{\mu-}^{-1}\lt(Q_{,i}-{\sf K}_{\bullet}(p)\rt)\le y_{,i}\le
F_{\mu}^{-1}\lt(Q_{,i-1}+{\sf K}_{\bullet}(p)\rt)\quad \forall\
i=1,\ldots,m\, ,
$$
which in turn is equivalent to the validity of \eqref{wK} for every $j$. 
\end{proof}
\begin{cor}\label{cont}
Assume $\mu\in\cP_{\sf cts}$, and $n\in\N$.
Then $d_{\sf K}\lt(\mu,\delta_x^{u_n}\rt)\ge\frac{1}{2}n^{-1}$ for all
$x\in\Xi_n$, with equality holding if and only if
$$
F_{\mu-}^{-1}\lt(\frac{2j-1}{2n}\rt)\le x_{,j}\le
F_{\mu}^{-1}\lt(\frac{2j-1}{2n}\rt) \quad\forall j=1,\ldots,n.
$$
\end{cor}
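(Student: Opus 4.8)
The plan is to read off Corollary~\ref{cont} from Theorem~\ref{thwei-K} by specializing to the uniform weight vector $p = u_n = (n^{-1},\ldots,n^{-1})$. First I would record the two ingredients that do the work. Since $\mu\in\cP_{\sf cts}$ and every weight of $u_n$ equals $n^{-1}$, the observation stated just before Theorem~\ref{thwei-K} gives ${\sf K}_{\bullet}(u_n) = \tfrac12\max_{j=1}^n (u_n)_{,j} = \tfrac{1}{2n}$; the hypothesis $\mu\in\cP_{\sf cts}$ is exactly what upgrades the general inequality ${\sf K}_{\bullet}(u_n)\le\tfrac{1}{2n}$ to an equality. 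Secondly, for $p = u_n$ one has $P_{,j} = j/n$ for $j = 0,\ldots,n$, so $P_{,j-1} < P_{,j}$ for every $j$ and no implication in \eqref{wK} is vacuous, and moreover $\tfrac12(P_{,j-1}+P_{,j}) = (2j-1)/(2n)$.

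For the lower bound I would invoke the existence part of Theorem~\ref{thwei-K}: a best $d_{\sf K}$-approximation $\delta_{\bullet}^{u_n}$ of $\mu$, given $u_n$, exists and satisfies $d_{\sf K}(\mu,\delta_{\bullet}^{u_n}) = {\sf K}_{\bullet}(u_n) = \tfrac{1}{2n}$; by the defining minimality of $\delta_{\bullet}^{u_n}$ among all $\delta_y^{u_n}$ with $y\in\Xi_n$, this yields $d_{\sf K}(\mu,\delta_x^{u_n})\ge\tfrac{1}{2n}$ for every $x\in\Xi_n$. For the equality case I would substitute ${\sf K}_{\bullet}(u_n) = \tfrac{1}{2n}$, $P_{,j} = j/n$, and $P_{,j-1} = (j-1)/n$ into condition~\eqref{wK}: a one-line computation gives $P_{,j} - {\sf K}_{\bullet}(u_n) = (2j-1)/(2n) = P_{,j-1} + {\sf K}_{\bullet}(u_n)$, so \eqref{wK} collapses to
$$
F_{\mu-}^{-1}\!\left(\tfrac{2j-1}{2n}\right)\le x_{,j}\le F_{\mu}^{-1}\!\left(\tfrac{2j-1}{2n}\right)\qquad\forall\, j=1,\ldots,n .
$$
Since Theorem~\ref{thwei-K} says \eqref{wK} holds precisely when $d_{\sf K}(\mu,\delta_x^{u_n}) = d_{\sf K}(\mu,\delta_{\bullet}^{u_n}) = {\sf K}_{\bullet}(u_n) = \tfrac{1}{2n}$, this displayed condition is exactly the condition for equality in the bound, which completes the argument.

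I do not expect a genuine obstacle here: all the analytic content has been placed into Theorem~\ref{thwei-K}, and the corollary is pure bookkeeping once ${\sf K}_{\bullet}(u_n)$ is identified. The single point deserving care is the evaluation ${\sf K}_{\bullet}(u_n) = \tfrac{1}{2n}$ --- this, and this alone, is where continuity of $\mu$ enters; dropping that hypothesis would leave the asymmetric constraints $F_{\mu-}^{-1}\bigl(P_{,j} - {\sf K}_{\bullet}(u_n)\bigr)\le x_{,j}\le F_{\mu}^{-1}\bigl(P_{,j-1} + {\sf K}_{\bullet}(u_n)\bigr)$ coming from \eqref{wK}, without the clean symmetric simplification to the midpoints $(2j-1)/(2n)$.
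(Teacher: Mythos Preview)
Your proposal is correct and is precisely the argument the paper intends: Corollary~\ref{cont} is stated without proof immediately after Theorem~\ref{thwei-K}, and the intended derivation is exactly the specialization $p=u_n$ together with the observation ${\sf K}_{\bullet}(u_n)=\tfrac12\max_{j}p_{,j}=\tfrac{1}{2n}$ for $\mu\in\cP_{\sf cts}$ recorded just before that theorem. Your bookkeeping (in particular the identity $P_{,j}-\tfrac{1}{2n}=P_{,j-1}+\tfrac{1}{2n}=\tfrac{2j-1}{2n}$ collapsing \eqref{wK} to the displayed condition) matches the paper's implicit reasoning.
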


By combining Theorems \ref{thloc-K} and \ref{thwei-K}, it is possible to
characterize best $d_{\sf K}$-approximations of $\mu \in \cP$ as
well. For this, associate with every non-decreasing function $f:\R\to\overline{\R}$  and every number $a\ge 0$ a
map $S_{f,a}:\overline{\R} \to \overline{\R}$, given by
$$
S_{f, a} (x) = f_+ \lt(f^{-1}(x+a)\rt) +a \quad
\forall\ x \in \overline{\R} \, .
$$
This map is a true analogue of $T_{f,a}$ in Section 3, and in fact, Proposition~\ref{prop3g}, with $T_{f,a}$ replaced by $S_{f,a}$, remains fully valid. Identical reasoning then shows that
$$
{\sf K}_{\bullet}^{\bullet, n} := \min \lt\{a \ge 0 : S_{F_{\mu},
  a}^{[n]}(0) \ge 1\rt\} < +\infty \, ;
$$
again, $\lt({\sf K}_{\bullet}^{\bullet, n}\rt)$ is
non-increasing, $n{\sf K}_{\bullet}^{\bullet,n}\le \frac12$ for every
$n$, and ${\sf K}_{\bullet}^{\bullet, n}=0$ if and only if $\#
\mbox{\rm supp}\, \mu \le n$. Notice that if $\mu\in\cP_{\sf cts}$ then
$$
S_{F_{\mu}, a}(x) = \left\{
\begin{array}{ll}
a & \mbox{\rm if } x< - a \, , \\
2a + x & \mbox{\rm if } -a \le
x <1 -a \, , \\
a + 1 & \mbox{\rm if } x\ge 1 - a \, ,
\end{array}
\right.
$$
from which it is clear that ${\sf K}_{\bullet}^{\bullet, n}=\frac{1}{2}n^{-1}$.

\begin{theorem}\label{bestK}
Let $\mu \in \cP$ and $n\in \N$. There exists a best $d_{\sf
  K}$-approximation of $\mu$, and $d_{\sf K} \lt(\mu, \delta_{\bullet}^{\bullet, n}\rt) =
{\sf K}_{\bullet}^{\bullet, n}$. Moreover, for every $x\in \Xi_n$ and $p\in \Pi_n$,
the following are equivalent:
\noindent\begin{enumerate}
\item[{\rm(i)}] $d_{\sf K} \lt(\mu, \delta_x^p \rt) = d_{\sf K}
  \lt(\mu, \delta_{\bullet}^{\bullet, n}\rt)$;
\item[{\rm (ii)}] all implications in {\rm (\ref{lK})} are valid with ${\sf
    K}^{\bullet}(x)$ replaced by ${\sf K}^{\bullet, n}_{\bullet}$;
\item[{\rm (iii)}] all implications in {\rm (\ref{wK})} are valid with ${\sf
    K}_{\bullet}(p)$ replaced by ${\sf K}^{\bullet, n}_{\bullet}$.
\end{enumerate}
\end{theorem}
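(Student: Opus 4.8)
The plan is to follow the proof of Theorem~\ref{thm3h} essentially verbatim, substituting $d_{\sf K}$ for $d_{\sf L}$, the map $S_{F_\mu,a}$ for $T_{F_\mu,a}$, the quantities ${\sf K}^\bullet(x),{\sf K}_\bullet(p),{\sf K}_\bullet^{\bullet,n}$ for ${\sf L}^\bullet(x),{\sf L}_\bullet(p),{\sf L}_\bullet^{\bullet,n}$, and invoking Theorems~\ref{thloc-K} and~\ref{thwei-K} wherever the $d_{\sf L}$-argument used Theorems~\ref{thm3e} and~\ref{prop3f}. The one preliminary fact I would isolate first is a reformulation extracted from the proofs of Theorems~\ref{thloc-K} and~\ref{thwei-K}: for every $a\ge 0$, $x\in\Xi_n$, $p\in\Pi_n$, the inequality $d_{\sf K}(\mu,\delta_x^p)\le a$ is equivalent to~\eqref{lK} holding with ${\sf K}^\bullet(x)$ replaced by $a$, and also to~\eqref{wK} holding with ${\sf K}_\bullet(p)$ replaced by $a$. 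Indeed, passing to the collapsed data $y\in\Xi_m^+$, $q\in\Pi_m$ as in the proof of Lemma~\ref{lem3d}, one has $d_{\sf K}(\mu,\delta_x^p)=\max_{i=0}^m\max\{|F_\mu(y_{,i})-Q_{,i}|,|F_{\mu-}(y_{,i+1})-Q_{,i}|\}$, so $d_{\sf K}(\mu,\delta_x^p)\le a$ iff $F_{\mu-}(y_{,i+1})-a\le Q_{,i}\le F_\mu(y_{,i})+a$ for all $i$; translating this back to $x,p$, resp.\ applying the inverse-function identities of Proposition~\ref{prop3a}, yields the two stated equivalences exactly as in those proofs.

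For the lower bound, fix $x\in\Xi_n$, $p\in\Pi_n$. By Theorem~\ref{thwei-K}, $d_{\sf K}(\mu,\delta_x^p)\ge d_{\sf K}(\mu,\delta_{\bullet}^p)={\sf K}_\bullet(p)$, and the best approximation $\delta_{\xi(p)}^p$ of $\mu$ given $p$ satisfies~\eqref{wK} with constant ${\sf K}_\bullet(p)$. Reading those inequalities through the definition of $S_{F_\mu,a}$ — using $F_{\mu-}(F_\mu^{-1}(t))\le t\le F_\mu(F_\mu^{-1}(t))$ — gives $P_{,j}\le S_{F_\mu,{\sf K}_\bullet(p)}(P_{,j-1})$ for every $j$, exactly as in the proof of Theorem~\ref{thm3h}. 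Since $P_{,0}=0$ and both $a\mapsto S_{F_\mu,a}^{[n]}(0)$ and $x\mapsto S_{F_\mu,a}^{[n]}(x)$ are non-decreasing (the analogue of Proposition~\ref{prop3g}, valid for $S_{f,a}$ as the text asserts), iterating yields $1=P_{,n}\le S_{F_\mu,{\sf K}_\bullet(p)}^{[n]}(0)$, whence ${\sf K}_\bullet^{\bullet,n}\le{\sf K}_\bullet(p)$ by the definition of ${\sf K}_\bullet^{\bullet,n}$. Thus $d_{\sf K}(\mu,\delta_x^p)\ge{\sf K}_\bullet^{\bullet,n}$ for all $x,p$.

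For the matching upper bound, which also yields existence, set $K={\sf K}_\bullet^{\bullet,n}$ and $m=\min\{i\ge 1:S_{F_\mu,K}^{[i]}(0)\ge 1\}$, so $1\le m\le n$ and ${\sf K}_\bullet^{\bullet,m}=K$. Define $q\in\Pi_m$ via $Q_{,i}=S_{F_\mu,K}^{[i]}(0)$ for $i=0,\dots,m-1$ and $Q_{,m}=1$, and $\eta_{,i}=F_\mu^{-1}(Q_{,i-1}+K)$ for $i=1,\dots,m$; then $\eta\in\Xi_m$, and a direct check (again using the displayed inequalities for $F_\mu\circ F_\mu^{-1}$ together with $Q_{,i}=S_{F_\mu,K}(Q_{,i-1})$) shows that $\eta,q$ satisfy~\eqref{wK} with constant $K$, so $d_{\sf K}(\mu,\delta_\eta^q)\le K$ by the reformulation of the first paragraph. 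Together with the lower bound this shows that $\delta_\eta^q$ (padded by $n-m$ null atoms if $m<n$) is a best $d_{\sf K}$-approximation of $\mu$ and that $d_{\sf K}(\mu,\delta_{\bullet}^{\bullet,n})={\sf K}_\bullet^{\bullet,n}$. (Existence can alternatively be deduced from the weak lower semicontinuity of $\nu\mapsto d_{\sf K}(\mu,\nu)$ and the weak compactness of $\{\nu\in\cP:\#\,\mathrm{supp}\,\nu\le n\}$.)

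Finally, for the equivalence of (i)--(iii): if (i) holds then $\delta_x^p$ is a best approximation of $\mu$ given $p$ and given $x$, so Theorems~\ref{thwei-K} and~\ref{thloc-K} give ${\sf K}_\bullet(p)={\sf K}^\bullet(x)=d_{\sf K}(\mu,\delta_x^p)={\sf K}_\bullet^{\bullet,n}$ together with~\eqref{wK} and~\eqref{lK}, i.e.\ (iii) and (ii); conversely, (ii) or (iii) asserts that~\eqref{lK} or~\eqref{wK} holds with constant ${\sf K}_\bullet^{\bullet,n}$, so the first-paragraph reformulation gives $d_{\sf K}(\mu,\delta_x^p)\le{\sf K}_\bullet^{\bullet,n}$, which combined with the lower bound gives (i). I expect the only genuinely delicate step to be the verification in the third paragraph that the explicitly constructed $\delta_\eta^q$ attains $K$: when $F_\mu$ has jumps (i.e.\ $\mu$ has atoms) consecutive $\eta_{,i}$ may coincide, so the crude estimate of Proposition~\ref{prop5a} need not be tight and one must pass to the collapsed, strictly increasing data and check the bound atom by atom, in the careful manner of the proof of Theorem~\ref{thwei-K}; this is exactly where the non-attainment phenomenon for $\kappa_{f,I}$ noted before Proposition~\ref{prop5a} must be dealt with. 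Everything else is a faithful transcription of the arguments of Section~3.
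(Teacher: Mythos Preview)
Your proposal is correct and follows the paper's own template of transcribing the proof of Theorem~\ref{thm3h}. The one point of divergence is how existence is handled: because $d_{\sf K}$ is merely lower semicontinuous on $\cP$ (not continuous, as $d_{\sf L}$ is), the paper does \emph{not} rely on the explicit construction for existence but instead first establishes existence separately via a compactness argument on $\Xi_n$ together with lower semicontinuity of $x\mapsto{\sf K}^{\bullet}(x)$, and only then invokes the Theorem~\ref{thm3h} machinery for the value and the equivalences. You instead obtain existence directly from your constructed $\delta_\eta^q$ (and mention the semicontinuity route parenthetically). Your route is slightly more economical, since your lower bound $d_{\sf K}(\mu,\delta_x^p)\ge{\sf K}_\bullet(p)\ge{\sf K}_\bullet^{\bullet,n}$ works for \emph{arbitrary} $x,p$ via Theorem~\ref{thwei-K} and does not presuppose the existence of an unconstrained minimizer, whereas the paper's argument (as in Theorem~\ref{thm3h}) first fixes a minimizer and then derives the iteration inequality from it. You are also right that the only place requiring genuine care is verifying $d_{\sf K}(\mu,\delta_\eta^q)\le K$ when $\mu$ has atoms and the $\eta_{,i}$ may coincide; passing to the collapsed strictly increasing data, as you indicate, handles this.
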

\begin{proof}
Note that once the existence of a best $d_{\sf K}$-approximation
of $\mu$ is established, the proof is virtually identical to that of
Theorem~\ref{thm3h}. Thus, only the existence is to be proved here.
To this end, let $a=\inf\nolimits_{x\in\Xi_n,p\in\Pi_n}d_{\sf
  K}\lt(\mu,\delta_x^p\rt)$, and pick sequences $(x_k)$ and $(p_k)$ in
$\Xi_n$ and $\Pi_n$, respectively, with the property that 
$\lim_{k\to \infty} d_{\sf K}\lt(\mu,\delta_{x_k}^{p_k}\rt)=  a$. By
the compactness of $\Xi_n$, assume w.o.l.g.\ that $\lim_{k\to \infty}
x_k\ = \eta \in\Xi_n$. Since $a\le {\sf K}^{\bullet}(x_k)\le d_{\sf
  K}\lt(\mu,\delta_{x_k}^{p_k}\rt)$, it suffices to show that ${\sf
  K}^{\bullet}(\eta)\le a$. To see the latter, assume that
$\eta_{,j}<\eta_{,j+1}$ for any $j=1,\ldots,n-1$. Then
$x_{k,j}<x_{k,j+1}$ for all sufficiently large $k$, and hence by
Theorem~\ref{thloc-K}, $F_{\mu-}(x_{k,j+1})-F_{\mu}(x_{k,j})\le2{\sf
  K}^{\bullet}(x_k)$, which in turn implies
$$
F_{\mu-}(\eta_{,j+1})-F_{\mu}(\eta_{,j})\le\liminf\nolimits_{k\to
\infty}\lt(F_{\mu-}(x_{k,j+1})-F_{\mu}(x_{k,j})\rt)  \le2 a \, .
$$
Since, similarly, $F_{\mu-}\lt(\eta_{,1}\rt)\le a$ and
$1-F_{\mu}\lt(\eta_{,n}\rt)\le a$, it follows that ${\sf
  K}^{\bullet}(\eta)\le a$, as claimed. 
\end{proof}
\begin{cor}\label{contbest}
Assume $\mu\in\cP_{\sf cts}$, and $n\in\N$. Then
${\sf K}_{\bullet}^{\bullet,n}={\sf K}_{\bullet}(u_n)=\frac{1}{2}n^{-1}$,
and $\delta_x^p$ with $x\in\Xi_n$, $p\in\Pi_n$ is a best $d_{\sf
  K}$-approximation of $\mu$ if and only if it is a best uniform
$d_{\sf K}$-approximation of $\mu$.
\end{cor}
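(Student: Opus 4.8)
The plan is to reduce the corollary to Theorems~\ref{thwei-K} and \ref{bestK} together with two elementary facts already recorded in the text: for $\mu\in\cP_{\sf cts}$ one has ${\sf K}_{\bullet}(p)=\frac12\max_{j=1}^n p_{,j}$ for every $p\in\Pi_n$, and ${\sf K}_{\bullet}^{\bullet,n}=\frac12 n^{-1}$. The latter is immediate from the piecewise formula for $S_{F_{\mu},a}$ displayed just before Theorem~\ref{bestK}: on $[-a,1-a[$ the map acts as $x\mapsto x+2a$, so $S_{F_{\mu},a}^{[n]}(0)=2na$ as long as $(2n-1)a<1$, and hence the least $a\ge0$ with $S_{F_{\mu},a}^{[n]}(0)\ge1$ is $a=\frac1{2n}$. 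The first assertion of the corollary then follows at once, since ${\sf K}_{\bullet}(u_n)=\frac12\max_j(u_n)_{,j}=\frac1{2n}={\sf K}_{\bullet}^{\bullet,n}$.

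For the equivalence I would argue both directions directly. Suppose first that $\delta_x^p$ is a best uniform $d_{\sf K}$-approximation of $\mu$, i.e.\ $p=u_n$ and $d_{\sf K}(\mu,\delta_x^{u_n})$ is minimal among $d_{\sf K}(\mu,\delta_y^{u_n})$, $y\in\Xi_n$. By Theorem~\ref{thwei-K} that minimal value equals ${\sf K}_{\bullet}(u_n)=\frac1{2n}$, while by Theorem~\ref{bestK} the global minimum $d_{\sf K}(\mu,\delta_{\bullet}^{\bullet,n})$ equals ${\sf K}_{\bullet}^{\bullet,n}=\frac1{2n}$; thus $\delta_x^{u_n}$ attains the global minimum and is a best $d_{\sf K}$-approximation. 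Conversely, let $\delta_x^p$ be a best $d_{\sf K}$-approximation of $\mu$. Comparing with $\delta_y^p$ for $y\in\Xi_n$ shows that $\delta_x^p$ is in particular a best $d_{\sf K}$-approximation given $p$, so Theorem~\ref{thwei-K} gives $d_{\sf K}(\mu,\delta_x^p)={\sf K}_{\bullet}(p)=\frac12\max_{j=1}^n p_{,j}$. On the other hand $d_{\sf K}(\mu,\delta_x^p)={\sf K}_{\bullet}^{\bullet,n}=\frac1{2n}$ by Theorem~\ref{bestK}, whence $\max_{j=1}^n p_{,j}=\frac1n$; since $p$ consists of $n$ nonnegative numbers summing to $1$, each now at most $\frac1n$, necessarily $p=u_n$. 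Then $d_{\sf K}(\mu,\delta_x^{u_n})=\frac1{2n}={\sf K}_{\bullet}(u_n)$ is the minimal value of $y\mapsto d_{\sf K}(\mu,\delta_y^{u_n})$, so $\delta_x^{u_n}$ is a best uniform $d_{\sf K}$-approximation (and Corollary~\ref{cont} moreover pins down the admissible $x$ explicitly).

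There is no serious obstacle here; the argument is essentially bookkeeping around the identities $d_{\sf K}(\mu,\delta_{\bullet}^p)={\sf K}_{\bullet}(p)$ and $d_{\sf K}(\mu,\delta_{\bullet}^{\bullet,n})={\sf K}_{\bullet}^{\bullet,n}$. The one place demanding a moment's care is the passage from $\max_j p_{,j}=n^{-1}$ to $p=u_n$, which relies on the identity ${\sf K}_{\bullet}(p)=\frac12\max_j p_{,j}$ being available for \emph{every} $p\in\Pi_n$ when $\mu$ is atomless --- not merely for $p=u_n$ --- together with the trivial fact that $n$ numbers in $[0,n^{-1}]$ with sum $1$ must all equal $n^{-1}$.
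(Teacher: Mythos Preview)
Your proof is correct and is precisely the argument the paper has in mind: the corollary is stated without proof because it follows immediately from Theorems~\ref{thwei-K} and~\ref{bestK} together with the two facts recorded just before them, namely ${\sf K}_{\bullet}(p)=\frac12\max_j p_{,j}$ for $\mu\in\cP_{\sf cts}$ and the computation of ${\sf K}_{\bullet}^{\bullet,n}=\frac1{2n}$ from the explicit form of $S_{F_{\mu},a}$. Your write-up fills in exactly these details, including the pigeonhole step $\max_j p_{,j}=n^{-1}\Rightarrow p=u_n$, which is the only place requiring a moment's thought.
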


\rb\label{rb1}
{\rm (i)} By Theorem~\ref{bestK}, ${\sf K}_{\bullet}^{\bullet,n}=\min_{x\in\Xi_n}{\sf K}^{\bullet}(x)=\min_{p\in\Pi_n}{\sf K}_{\bullet}(p)$.

{\rm (ii)} If $\mu$ has even a single atom, then ${\sf
  K}_{\bullet}^{\bullet,n}$ may be smaller than ${\sf
  K}_{\bullet}(u_n)$, and thus a best uniform $d_{\sf
  K}$-approximation may not be a best $d_{\sf K}$-approximation.
A simple example illustrating this is
$\mu=\frac{3}{4}\delta_0+\frac{1}{4}\lambda\lt|_{[0,1]}\rt.$, where
${\sf K}_{\bullet}^{\bullet,n}= \frac{1}{4}(2n-1)^{-1}$ whereas ${\sf
  K}_{\bullet}(u_n)=\frac1{2} \max \{n,2\}^{-1}$, and hence ${\sf K}^{\bullet,n}_{\bullet}<{\sf K}_{\bullet}(u_n)$ for every $n\ge2$.
\re

For Benford's Law, the best $d_{\sf K}$-approximations are the same as
the best uniform $d_1$-approxi\-ma\-ti\-ons; see also 
Figure 1.

\begin{cor}\label{coKS}
Assume $b>1,$ and $n\in\N.$ Then $\delta_{x_n}^{u_n}$ with
$x_{n,i}=b^{(2j-1)/(2n)}$ for all $j = 1, \ldots , n$ is the unique best
(uniform) $d_{\sf K}$-approximation of $\beta_b.$ Moreover, $d_{\sf K}\lt(\beta_b,\delta_{\bullet}^{\bullet,n}\rt)=\frac{1}{2}n^{-1}$.
\end{cor}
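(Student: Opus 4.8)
The plan is to derive Corollary~\ref{coKS} as a routine specialization of Corollaries~\ref{cont} and \ref{contbest}, the only input specific to $\beta_b$ being that its distribution function is continuous and strictly increasing on $\I=[1,b]$. First I would record that $F_{\beta_b}(x)=\log x/\log b$ is continuous on $\R$, so $\beta_b\in\cP_{\sf cts}$; and since $\beta_b(\{1\})=0$, the definition (\ref{00}) gives $F_{\beta_b}^{-1}(y)=b^y$ for every $y\in[0,1[$. As $y\mapsto b^y$ is continuous, its left-continuous modification $F_{\beta_b-}^{-1}$ coincides with $F_{\beta_b}^{-1}$ on $[0,1[$ as well; note also that $(2j-1)/(2n)\in\, ]0,1[$ for $j=1,\ldots,n$.

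Next I would invoke Corollary~\ref{contbest}: because $\beta_b\in\cP_{\sf cts}$, one has ${\sf K}_{\bullet}^{\bullet,n}={\sf K}_{\bullet}(u_n)=\frac{1}{2}n^{-1}$, hence $d_{\sf K}\lt(\beta_b,\delta_{\bullet}^{\bullet,n}\rt)=\frac{1}{2}n^{-1}$ by Theorem~\ref{bestK}, and moreover every best $d_{\sf K}$-approximation of $\beta_b$ is a best uniform $d_{\sf K}$-approximation (and conversely), so in particular it has weight vector $u_n$. It therefore remains only to pin down the locations. By Corollary~\ref{cont} applied to $\mu=\beta_b$, a measure $\delta_x^{u_n}$ with $x\in\Xi_n$ is a best uniform $d_{\sf K}$-approximation of $\beta_b$ if and only if
$$
F_{\beta_b-}^{-1}\lt(\frac{2j-1}{2n}\rt)\le x_{,j}\le F_{\beta_b}^{-1}\lt(\frac{2j-1}{2n}\rt)\qquad\forall\ j=1,\ldots,n,
$$
and by the first paragraph both the lower and the upper bound equal $b^{(2j-1)/(2n)}$. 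Thus $x_{,j}=b^{(2j-1)/(2n)}$ for every $j$ is forced, so $\delta_{x_n}^{u_n}$ with $x_{n,j}=b^{(2j-1)/(2n)}$ is the unique best uniform $d_{\sf K}$-approximation of $\beta_b$ and, by Corollary~\ref{contbest}, its unique best $d_{\sf K}$-approximation; since $b>1$, the points $b^{(2j-1)/(2n)}$ are pairwise distinct, so $\# \mbox{\rm supp}\,\delta_{\bullet}^{\bullet,n}=n$.

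There is essentially no obstacle here; the corollary is a direct specialization of the general $d_{\sf K}$-theory established above. The one point worth stating with care is that continuity of $F_{\beta_b}$ puts $\beta_b$ into $\cP_{\sf cts}$ (so that Corollary~\ref{contbest} applies, giving the value $\frac{1}{2}n^{-1}$ and forcing uniform weights), while strict monotonicity of $F_{\beta_b}$ makes the two-sided bound of Corollary~\ref{cont} collapse to a single value --- together these reduce both characterizations to the single measure $\delta_{x_n}^{u_n}$.
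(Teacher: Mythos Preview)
Your proposal is correct and follows precisely the approach implied by the paper, which does not give an explicit proof of this corollary but presents it as an immediate consequence of Corollaries~\ref{cont} and~\ref{contbest}; the only $\beta_b$-specific inputs needed are that $F_{\beta_b}$ is continuous (so $\beta_b\in\cP_{\sf cts}$) and that $F_{\beta_b}^{-1}(y)=b^y$ is continuous (so the two-sided bound in Corollary~\ref{cont} collapses to a single point), exactly as you observe.
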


\begin{example}\label{ex58a}
For $\mu = \mbox{\tt Beta}(2,1)$, both $F_{\mu}$ and $F_{\mu}^{-1}$
are continuous. By Corollaries \ref{cont} and \ref{contbest}, the best
(or best uniform) $d_{\sf K}$-approximation of $\mu$ is
$\delta_{x}^{u_n}$, with $x_{,j} = \sqrt{\frac{2j-1}{2n}}$ for $j=1,
\ldots , n$, and $d_{\sf K} (\mu, \delta_{\bullet}^{u_n}) = d_{\sf K}
(\mu, \delta_{\bullet}^{\bullet, n}) = \frac12 n^{-1}$. With Examples
\ref{ex37a}, \ref{ex310a}, and \ref{ex46a}, therefore, the sequences
$\bigl( n d_* (\mu, \delta_{\bullet}^{\bullet, n})\bigr)$ all converge
to a finite, positive limit, and so do $\bigl( n d_* (\mu,
\delta_{\bullet}^{u_n})\bigr)$, provided that $r<2$ in case $* = r$.
\end{example}

\begin{example}\label{ex58b}
Even though the inverse Cantor distribution is discrete with
infinitely many atoms, a best uniform $d_{\sf K}$-approximation
exists, by Theorem \ref{thwei-K}. Utilizing (\ref{eq3}), a tedious but elementary analysis of
$F_{\mu}$ reveals that (\ref{eq36n1}) is valid with $d_{\sf K}$
instead of $d_{\sf L}$. With Examples \ref{ex37b} and \ref{ex47b},
therefore, $\bigl(n d_*(\mu, \delta_{\bullet}^{u_n})\bigr)$ is bounded below and above
by positive constants for $* = {\sf L}, 1 , {\sf K}$, but tends to
$+\infty$ for $* =r>1$.

Very similarly, a best $d_{\sf K}$-approximation exists, by Theorem
\ref{bestK}, and the estimates (\ref{estil}) hold with $d_{\sf K}$
instead of $d_{\sf L}$. Thus, $\bigl( n^{\log 3 / \log
  2}  d_*(\mu, \delta_{\bullet}^{\bullet, n} ) \bigr)$ is bounded below
and above by positive constants for $* = {\sf L}, 1, {\sf K}$, but
tends to $+\infty$ for $* = r>1$.
\end{example}

\section{Conclusion}\label{secC}

As the title of this article suggests, and the introduction explains,
the general results have been motivated by a quantitative analysis of
Benford's Law, and the precise statements regarding the latter are but
simple corollaries of the former. In particular, Sections \ref{secL}
to \ref{secK} show that the quantization coefficients $Q_* = \lim_{n\to
\infty} n d_* (\beta_b, \delta_{\bullet}^{\bullet, n})$ and their
uniform counterparts $Q_{*,u} = \lim_{n\to
\infty} n d_* (\beta_b, \delta_{\bullet}^{u_n})$ all are finite
and positive for each metric $d_*$ considered. Clearly, $Q_* \le Q_{*,u}$ for all $b>1$. Also, note that $\bigl(
 n d_* (\B_b, \delta_{\bullet}^{\bullet, n} )\bigr)$ is
 non-increasing, possibly constant, whereas
$\bigl( n d_* (\B_b, \delta_{\bullet}^{u_n} )\bigr)$ is
non-decreasing. Figure \ref{fig3} summarizes the results obtained earlier.

\begin{figure}[ht]
\psfrag{t11}[]{\small $*$}
\psfrag{t12}[]{\small $Q_*$}
\psfrag{t13}[]{\small $Q_{*,u}$}
\psfrag{t21}[]{\textcolor{red}{\small ${\sf L}$}}
\psfrag{t22}[]{\textcolor{red}{\small $\displaystyle \frac{\max \{b,2\} - 1}{2b-2} \cdot
 \frac{\log (1 + b \log b) - \log (1 + \log b)}{\log b}$}}
\psfrag{t23}[]{\textcolor{red}{\small $\displaystyle \frac{\max \{b,2\} - 1}{2b-2} \cdot  \frac{b\log b}{1+b\log b}$}}
\psfrag{t31}[]{\textcolor{blue}{\small $r\ge 1$}}
\psfrag{t32}[]{\textcolor{blue}{\small $\displaystyle \frac{r+1}{2(b-1)(\log
      b)^{1/r}} \left( \frac{b^{r/(r+1)} -1 }{r} \right)^{1+1/r}$}}
\psfrag{t33}[]{\textcolor{blue}{\small $\displaystyle \frac{(\log b)^{1-1/r}}{2(b-1)} \left( \frac{b^r - 1}{r(r+1)}\right)^{1/r}$}}
\psfrag{t41}[]{\small ${\sf K}$}
\psfrag{t42}[]{\small $\displaystyle \frac12$}
\psfrag{t43}[]{\small $\displaystyle \frac12$}
\begin{center}
\includegraphics[height=5.0cm]{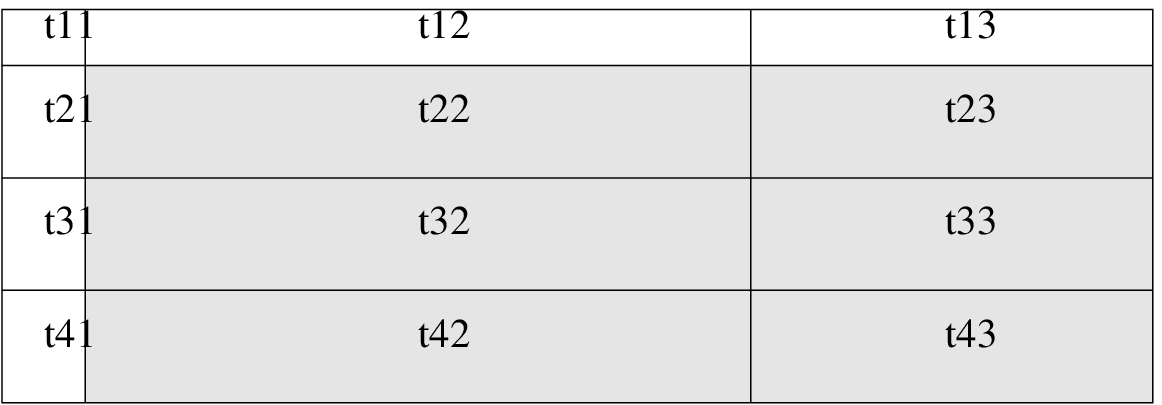}
\end{center}
\caption{The quantization ($Q_*$) and uniform quantization ($Q_{*,u}$) coefficients
  of $\beta_b$ for $d_*$; see also Figure \ref{fig4}.}\label{fig3}
\end{figure}

The dependence of $Q_*$ and $Q_{*,u}$ on $b$ is illustrated in Figure
\ref{fig4}. On the one hand, $Q_{\sf L}$ and $Q_{{\sf L},u}$ tend to
$\frac12$ as $b\downarrow 1$, but also as $b \to +\infty$, both
attaining their respective minimal value for $b=2$. On the other hand,
$Q_r$ and $Q_{r,u}$ both tend to $\frac12 (r+1)^{-1/r}$ as
$b\downarrow 1$, whereas $\lim_{b\to +\infty} (\log b)^{1/r} Q_r =
\frac12 (r+1) r^{-(r+1)/r}$ and $\lim_{b\to +\infty} (\log b)^{1/r -
  1} Q_{r,u} = \frac12 r^{-1/r} (r+1)^{-1/r}$. Finally, $Q_{\sf K} =
Q_{{\sf K},u} = \frac12$ for all $b$.

\begin{figure}[!ht]
\psfrag{tb}[]{\small  $b$}
\psfrag{tq1}[l]{\textcolor{blue}{\small $Q_1$}}
\psfrag{tq2}[l]{\textcolor{cyan}{\small $Q_2$}}
\psfrag{tq1u}[l]{\textcolor{blue}{\small $Q_{1,u}$}}
\psfrag{tq2u}[l]{\textcolor{cyan}{\small $Q_{2,u}$}}
\psfrag{tleg}[l]{\small $Q_{\sf K} = Q_{{\sf K},u}$}
\psfrag{tql}[l]{\textcolor{red}{\small $Q_{\sf L}$}}
\psfrag{tqlu}[l]{\textcolor{red}{\small $Q_{{\sf L},u}$}}
\psfrag{th1}[]{\small $1$}
\psfrag{th10}[]{\small $10$}
\psfrag{th100}[]{\small $100$}
\psfrag{th1000}[]{\small $1000$}
\psfrag{tv0}[]{\small $0$}
\psfrag{tv14}[]{\small $\frac14$}
\psfrag{tv12}[]{\small $\frac12$}
\begin{center}
\includegraphics[height=5.8cm]{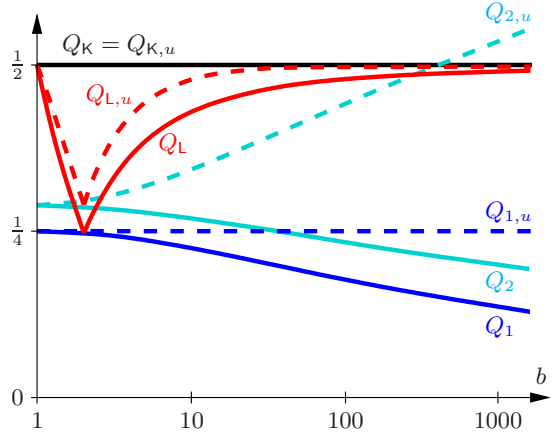}
\end{center}
\caption{Comparing the quantization coefficients $Q_{*}$
  (solid curves) and uniform quantization coefficients $Q_{*,u}$ (broken curves)
  of $\beta_b$, for $\ast = {\sf L}$ (red), $\ast =1,2$ (blue),
  and $\ast = {\sf K}$ (black), respectively; see also Figure \ref{fig3}.}\label{fig4}
\end{figure}

\begin{rem}
In the context of Benford's Law, $\I = [1,b]$, and since $S_b < b$
always, it may seem more natural to study the approximation problem not on all of
$\cP$, but rather on the (dense) subset $\widetilde{\cP}:= \bigl\{\mu \in
\cP: \mu (\{b\}) = 0\bigr\}$. Clearly, $d_{\sf L}$ and $d_r$ both
metrize the weak topology on $\widetilde{\cP}$ but are not
complete. (By contrast, $d_{\sf K}$ is complete but not separable, and
induces a finer topology.) Since $\widetilde{\cP}$ is a
$G_{\delta}$-set in $\cP$, a classical theorem \cite[Thm.2.5.4]{D}
yields, for instance,
$$
\widetilde{d}(\mu, \nu) = \int_0^1 \lt|G_{\mu} - G_{\nu}\rt| +
\sum\nolimits_{k=1}^{\infty}  \frac{2^{-k} \left| \int_{1-k^{-1}}^1
    \lt(G_{\mu} - G_{\nu} \rt) \right|}{
\int_{1-k^{-1}}^1  G_{\mu}   \int_{1-k^{-1}}^1 G_{\nu}
+ \left| \int_{1-k^{-1}}^1
    \lt(G_{\mu} - G_{\nu}  \rt)\right|} ,
$$
with $G_{\mu} = b - F_{\mu}^{-1}$, $G_{\nu} = b - F_{\nu}^{-1}$,
as an equivalent complete, separable metric on
$\widetilde{\cP}$. However, $\widetilde{d}$ appears to be quite unwieldy,
and the authors do not know of an equivalent {\em complete\/} metric
on $\widetilde{\cP}$ for which explicit results similar to those in
Sections \ref{secL} and \ref{secr} could be established.

Also, it is readily confirmed that, given any $\mu \in
\widetilde{\cP}$, there exists a best (or best uniform)
$d_{*}$-approximation $\delta_{\bullet}^{\bullet, n}\in
\widetilde{\cP}$ (or $\delta_{\bullet}^{u_n}\in
\widetilde{\cP}$), i.e., these approximation problems always have a
solution in $\bigl(\widetilde{\cP}, d_*\bigr)$, notwithstanding the fact that
the latter space is not complete (if $* = {\sf L}, r$) or not
separable (if $* = {\sf K}$).
\end{rem}

For Benford's Law, as seen above, all best (or best uniform)
approximations considered converge at the same rate, namely
$(n^{-1})$; the same is true for the $\mbox{\tt Beta}(2,1)$
distribution whenever $1\le r < 2$. These are not coincidences. Rather, for many other
probability metrics $n^{-1}$ turns out to yield the correct order of magnitude
of the $n$-th quantization error as well. Specifically,
consider a metric $d$ on $\cP$ for which
\begin{eqnarray}\label{eq61}
a_1 \| F_{\mu}^{s_1} - F_{\nu}^{s_1} \|_1 & \le &  d(\mu, \nu) \\
& \le & a_2
\left( 
\epsilon  \|F_{\mu}^{s_2} - F_{\nu}^{s_2}\|_{\infty} +  (1-\epsilon) \| F_{\mu}^{-1} -
F_{\nu}^{-1}\|_{\infty}  \right) \quad \forall \mu, \nu \in \cP \, , \nonumber
\end{eqnarray}
with positive constants $a_1, a_2 , s_1, s_2$, and $\epsilon \in \{0, 1 \}$; see, e.g., \cite{BA,R,RKSF} for examples and properties of such metrics. Note that
validity of (\ref{eq61}) causes $d$ to metrize a topology at least as fine as the weak topology,
and clearly (\ref{eq61}) holds for any $d = d_*$. The latter fact, together
with \cite[Thm.6.2]{GL} yields a simple observation regarding the
prevalence of the rate $(n^{-1})$.

\begin{prp}\label{prop66}
Let $d$ be a metric on $\cP$ satisfying {\rm (\ref{eq61})}. Then, for
every $\mu \in \cP$,
$$
\limsup\nolimits_{n\to \infty} n \inf\nolimits_{x\in \Xi_n , p \in
  \Pi_n} d\bigl( \mu, \delta_x^p \bigr) < + \infty \, ,
$$
and if $\mu$ is non-singular (w.r.t.\ $\lambda$) then also
$$
\liminf\nolimits_{n\to \infty} n \inf\nolimits_{x\in \Xi_n , p \in
  \Pi_n} d \bigl( \mu, \delta_x^p\bigr) >0\, .
$$
\end{prp}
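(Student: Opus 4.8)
The plan is to sandwich $\inf_{x\in\Xi_n,\,p\in\Pi_n}d\bigl(\mu,\delta_x^p\bigr)$ between the two sides of \eqref{eq61}, each of which can be controlled directly. For the $\limsup$ assertion I would use the right-hand bound in \eqref{eq61}, so that it suffices to produce, for each $n$, a measure $\delta_x^p$ with $\#\mathrm{supp}\,\delta_x^p\le n$ making the relevant norm on the right of \eqref{eq61} of size $O(n^{-1})$. If $\epsilon=0$ this amounts to approximating the bounded non-decreasing function $F_\mu^{-1}:[0,1[\,\to\I$ uniformly to within $\tfrac12\lambda(\I)n^{-1}$ by a step function taking at most $n$ values: partition $\I$ into $n$ subintervals of equal length $\lambda(\I)/n$, take $x_{,1}\le\cdots\le x_{,n}$ to be their midpoints, and let $0=P_{,0}\le P_{,1}\le\cdots\le P_{,n}=1$ cut off the (interval-valued) preimages of those subintervals under $F_\mu^{-1}$. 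If $\epsilon=1$ I would instead approximate $G:=F_\mu^{s_2}$ --- a non-decreasing function increasing from $0$ to $1$ and constant off $\I$ --- directly, by a non-decreasing right-continuous step function $H$ with at most $n$ jumps (placed in $\I$) and $\|G-H\|_\infty\le\tfrac12 n^{-1}$, obtained by subdividing the range $[0,1]$ into $O(n)$ intervals of length $O(n^{-1})$ and reserving the two outermost values $0$ and $1$ so that the approximant is a genuine power of a distribution function; since $t\mapsto t^{1/s_2}$ is an increasing homeomorphism of $[0,1]$, one has $H^{1/s_2}=F_\nu$ for a unique $\nu$ with $\#\mathrm{supp}\,\nu\le n$, and $\|F_\mu^{s_2}-F_\nu^{s_2}\|_\infty=\|G-H\|_\infty=O(n^{-1})$. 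In either case \eqref{eq61} gives $d(\mu,\delta_x^p)=O(n^{-1})$; only the (always available) compactness of $\I$ enters, so no hypothesis on $\mu$ is needed here.

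For the $\liminf$ assertion, assuming $\mu$ has a non-vanishing absolutely continuous part, I would use the left-hand bound in \eqref{eq61}, reducing matters to the estimate $\inf\{\|F_\mu^{s_1}-F_\nu^{s_1}\|_1:\#\mathrm{supp}\,\nu\le n\}\ge c\,n^{-1}$ for a suitable $c>0$. Let $h$ be the density of the absolutely continuous part of $\mu$; then $B:=\{h\ge\delta\}\cap\I$ has positive Lebesgue measure for some $\delta>0$, and since $B\cap\{F_\mu\in\{0,1\}\}$ is $\lambda$-null one may pick $\tau\in\,]0,\tfrac12[$ with $\rho:=\lambda(B_\tau)>0$, where $B_\tau:=B\cap\{\tau\le F_\mu\le 1-\tau\}$. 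On $B_\tau$ the non-decreasing function $G:=F_\mu^{s_1}$ is a.e.\ differentiable with $G'=s_1F_\mu^{s_1-1}h\ge s_1\min\{\tau^{s_1-1},(1-\tau)^{s_1-1}\}\,\delta=:\sigma>0$, so $G(y)-G(x)\ge\sigma\,\lambda\bigl(B_\tau\cap[x,y]\bigr)$ whenever $x<y$. Now fix $\nu$ with at most $n$ atoms and set $H:=F_\nu^{s_1}$, which is constant on each of at most $n+1$ intervals $J_0,\dots,J_n$ partitioning $\R$; write $\rho_k:=\lambda(B_\tau\cap J_k)$, so $\sum_k\rho_k=\rho$. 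The key elementary step is the per-interval estimate $\inf_{c\in\R}\int_{J_k}|G-c|\,{\rm d}x\ge\tfrac1{16}\sigma\rho_k^2$: choosing points $x_1\le x_2$ in $J_k$ at which the continuous non-decreasing map $x\mapsto\lambda\bigl(B_\tau\cap J_k\cap(-\infty,x]\bigr)$ attains $\tfrac14\rho_k$ and $\tfrac34\rho_k$, one gets $G(x_2)-G(x_1)\ge\tfrac12\sigma\rho_k$, and for any $c$ at least one of the two pieces $B_\tau\cap J_k\cap(-\infty,x_1]$ and $B_\tau\cap J_k\cap[x_2,\infty)$ (each of measure $\tfrac14\rho_k$) contributes pointwise at least $\tfrac14\sigma\rho_k$ to $|G-c|$. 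Summing over $k$ and applying the power-mean inequality $\sum_k\rho_k^2\ge\rho^2/(n+1)$ gives $\|G-H\|_1\ge\sigma\rho^2/(16(n+1))$, whence $\inf_{x\in\Xi_n,p\in\Pi_n}d(\mu,\delta_x^p)\ge a_1\sigma\rho^2/(16(n+1))$ and the claim follows. (Having reduced the power away on $B_\tau$, one could alternatively invoke \cite[Thm.6.2]{GL}.)

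The argument is elementary; the two points that need care --- and that I would flag as the \emph{hard part} --- both concern the interplay between the exponents $s_1,s_2$ and the possibly very irregular structure of $\mu$. First, the powers cannot simply be absorbed into the $d_1$- and $d_{\sf K}$-theory already developed: for $s_2<1$ the map $t\mapsto t^{s_2}$ is only H\"older near $0$, so the naive recipe of approximating $F_\mu$ first and then raising to the power loses the rate, and one must approximate $F_\mu^{s_2}$ itself; dually, in the lower bound one has to localise to the region $\{\tau\le F_\mu\le 1-\tau\}$, where $t\mapsto t^{s_1}$ is bi-Lipschitz, before the absolutely continuous density can be brought to bear. Second, because $\mu$ may carry atoms and a singular continuous component, $F_\mu$ and $F_\mu^{-1}$ need not be continuous at all, so no global modulus of continuity is available; the lower bound must extract quantitative steepness purely from the absolutely continuous part, which is precisely why the crucial increment bound is phrased in terms of $\lambda\bigl(B_\tau\cap[x,y]\bigr)$ rather than $y-x$, and why the per-interval estimate is taken over quarters of $B_\tau\cap J_k$ by Lebesgue measure rather than over quarters of $J_k$ itself.
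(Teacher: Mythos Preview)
Your proof is correct. The paper does not give a detailed argument for this proposition; it simply records that the result follows from the fact that \eqref{eq61} holds for each of the metrics $d_*$ already treated in Sections~\ref{secL}--\ref{secK}, together with \cite[Thm.~6.2]{GL} for the lower bound. Your approach is genuinely more direct: for the $\limsup$ part you build the approximants explicitly (partitioning either the range of $F_\mu^{-1}$ or the range of $F_\mu^{s_2}$, according to $\epsilon$), and for the $\liminf$ part you localise to $B_\tau=\{h\ge\delta\}\cap\{\tau\le F_\mu\le1-\tau\}$, turn the power $s_1$ into a uniform lower bound on $G'$, and then run an elementary per-interval estimate plus Cauchy--Schwarz to get $\|F_\mu^{s_1}-F_\nu^{s_1}\|_1\ge c/(n+1)$. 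This makes the argument self-contained and, in particular, makes transparent exactly how the exponents $s_1,s_2$ and a possible singular component of $\mu$ are absorbed --- points the paper's one-line citation leaves implicit. You yourself note at the end that, once the localisation to $B_\tau$ is done, one could alternatively invoke \cite[Thm.~6.2]{GL}; that is precisely the paper's route. The gain of your version is that it avoids importing the Graf--Luschgy machinery; the cost is only a few extra lines.
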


\rb
{\rm (i)} Apart from $d_*$, examples of familiar probability metrics
that satisfy (\ref{eq61}) include the discrepancy distance $\sup_{I
  \subset \R} |\mu (I) - \nu (I)|$ and the $L^r$-distance $\|F_{\mu} -
F_{\nu}\|_r$ between distribution functions \cite{R}. For the important
Prokhorov distance, validity of the right-hand inequality in
(\ref{eq61}) appears to be unknown \cite{GS}, but best
approximations are suspected to converge at the rate $\bigl( n^{-1} \bigr)$
regardless \cite[Sec.4]{GL1}. Also, $(n^{-1})$ is
established in \cite{DV} as the universal rate of convergence for best approximations under Orlicz
norms, which contains $d_r$ as a special case.

{\rm (ii)} In \cite[Sec.4.2]{RKSF}, for any $a\ge 0$, the $a$-L\'evy
distance
$$
d_{{\sf L}_{a}}(\mu,\nu)= \inf \lt\{ y\ge0: F_{\mu} (\cdot-a y) -
y \le F_{\nu} \le F_{\mu} (\cdot+ a y) + y \rt\}
$$
is considered. Every $d_{{\sf L}_a}$ satisfies (\ref{eq61}), and
$d_{{\sf L}_0} = d_{\sf K}$, $d_{{\sf L}_1} = \omega^{-1}
d_{\sf L}$. Usage of $a$-L\'evy distances may
enable a unified treatment of the results in Sections \ref{secL} and \ref{secK}.

{\rm(iii)} Under additional assumptions on $\mu$, the value of $ n\inf_{x\in \Xi_n} d(\mu,
\delta_{x}^{u_n})$ can similarly be bounded above and
below by positive constants \cite[Thm.5.15]{XB}.
\re

\medskip

Finally, it is worth pointing out that, though motivated here by Benford's Law,
compactness of the interval $\I$ was assumed largely for convenience,
and can easily be dispensed with for many of the general results in
this article. For instance, if $\I$ is (closed but) unbounded then
(\ref{eq1}), with $\omega =1$, still yields $d_{\sf L}$ as a complete,
separable metric inducing the weak topology on $\cP$, though the
latter no longer is compact. Clearly, Theorem \ref{thm3e} is valid in
this situation, as (\ref{eq3100}) holds for $f=F_{\mu}$ and any
interval $I \subset \overline{\R}$. Even though (\ref{eq3100}) may
fail for $f=F_{\mu}^{-1}$ when $\mbox{\rm
  supp}\, \mu$ is unbounded, it is readily checked that nevertheless
the conclusions of Proposition \ref{prop3c} remain intact for
$\ell_{F_{\mu}^{-1}, I}$, provided that $I \subset [0,1]$ but $I \ne
\{0\}$ and $I \ne \{ 1 \}$. With $\ell_{F^{-1}_{\mu}, \{0\}}^*
:=\ell_{F^{-1}_{\mu}, \{1\}}^* := 0$, then, Proposition \ref{prop3f}
holds verbatim, and so does Theorem \ref{thm3h}. Analogously, Theorems
\ref{thloc-K}, \ref{thwei-K}, and \ref{bestK} all can be seen to be
correct, with the definition of ${\sf K}_{\bullet}(p)$ understood to assume that
$p_{,1} p_{,n}>0$. By contrast, the classical $L^1$-Kantorovich
distance $d_1 (\mu, \nu) = \|F_{\mu}^{-1} - F_{\nu}^{-1}\|_1$ is
defined only on the (dense) subset $\cP_1 = \left\{\mu \in \cP : \int_{\I} |x| \, {\rm d}\mu
(x ) < + \infty \right\}$ where it metrizes a topology finer than
the weak topology. Still, with $\cP$ replaced by $\cP_1$, Proposition \ref{prop4a} also remains intact; see, e.g.,
\cite[Sec.5]{XB}. Note that the sequence $\bigl( nd_* (\mu,
\delta_{\bullet}^{u_n})\bigr)$ is bounded when $* = {\sf L}, {\sf K}$
because $d_{\sf L} \le d_{\sf K}$, whereas $\bigl( nd_1 (\mu,
\delta_{\bullet}^{\bullet, n})\bigr)$ may decay arbitrarily slowly; see
\cite[Thm.5.32]{XB}. For a simple application of these results to a
probability measure with unbounded support, let $\mu$ be the standard exponential
distribution, i.e., $F_{\mu}(x) = \max \{0, 1 -
e^{-x}\}$. Calculations quite similar to the ones shown earlier for
Benford's Law yield
$$
\lim\nolimits_{n\to\infty}nd_{\sf
  L}\lt(\mu,\delta_{\bullet}^{\bullet,n}\rt)=\frac{\log2}{2} \, , 
\quad \lim\nolimits _{n\to\infty}nd_{\sf
  L}\lt(\mu,\delta_{\bullet}^{u_n}\rt)=\frac{1}{2}\, ,
$$
whereas
$$
\lim\nolimits_{n\to\infty} n d_1\lt(\mu,\delta_{\bullet}^{\bullet, n}\rt)=1  \quad \mbox{but}
\quad
\lim\nolimits_{n\to\infty}\frac{n}{\log n}d_1
(\mu,\delta_{\bullet}^{u_n})=\frac14 \, ,
$$
and clearly $nd_{\sf K} (\mu , \delta_{\bullet}^{\bullet, n}) =
nd_{\sf K} (\mu , \delta_{\bullet}^{u_n})=\frac12$ for all $n$. Even
though $\mu$ has finite moments of all orders, there exist probability
metrics $d$ for which $\bigl( nd (\mu , \delta_{\bullet}^{\bullet,
  n})\bigr)$ is unbounded; see \cite[Ex.5.1(d)]{GL1}.

\subsection*{Acknowledgements}
The first author was partially supported by an {\sc Nserc} Discovery
Grant. Both authors gratefully acknowledge helpful suggestions made by
F.\ Dai, B. Han, T.P.\ Hill, and an anonymous referee.

\end{document}